\documentclass[a4paper,12pt]{amsart}
\usepackage{amsmath}
\usepackage{amssymb}
\usepackage{amsthm,amsxtra}
\usepackage{cite}
\usepackage{enumerate}
\usepackage{enumitem}
\usepackage[mathscr]{eucal}
\usepackage{adjustbox}
\usepackage{float}
\usepackage{graphicx}
\usepackage{mathtools}
\usepackage{url}
\usepackage{cleveref}
\usepackage{tikz-cd}
\usepackage{tkz-graph}
\usepackage{tkz-berge}
\usetikzlibrary{positioning,arrows,shapes.geometric,trees,decorations.text}
\usepackage{geometry}
\newgeometry{vmargin={40mm}, hmargin={30mm,30mm}}

\newtheorem{Def}{Definition}[section]
\newtheorem{Th}[Def]{Theorem}
\newtheorem{Ex}[Def]{Example}
\newtheorem{Lemma}[Def]{Lemma}
\newtheorem{Prop}[Def]{Proposition}
\newtheorem{Cor}[Def]{Corollary}
\newtheorem{Rem}[Def]{Remark}

{}
\newtheorem{Prob}[Def]{Problem}

\DeclareMathOperator{\fin}{Fin}
\DeclareMathOperator{\maxfin}{maxfin}
\DeclareMathOperator{\mach}{MA+\neg CH}

\begin{document}
\title[On a variation of selective separability: S-separability]{On a variation of selective separability: S-separability}

\author[ D. Chandra, N. Alam and D. Roy ]{ Debraj Chandra$^*$, Nur Alam$^\dag$ and Dipika Roy$^*$ }
\newcommand{\acr}{\newline\indent}
\address{\llap{*\,}Department of Mathematics, University of Gour Banga, Malda-732103, West Bengal, India}
\email{debrajchandra1986@gmail.com, roydipika1993@gmail.com}
\address{\llap{\dag\,}Department of Mathematics, Directorate of Open and Distance Learning (DODL), University of Kalyani, Kalyani, Nadia-741235, West Bengal, India}
\email{nurrejwana@gmail.com}

\subjclass{Primary: 54D65; Secondary: 54C35, 54D20, 54D99}

\maketitle

\begin{abstract}
A space $X$ is M-separable (selectively separable) (Scheepers, 1999; Bella et al., 2009) if for every sequence $(Y_n)$ of dense subspaces of $X$ there exists a sequence $(F_n)$ such that for each $n$ $F_n$ is a finite subset of $Y_n$ and $\cup_{n\in \mathbb{N}} F_n$ is dense in $X$. In this paper, we introduce and study a strengthening of M-separability situated between H- and M-separability, which we call S-separability: for every sequence $(Y_n)$ of dense subspaces of $X$ there exists a sequence $(F_n)$ such that for each $n$ $F_n$ is a finite subset of $Y_n$ and for each finite family $\mathcal F$ of nonempty open sets of $X$ some $n$ satisfies $U\cap F_n\neq\emptyset$ for all $U\in \mathcal F$.
\end{abstract}

\noindent{\bf\keywordsname{}:} {Selective separable, M-separable, S-separable, H-separable, L-separable.}

\section{Introduction}
In \cite{coc6}, Scheepers introduced and investigated a selective version of separability called selective separability (also known as M-separability; see \cite{bella09}). This line of research was further developed in \cite{bella08,bella09,barman11}. Certain variations of selective separability was also introduced and studied in \cite{bella09}. Motivated by this, in this paper, we introduce a variation of selective separability, called S-separability. Note that S-separability lies between H-separability and M-separability.

We establish the basic theory of S-separability. It is preserved by dense and open subspaces, and by open continuous and closed irreducible images, but not by arbitrary continuous mappings. Products behave subtly: the product of two S-separable spaces need not be S-separable; nevertheless, if $X$ and $Y$ are countable H-separable spaces and $\pi w(Y)<\mathfrak b$, then $X\times Y$ is S-separable, and more generally, $\prod_{n\in \mathbb{N}}X_n$ is S-separable provided each finite initial product is S-separable. For compact spaces we obtain a complete characterization: a compact $X$ is S-separable if and only if $\pi w(X)=\omega$. Consequences include permanence results for continuous images of separable compact spaces that are scattered or of countable tightness, and an example of a compact S-separable space with a non–S-separable continuous image.

We link S-separability to selection principles in function spaces. For Tychonoff $X$ the following are equivalent: $C_p(X)$ is S-separable; $C_p(X)$ is M-separable; $iw(X)=\omega$ and every finite power of $X$ is Menger. For zero-dimensional $X$ the same equivalence holds for $C_p(X,\mathbb Q)$, $C_p(X,\mathbb Z)$, and $C_p(X,2)$. On the level of small subspaces, the threshold $\mathfrak d$ is sharp: the smallest $\pi$-weight of a countable non-S-separable space is $\mathfrak d$. In particular, every countable subspace of $2^\kappa$ is S-separable for $\kappa<\mathfrak d$, whereas $2^{\mathfrak d}$ contains a countable dense non-S-separable subspace. We also show (in ZFC) that $2^{\omega_1}$ contains a dense and hence a countable dense S-separable subspace. Finally, we define L-separability and compare it with separability, hereditary separability, and M-separability.

Several natural problems remain open, including whether in ZFC there exists a S-separable space that is not H-separable, and whether there exists a M-separable space that is not S-separable.

\section{Preliminaries}
By a space we always mean a topological space. All spaces are assumed to be Tychonoff; otherwise will be mentioned. For undefined notions and terminologies see \cite{Engelking}. $w(X)$ denotes the weight of a space $X$. A collection $\mathcal{B}$ of open sets of $X$ is called $\pi$-base (respectively, $\pi$-base at $x\in X$) if every nonempty open set in $X$ (respectively, every neighbourhood of $x$ in $X$) contains a nonempty member of $\mathcal{B}$. $\pi w(X) = \min \{|\mathcal{B}| : \mathcal{B} \text{ is a } \pi\text{-base for } X\}$ denotes the $\pi$-weight of $X$. The minimum cardinality of a $\pi$-base at $x\in X$ is denoted by $\pi_\chi(x, X)$ and $\pi_\chi(X) = \sup\{\pi_\chi(x, X) : x\in X\}$. $d(X)$ denotes the minimal cardinality of a dense subspace of $X$ and $\delta(X) = \sup \{d(Y) : Y \text{ is dense in }X\}$. For any space $X$, $d(X) \leq \delta(X) \leq \pi w(X) \leq w(X)$. The $i$-weight of a space $(X, \tau)$ is defined as \[iw((X, \tau)) = \min \{\kappa : \text{there is a Tychonoff topology } \tau^\prime \subseteq \tau \text{ such that } w((X, \tau^\prime)) = \kappa\}.\]

A space $X$ has countable fan tightness \cite{arhan92} if for any $x\in X$ and any sequence $(Y_n)$ of subsets of $X$ with $x\in \cap_{n\in \mathbb{N}} \overline{Y_n}$ there exists a sequence $(F_n)$ such that for each $n$ $F_n$ is a finite subset of $Y_n$ and $x \in \overline{\cup_{n\in \mathbb{N}} F_n}$. $X$ has countable fan tightness with respect to dense subspaces \cite{bella09} if for any $x\in X$ and any sequence $(Y_n)$ of dense subspaces of $X$ there exists a sequence $(F_n)$ such that for each $n$ $F_n$ is a finite subset of $Y_n$ and $x \in \overline{\cup_{n\in \mathbb{N}} F_n}$. A space $X$ has countable tightness (which is denoted by $t(X) = \omega$) if for each $x\in X$ and each $Y \subseteq X$ with $x\in \overline{Y}$ there exists a countable set $E \subseteq Y$ such that $x\in \overline{E}$. A space $X$ is scattered if every nonempty subspace $Y$ of $X$ has an isolated point. For a Tychonoff space $X$, $\beta X$ denotes the Stone-\v{C}ech compactification of $X$.

Let $X$ be a Tychonoff space and $C(X)$ be the set of all continuous real valued functions. As usual $C_p(X)$ denotes the space $C(X)$ with pointwise convergence topology. Let $f \in C(X)$. Then a basic open set of $f$ in $C_p(X)$ is of the form \[B(f, F, \epsilon) = \{g\in C(X) : |f(x) - g(x)| < \epsilon\; \forall x\in F\},\] where $F$ is a finite subset of $X$ and $\epsilon > 0$. For a separable metrizable space $X$, $C_p(X)$ is hereditarily separable. Also recall from \cite[Theorem II.2.10]{arhan92} that $C_p(X)$ is Menger if and only if $X$ is finite.

A space $X$ is said to be selective separable (M-separable) \cite{bella08} (see also \cite{bella09}) if for every sequence $(Y_n)$ of dense subspaces of $X$ there exists a sequence $(F_n)$ such that for each $n$ $F_n$ is a finite subset of $Y_n$ and $\cup_{n\in \mathbb{N}} F_n$ is dense in $X$. A space $X$ is said to be H-separable \cite{bella09} if for every sequence $(Y_n)$ of dense subspaces of $X$ there exists a sequence $(F_n)$ such that for each $n$ $F_n$ is a finite subset of $Y_n$ and every nonempty open set of $X$ intersects $F_n$ for all but finitely many $n$.

\section{Main Results}
\subsection{S-separability and basic observations on it}
We begin with the following definition of a class of spaces, also considered by Aurichi et al. \cite{aurichi25}.

\begin{Def}
\label{def1}
A space $X$ is said to be S-separable if for every sequence $(Y_n)$ of dense subspaces of $X$ there exists a sequence $(F_n)$ such that for each $n$ $F_n$ is a finite subset of $Y_n$ and for each finite collection $\mathcal{F}$ of nonempty open sets of $X$ there exists a $n$ such that $U\cap F_n \neq \emptyset$ for all $U\in \mathcal{F}$.
\end{Def}

Clearly every H-separable space is S-separable and every S-separable space is M-separable. In the following example we observe that the class of H-separable spaces is strictly contained in the class of S-separable spaces.
\begin{Ex}
\label{ex3}
There exists a S-separable space which is not H-separable.
\end{Ex}
\begin{proof}
Assume that $\mathfrak{b} < \mathfrak{d}$. By \cite[Theorem 31]{bella09}, there exists a countable dense subspace $Y$ of $2^\mathfrak{b}$ such that $Y$ is not H-separable. Also by Corollary~\ref{cor1}, every countable subspace of $2^\mathfrak{b}$ is S-separable. Thus $Y$ is S-separable.
\end{proof}

The above example is consistent with ZFC. So the following question can be asked.
\begin{Prob}
\label{prob3}
In ZFC, does there exist a S-separable space which is not H-separable?
\end{Prob}

The following question also remains open.
\begin{Prob}
\label{prob4}
Does there exist a M-separable space which is not S-separable?
\end{Prob}

\begin{Prop}
\label{prop3}
Let $X$ be a S-separable space. Then
\begin{enumerate}[wide=0pt,label={\upshape(\arabic*)},leftmargin=*,ref={\theProp(\arabic*)}]
  \item\label{prop301} every dense subspace of $X$ is S-separable.
  \item\label{prop302} every open subspace of $X$ is S-separable.
\end{enumerate}
\end{Prop}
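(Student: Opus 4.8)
The plan is to reduce both statements to the S-separability of the ambient space $X$ by lifting a sequence of dense subspaces up to $X$ and then pushing the witnessing finite sets back down. For (1), let $D$ be dense in $X$ and let $(Y_n)$ be a sequence of dense subspaces of $D$. First I would invoke transitivity of density: since each $Y_n$ is dense in $D$ and $D$ is dense in $X$, each $Y_n$ is dense in $X$. Applying S-separability of $X$ to $(Y_n)$ yields finite sets $F_n \subseteq Y_n \subseteq D$ with the required matching property against finite families of nonempty open subsets of $X$. To finish, given a finite family $\mathcal{G}$ of nonempty open subsets of $D$, write each $G \in \mathcal{G}$ as $G = V_G \cap D$ with $V_G$ open in $X$; the family $\{V_G : G \in \mathcal{G}\}$ consists of nonempty open subsets of $X$, so some $n$ satisfies $V_G \cap F_n \neq \emptyset$ for all $G$, and since $F_n \subseteq D$ this gives $G \cap F_n = V_G \cap F_n \neq \emptyset$. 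This part is routine.

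For (2) the subtlety is that a dense subspace of an open set $U \subseteq X$ need not be dense in $X$, so the direct lift used in (1) fails. My plan is to repair this by adjoining a fixed open ``tail.'' Let $(Y_n)$ be dense subspaces of $U$ and set $E = X \setminus \overline{U}$, an open subset of $X$. I would define $Z_n = Y_n \cup E$ and check that each $Z_n$ is dense in $X$: indeed $\overline{Z_n} = \overline{Y_n} \cup \overline{E} \supseteq \overline{U} \cup (X \setminus \overline{U}) = X$, using that $Y_n$ dense in $U$ forces $\overline{Y_n} \supseteq \overline{U}$ and that $\overline{E} \supseteq E = X \setminus \overline{U}$. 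Applying S-separability of $X$ to $(Z_n)$ produces finite sets $F_n \subseteq Z_n$ with the universal matching property, and I would set $F_n' = F_n \cap U$. Since $E \cap U = \emptyset$, each point of $F_n'$ lies in $Y_n$, so $F_n'$ is a finite subset of $Y_n$, as required.

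The verification that $(F_n')$ witnesses S-separability of $U$ is where the key point lies. Because $U$ is open in $X$, every nonempty open subset of $U$ is already a nonempty open subset of $X$; hence any finite family $\mathcal{G}$ of nonempty open subsets of $U$ is such a family in $X$, and the universal property of $(F_n)$ supplies an $n$ with $G \cap F_n \neq \emptyset$ for all $G \in \mathcal{G}$. Finally, since $G \subseteq U$, any witnessing point of $G \cap F_n$ already lies in $F_n \cap U = F_n'$, so $G \cap F_n' \neq \emptyset$ for all $G \in \mathcal{G}$. The main obstacle is thus concentrated in part (2): engineering the enlargement $Z_n = Y_n \cup (X \setminus \overline{U})$ so that it is dense in $X$ while its intersection with $U$ recovers a subset of $Y_n$. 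Once this set-up is in place, the openness of $U$ makes the down-projection of finite open families transparent, and the single witnessing sequence $(F_n')$ works uniformly against all such families.
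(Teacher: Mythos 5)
Your proposal is correct and follows essentially the same route as the paper: part (1) via transitivity of density, and part (2) by enlarging each $Y_n$ to a dense subspace of $X$ using a tail disjoint from the open set, then cutting the witnessing finite sets back down, with openness guaranteeing that relative open families are open in $X$. The only (immaterial) difference is that you adjoin $X\setminus\overline{U}$ where the paper adjoins $X\setminus Y$; both enlargements are dense in $X$ and meet $U$ only inside $Y_n$, so the arguments coincide.
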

\begin{proof}
$(1)$. Let $Y$ be a dense subspace of $X$. Pick a sequence $(Y_n)$ of dense subspaces of $Y$. Obviously each $Y_n$ is dense in $X$ because $Y$ is dense in $X$. Since $X$ is S-separable, there exists a sequence $(F_n)$ such that for each $n$ $F_n$ is a finite subset of $Y_n$ and for each finite collection $\mathcal{F}$ of nonempty open sets of $X$ there exists a $n\in \mathbb{N}$ such that $U\cap F_n \neq \emptyset$ for all $U\in \mathcal{F}$. Observe that the sequence $(F_n)$ witnesses for the sequence $(Y_n)$ that $Y$ is S-separable.

$(2)$. Let $Y$ be an open subspace of $X$. Pick a sequence $(Y_n)$ of dense subspaces of $Y$. Obviously for each $n$ $Z_n = (X\setminus Y) \cup Y_n$ is dense in $X$. Since $X$ is S-separable, there exists a sequence $(F_n)$ such that for each $n$ $F_n$ is a finite subset of $Z_n$ and for each finite collection $\mathcal{F}$ of nonempty open sets of $X$ there exists a $n\in \mathbb{N}$ such that $U\cap F_n \neq \emptyset$ for all $U\in \mathcal{F}$. Now we can express each $F_n$ as $F_n = A_n \cup B_n$, where $A_n$ is a finite subset of $X\setminus Y$ and $B_n$ is a finite subset of $Y_n$. Observe that the sequence $(B_n)$ witnesses for $(Y_n)$ that $Y$ is S-separable. Let $\mathcal{F}$ be a finite collection of nonempty open sets of $Y$. Since $Y$ is open in $X$, $\mathcal{F}$ is also a finite collection of nonempty open sets of $X$. Then we get a $m \in \mathbb{N}$ such that $U\cap F_m \neq \emptyset$ for all $U\in \mathcal{F}$. Since $U\subseteq Y$ and $A_m \subseteq X\setminus Y$, $U\cap B_m \neq \emptyset$. Thus $Y$ is S-separable.
\end{proof}

\begin{Prop}
\label{prop2}
For a space $X$ the following assertions are equivalent.
\begin{enumerate}[wide=0pt,label={\upshape(\arabic*)},leftmargin=*]
  \item $X$ is hereditarily S-separable.
  \item $X$ is hereditarily separable and every countable subspace of $X$ is S-separable.
\end{enumerate}
\end{Prop}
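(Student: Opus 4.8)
The plan is to prove the two implications separately, with $(1)\Rightarrow(2)$ being essentially immediate and $(2)\Rightarrow(1)$ carrying all the content. For $(1)\Rightarrow(2)$ I would first record the elementary fact that every S-separable space is separable: feeding the constant sequence $Y_n=X$ into the definition produces finite sets $F_n$, and applying the conclusion to singleton families $\mathcal F=\{U\}$ shows that every nonempty open $U$ meets some $F_n$, so $\bigcup_{n\in\mathbb N}F_n$ is a countable dense set. Granting this, if $X$ is hereditarily S-separable then every subspace of $X$ is S-separable and hence separable, so $X$ is hereditarily separable; and every countable subspace, being a subspace, is S-separable. This yields $(2)$ with no further work.

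For $(2)\Rightarrow(1)$ the strategy is to reduce an arbitrary subspace to a single countable dense subspace to which the hypothesis directly applies. Fix a subspace $Y\subseteq X$; since $X$ is hereditarily separable, so is $Y$. Let $(D_n)$ be a sequence of dense subspaces of $Y$. The crucial step is to manufacture a countable dense $Z\subseteq Y$ with $Z\cap D_n$ dense in $Z$ for every $n$. Here I would exploit hereditary separability of $Y$: each $D_n$, being a subspace of $Y$, is separable, so it contains a countable subset $E_n$ dense in $D_n$, and since $D_n$ is dense in $Y$ this $E_n$ is in fact dense in $Y$. Put $Z=\bigcup_{n\in\mathbb N}E_n$. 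Then $Z$ is countable and dense in $Y$, and since $E_n\subseteq Z\cap D_n$ with $E_n\subseteq Z\subseteq Y$ and $E_n$ dense in $Y$, each $E_n$ is dense in $Z$, so each $Z\cap D_n$ is dense in $Z$.

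Now $Z$ is a countable subspace of $X$, hence S-separable by hypothesis. Applying S-separability of $Z$ to the sequence $(Z\cap D_n)$ of dense subspaces of $Z$ yields finite sets $F_n\subseteq Z\cap D_n\subseteq D_n$ such that for every finite family of nonempty open subsets of $Z$ some $F_n$ meets each member. The final step transfers this from $Z$ to $Y$: given a finite family $\mathcal F=\{U_1,\dots,U_k\}$ of nonempty open subsets of $Y$, density of $Z$ makes $\{U_1\cap Z,\dots,U_k\cap Z\}$ a finite family of nonempty open subsets of $Z$, so some $n$ satisfies $(U_i\cap Z)\cap F_n\neq\emptyset$ for all $i$; since $F_n\subseteq Z$ this is precisely $U_i\cap F_n\neq\emptyset$ for all $i$. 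Hence $(F_n)$ witnesses S-separability of $Y$ for the sequence $(D_n)$, and $Y$ is S-separable.

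I expect the only genuine obstacle to be the construction of $Z$ with $Z\cap D_n$ dense in $Z$: a naive countable dense subset of $Y$ may be disjoint from a given $D_n$, and $Y$ need not have a countable $\pi$-base, so one cannot simply close off against a base. Hereditary separability is exactly what sidesteps this difficulty, since it lets each $D_n$ contribute its own countable dense-in-$Y$ skeleton $E_n$; after that, both the density verification and the passage from open subsets of $Z$ to open subsets of $Y$ are routine bookkeeping.
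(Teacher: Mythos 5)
Your proposal is correct and follows essentially the same route as the paper: for $(2)\Rightarrow(1)$ the paper likewise uses hereditary separability to pick a countable dense subset of each given dense subspace, unions them into a single countable set $Z$, applies S-separability of $Z$, and transfers the witnessing finite sets back to $Y$ via density of $Z$ in $Y$. The paper omits the easy direction $(1)\Rightarrow(2)$, which you verify correctly.
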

\begin{proof}
$(2)\Rightarrow (1)$. Let $Y$ be a subspace of $X$. Pick a sequence $(Y_n)$ of dense subspaces of $Y$. Since $X$ is hereditarily separable, we get a sequence $(Z_n)$ such that for each $n$ $Z_n$ is a countable dense subspace of $Y_n$. Choose $Z= \cup_{n\in \mathbb{N}} Z_n$. Clearly for each $n$ $Z_n$ is dense in $Z$ and $Z$ is S-separable as every countable subspace of $X$ is S-separable. Apply the S-separability of $Z$ to $(Z_n)$ to obtain a sequence $(F_n)$ such that for each $n$ $F_n$ is a finite subset of $Z_n$ and for every finite collection $\mathcal{F}$ of nonempty open sets of $Z$ there exists a $n\in \mathbb{N}$ such that $U\cap F_n\neq \emptyset$ for all $U\in \mathcal{F}$. We claim that the sequence $(F_n)$ guarantees for $(Y_n)$ that $Y$ is S-separable. Let $\mathcal{G}$ be a finite collection of nonempty open sets of $Y$. Then $\mathcal{F} = \{U\cap Z : U\in \mathcal{G}\}$ is a finite collection of nonempty open sets of $Z$ as $Z$ is dense in $Y$. This gives a $k\in \mathbb{N}$ such that $V\cap F_k\neq \emptyset$ for all $V\in \mathcal{F}$, i.e. $U\cap F_k\neq \emptyset$ for all $U\in \mathcal{G}$. Thus $Y$ is S-separable.
\end{proof}

\begin{Prop}
\label{prop5}
If a space $X$ has an open dense S-separable subspace, then $X$ is S-separable.
\end{Prop}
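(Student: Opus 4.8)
The plan is to let $Y$ denote the open dense S-separable subspace of $X$ and to transfer a witnessing sequence for $Y$ directly into a witnessing sequence for $X$. Given an arbitrary sequence $(D_n)$ of dense subspaces of $X$, the first step is to pass to the traces on $Y$: I would show that $(D_n\cap Y)$ is a sequence of dense subspaces of $Y$. This is where openness of $Y$ is used---for a nonempty open set $V$ of $Y$, openness of $Y$ in $X$ makes $V$ open in $X$, so density of $D_n$ in $X$ forces $V\cap D_n\neq\emptyset$, and this intersection lies in $Y$; hence $D_n\cap Y$ meets every nonempty open subset of $Y$.

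Next I would invoke the S-separability of $Y$ applied to the sequence $(D_n\cap Y)$ to obtain finite sets $F_n\subseteq D_n\cap Y$ such that for every finite collection $\mathcal{G}$ of nonempty open sets of $Y$ some $n$ satisfies $V\cap F_n\neq\emptyset$ for all $V\in\mathcal{G}$. Since $F_n\subseteq D_n\cap Y\subseteq D_n$, each $F_n$ is already an admissible finite subset of $D_n$ for testing S-separability of $X$; it remains only to verify the open-set condition relative to $X$.

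For the final step, let $\mathcal{F}$ be a finite collection of nonempty open sets of $X$. Because $Y$ is dense in $X$, each $U\in\mathcal{F}$ meets $Y$, so $\mathcal{G}=\{U\cap Y:U\in\mathcal{F}\}$ is a finite collection of nonempty open sets of $Y$. Applying the witness property of $(F_n)$ to $\mathcal{G}$ yields an $n$ with $(U\cap Y)\cap F_n\neq\emptyset$ for all $U\in\mathcal{F}$; since $F_n\subseteq Y$ we have $(U\cap Y)\cap F_n=U\cap F_n$, so $U\cap F_n\neq\emptyset$ for every $U\in\mathcal{F}$. Thus $(F_n)$ witnesses S-separability of $X$.

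I do not anticipate a genuine obstacle here: the argument is a clean ``trace and transfer'' routine, dual in spirit to the open-subspace half of Proposition~\ref{prop3}. The only point requiring care is the combined use of density and openness---density of $Y$ guarantees that restricting the test family $\mathcal{F}$ to $Y$ loses no information (every nonempty open set of $X$ survives as a nonempty open set of $Y$), while the containment $F_n\subseteq Y$ guarantees that intersecting the open sets with $Y$ does not spoil the witnessing condition.
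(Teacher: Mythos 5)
Your proof is correct and follows essentially the same route as the paper's: trace the dense sets onto $Y$, apply S-separability of $Y$, and transfer the witnessing sequence back to $X$ via $\mathcal{G}=\{U\cap Y: U\in\mathcal{F}\}$. You are in fact slightly more careful than the paper, which asserts without comment that $Y_n\cap Y$ is dense in $Y$, whereas you correctly isolate that this is exactly where openness of $Y$ is needed.
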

\begin{proof}
Let $Y$ be an open dense S-separable subspace of $X$. Pick a sequence $(Y_n)$ of dense subspaces of $X$. For each $n$ $Z_n = Y_n\cap Y$ is dense in $Y$. Since $Y$ is S-separable, there exists a sequence $(F_n)$ such that for each $n$ $F_n$ is a finite subset of $Z_n$ and for every finite collection $\mathcal{F}$ of nonempty open sets of $Y$ there exists a $n\in \mathbb{N}$ such that $U\cap F_n \neq \emptyset$ for all $U\in \mathcal{F}$. Observe that $(F_n)$ witnesses for $(Y_n)$ that $X$ is S-separable. Let $\mathcal{F}$ be a finite collection of nonempty open sets of $X$. Define a finite collection $\mathcal{G}$ of nonempty open sets of $Y$ as $\mathcal{G} = \{U\cap Y : U\in \mathcal{F}\}$. Then there exists a $m\in \mathbb{N}$ such that $V\cap F_m \neq \emptyset$ for all $V\in \mathcal{G}$, i.e. $U\cap Y\cap F_m \neq \emptyset$ for all $U\in \mathcal{F}$. Thus $U\cap F_m \neq \emptyset$ for all $U\in \mathcal{F}$ and hence $X$ is S-separable.
\end{proof}

Recall that a continuous mapping $f: X \to Y$ is irreducible if the only closed subset $C$ of $X$ satisfying $f(C) = Y$ is $C = X$.
\begin{Prop}
\label{prop4}
Let $X$ be a S-separable space. Then
\begin{enumerate}[wide=0pt,label={\upshape(\arabic*)},leftmargin=*,ref={\theProp(\arabic*)}]
\item\label{prop401} every open continuous image of $X$ is S-separable.
\item\label{prop402} every closed irreducible image of $X$ is S-separable.
\end{enumerate}
\end{Prop}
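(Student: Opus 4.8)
The plan is to treat both parts with a single skeleton and to isolate the one place where the two hypotheses actually differ. Write $f\colon X\to Y$ for the map in question; in both cases it is a continuous surjection onto $Y$. Given a sequence $(D_n)$ of dense subspaces of $Y$, the first step is to \emph{pull back density}, i.e. to show that each $f^{-1}(D_n)$ is dense in $X$. Granting this, I apply the S-separability of $X$ to the sequence $(f^{-1}(D_n))$ to obtain finite sets $E_n\subseteq f^{-1}(D_n)$ with the selection property, and then set $F_n=f(E_n)$. Since $E_n\subseteq f^{-1}(D_n)$ we have $F_n=f(E_n)\subseteq D_n$, and $F_n$ is finite; I claim $(F_n)$ witnesses S-separability of $Y$.

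The verification of the selection property is uniform across both parts and uses only continuity and surjectivity. Given a finite family $\mathcal F$ of nonempty open sets of $Y$, put $\mathcal G=\{f^{-1}(V):V\in\mathcal F\}$; each $f^{-1}(V)$ is open by continuity and nonempty by surjectivity, so $\mathcal G$ is a finite family of nonempty open sets of $X$. The selection property of $(E_n)$ then yields an $n$ with $f^{-1}(V)\cap E_n\neq\emptyset$ for all $V\in\mathcal F$, and picking $x\in f^{-1}(V)\cap E_n$ gives $f(x)\in V\cap F_n$, so $V\cap F_n\neq\emptyset$ for all $V\in\mathcal F$. This is exactly the condition required of $(F_n)$.

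Thus the only step distinguishing the two parts is the density of $f^{-1}(D_n)$, and this is where the real content lies. For part $(1)$, if $f$ is open then for any nonempty open $U\subseteq X$ the image $f(U)$ is nonempty open, hence meets the dense set $D_n$; choosing $x\in U$ with $f(x)\in f(U)\cap D_n$ shows $f^{-1}(D_n)\cap U\neq\emptyset$. For part $(2)$ openness is unavailable, so I will use the small-image characterization of irreducibility: for a closed surjection $f$, irreducibility is equivalent to $f^{\#}(U):=Y\setminus f(X\setminus U)$ being nonempty for every nonempty open $U\subseteq X$. Since $f$ is closed, $f(X\setminus U)$ is closed, so $f^{\#}(U)$ is \emph{open}, and irreducibility makes it nonempty. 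Then $D_n$ meets $f^{\#}(U)$, and any $y\in D_n\cap f^{\#}(U)$ satisfies $\emptyset\neq f^{-1}(y)\subseteq U$; a preimage of $y$ therefore lands in $f^{-1}(D_n)\cap U$, giving density.

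I expect the main obstacle to be precisely this closed-irreducible density pullback: one must notice that it is \emph{closedness} that upgrades the small image $f^{\#}(U)$ from merely nonempty to open, which is exactly what allows the density of $D_n$ to be invoked, while \emph{irreducibility} is what guarantees $f^{\#}(U)\neq\emptyset$. Once the density of $f^{-1}(D_n)$ is secured in each case, the remainder is the uniform push-forward argument above.
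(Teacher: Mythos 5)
Your proposal is correct and follows essentially the same route as the paper: pull the dense sets back along $f$, apply S-separability of $X$, push the finite selections forward, and verify the selection property via preimages of the finite open family. The only difference is that the paper simply cites the fact that dense subspaces are inverse invariant under open (respectively, closed irreducible) maps, whereas you prove it explicitly — your argument for the closed irreducible case via the small image $f^{\#}(U)=Y\setminus f(X\setminus U)$ being open (by closedness) and nonempty (by irreducibility) is exactly the standard justification of that cited fact, and it is correct.
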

\begin{proof}
$(1)$. Let $f: X \to Y$ be an open continuous mapping from $X$ onto a space $Y$. Let $(Y_n)$ be a sequence of dense subspaces of $Y$. Now $(f^{-1}(Y_n))$ is a sequence of dense subspaces of $X$ as dense subspaces are inverse invariant under open mappings. Since $X$ is S-separable, there exists a sequence $(F_n)$ such that for each $n$ $F_n$ is a finite subset of $f^{-1}(Y_n)$ and for every finite collection $\mathcal{F}$ of nonempty open sets of $X$ there exists a $n\in \mathbb{N}$ such that $U\cap F_n \neq \emptyset$ for all $U \in \mathcal{F}$. For each $n$ choose $A_n = f(F_n)$. Observe that the sequence $(A_n)$ witnesses for $(Y_n)$ that $Y$ is S-separable. Let $\mathcal{F}$ be a finite collection of nonempty open sets of $Y$. Then $f^{-1}(\mathcal{F}) = \{f^{-1}(V) : V\in \mathcal{F}\}$ is a finite collection of nonempty open sets of $X$. This gives a $m\in \mathbb{N}$ such that $f^{-1}(V) \cap F_m \neq \emptyset$ for all $V\in \mathcal{F}$. Clearly $V\cap A_m \neq \emptyset$ for all $V\in \mathcal{F}$. Thus $Y$ is S-separable.

$(2)$. Let $f: X \to Y$ be a closed irreducible mapping from $X$ onto a space $Y$. Let $(Y_n)$ be a sequence of dense subspaces of $Y$. Now $(f^{-1}(Y_n))$ is a sequence of dense subspaces of $X$ as dense subspaces are inverse invariant under closed irreducible mappings. Since $X$ is S-separable, there exists a sequence $(F_n)$ such that for each $n$ $F_n$ is a finite subset of $f^{-1}(Y_n)$ and for every finite collection $\mathcal{F}$ of nonempty open sets of $X$ there exists a $n\in \mathbb{N}$ such that $U\cap F_n \neq \emptyset$ for all $U \in \mathcal{F}$. For each $n$ choose $A_n = f(F_n)$. Observe that the sequence $(A_n)$ witnesses for $(Y_n)$ that $Y$ is S-separable. Let $\mathcal{F}$ be a finite collection of nonempty open sets of $Y$. Then $f^{-1}(\mathcal{F}) = \{f^{-1}(V) : V\in \mathcal{F}\}$ is a finite collection of nonempty open sets of $X$. This gives a $m\in \mathbb{N}$ such that $f^{-1}(V) \cap F_m \neq \emptyset$ for all $V\in \mathcal{F}$. Clearly $V\cap A_m \neq \emptyset$ for all $V\in \mathcal{F}$. Thus $Y$ is S-separable.
\end{proof}

\begin{Th}
\label{thm12}
If $X$ is a countable H-separable space and $Y$ is a countable space with $\pi w(Y) < \mathfrak{b}$, then $X\times Y$ is H-separable.
\end{Th}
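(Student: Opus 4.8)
The plan is to exhibit, for an arbitrary sequence $(D_n)$ of dense subspaces of $X\times Y$, a single sequence $(F_n)$ of finite subsets of $D_n$ witnessing H-separability, where each $F_n$ is taken to be the trace of $D_n$ on a growing \emph{finite grid of points}, and where the growth rate of the grid is governed by a function dominating $\pi w(Y)$-many index functions. First I would fix a $\pi$-base $\{V_\alpha:\alpha<\kappa\}$ of $Y$ with $\kappa=\pi w(Y)<\mathfrak{b}$, together with enumerations $X=\{x_j:j\in\mathbb{N}\}$ and $Y=\{y_k:k\in\mathbb{N}\}$. Since every nonempty open subset of $X\times Y$ contains a set of the form $U\times V_\alpha$ with $U$ open in $X$, it suffices to arrange that for every open $U\subseteq X$ and every $\alpha<\kappa$ one has $F_n\cap(U\times V_\alpha)\neq\emptyset$ for all but finitely many $n$; this is the H-separability meeting condition evaluated on a $\pi$-base of the product.

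Next I would extract the $X$-direction data slice by slice. For each $\alpha$ and each $n$ the projection $E^\alpha_n=\pi_X\!\big(D_n\cap(X\times V_\alpha)\big)$ is dense in $X$: if $U$ is nonempty open in $X$ then $U\times V_\alpha$ is nonempty open, so $D_n$ meets it, placing a point of $U$ in $E^\alpha_n$. Applying the H-separability of $X$ to the sequence $(E^\alpha_n)_n$ produces finite sets $G^\alpha_n\subseteq E^\alpha_n$ such that every open $U\subseteq X$ satisfies $U\cap G^\alpha_n\neq\emptyset$ for all but finitely many $n$. Lifting each point of $G^\alpha_n$ back to a witness in $D_n\cap(X\times V_\alpha)$ yields finite $H^\alpha_n\subseteq D_n$ with $\pi_X(H^\alpha_n)=G^\alpha_n$ and $\pi_Y(H^\alpha_n)\subseteq V_\alpha$. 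I then record the largest point-indices used, $\mu^\alpha(n)=\max\{j:x_j\in G^\alpha_n\}$ and $\nu^\alpha(n)=\max\{k:y_k\in\pi_Y(H^\alpha_n)\}$, and set $f^\alpha=\max(\mu^\alpha,\nu^\alpha)\in\mathbb{N}^{\mathbb{N}}$.

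This is where $\mathfrak{b}$ enters. The family $\{f^\alpha:\alpha<\kappa\}$ has size $\kappa<\mathfrak{b}$, hence is $\le^\ast$-bounded by a single increasing $g\in\mathbb{N}^{\mathbb{N}}$, i.e.\ $f^\alpha\le^\ast g$ for every $\alpha$. I would then define $F_n=D_n\cap\big(\{x_j:j\le g(n)\}\times\{y_k:k\le g(n)\}\big)$, which is finite and contained in $D_n$. For a fixed $\alpha$, whenever $f^\alpha(n)\le g(n)$ one has $H^\alpha_n$ entirely inside the grid; and for fixed open $U$, for all large $n$ there is a point $x\in G^\alpha_n\cap U$ whose lift $(x,y)\in H^\alpha_n$ has $y\in V_\alpha$. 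Thus for all sufficiently large $n$ this lift lies in $D_n\cap\text{grid}=F_n$ and satisfies $x\in U$, $y\in V_\alpha$, giving $F_n\cap(U\times V_\alpha)\neq\emptyset$ cofinitely, as required.

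The step I expect to be the main obstacle is exactly the finiteness of $F_n$ in the presence of possibly \emph{uncountably} many slices $V_\alpha$ (recall $\kappa<\mathfrak{b}$ may well be uncountable, since a countable space can have uncountable $\pi$-weight). The naive attempt $F_n=\bigcup_{\alpha\in S_n}H^\alpha_n$ is doomed: covering each slice cofinitely through its own witness would force each $\alpha$ to belong to $S_n$ for all large $n$, and $\bigcup_M\bigcap_{n\ge M}S_n$ is countable whenever the $S_n$ are finite, so uncountably many slices cannot all be handled this way. The resolution — and the reason the hypotheses are precisely these — is that $X$ and $Y$ are \emph{countable}, so a single shared finite grid of points is automatically finite no matter how many slices it happens to serve simultaneously; H-separability of $X$ lets one growth rate control the $X$-coordinates uniformly over \emph{all} open $U$ rather than over a possibly uncountable $\pi$-base, while $\pi w(Y)<\mathfrak{b}$ lets a single dominating $g$ control the per-slice point indices at once.
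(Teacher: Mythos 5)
Your proof is correct and follows essentially the same route as the paper's: slice the dense sets along a $\pi$-base of $Y$ of size $\kappa<\mathfrak{b}$, apply the H-separability of $X$ to the projected slices, lift the witnesses back into the dense sets, encode the witnesses' indices as $\kappa$-many functions in $\mathbb{N}^{\mathbb{N}}$, and bound them all by a single $g$ using $\kappa<\mathfrak{b}$. The only (cosmetic) difference is bookkeeping: the paper takes $F_n$ to be an initial segment of an enumeration of the $n$-th dense set itself, whereas you take the trace of $D_n$ on a finite grid determined by fixed enumerations of $X$ and $Y$; both definitions capture the lifted witnesses for every slice whose index function is dominated at $n$, and the verification on basic boxes $U\times V_\alpha$ is identical.
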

\begin{proof}
Let $\kappa < \mathfrak{b}$ and $\mathcal{B} = \{B_\alpha : \alpha < \kappa\}$ be a $\pi$-base of $Y$. Pick a sequence $(Y_n)$ of dense subspaces of $X\times Y$. For each $n$ we can choose $Y_n = \{x_m^{(n)} : m\in \mathbb{N}\}$. Let $p_1: X\times Y\to X$ and $p_2: X\times Y\to Y$ be the projection mappings on $X$ and $Y$ respectively. Let $\alpha < \kappa$ be fixed. For each $n$ let $Z_n^{(\alpha)} = p_1(Y_n\cap (X\times B_\alpha))$. Clearly each $Z_n$ is dense in $X$. Since $X$ is H-separable, there exists a sequence $(F_n^{(\alpha)})$ such that for each $n$ $F_n^{(\alpha)}$ is a finite subset of $Z_n^{(\alpha)}$ and every nonempty open set of $X$ intersects $F_n^{(\alpha)}$ for all but finitely many $n$. For each $n$ choose a finite subset $G_n^{(\alpha)}$ of $Y_n\cap (X\times B_\alpha)$ such that $F_n^{(\alpha)} = p_1(G_n^{(\alpha)})$. Choose a $f_\alpha \in \mathbb{N}^\mathbb{N}$ such that $G_n^{(\alpha)} \subseteq \{x_m^{(n)} : m \leq f_\alpha(n)\}$. Thus we obtain a subset $\{f_\alpha : \alpha < \kappa\}$ of $\mathbb{N}^\mathbb{N}$. Since $\kappa < \mathfrak{b}$, there exists a $f\in \mathbb{N}^\mathbb{N}$ such that $f_\alpha \leq^* f$ for all $\alpha < \kappa$. For each $n$ let $F_n = \{x_m^{(n)} : m\leq f(n)\}$. Observe that the sequence $(F_n)$ witnesses for $(Y_n)$ that $X\times Y$ is H-separable. Let $U$ be a nonempty open set of $X\times Y$. Choose a nonempty open set $V$ of $X$ and a $\alpha < \kappa$ such that $V \times B_\alpha \subseteq U$. Then $V$ intersects $F_n^{(\alpha)}$ for all but finitely many $n$. We can find a $n_0\in \mathbb{N}$ such that $V\cap F_n^{(\alpha)} \neq \emptyset$ and $f_\alpha(n) \leq f(n)$ for all $n\geq n_0$. We claim that $(V \times B_\alpha) \cap F_n \neq \emptyset$ for all $n\geq n_0$. Choose a $n\geq n_0$ and a $x \in V\cap F_n^{(\alpha)}$. Now $x\in F_n^{(\alpha)}$ gives $x = p_1(x, y)$ for some $(x, y) \in G_n^{(\alpha)}$. It follows that $y\in B_\alpha$ and $(x, y) = x_m^{(n)}$ for some $m \leq f_\alpha(n)$. Since $f_\alpha(n) \leq f(n)$, from the construction of $F_n$ we can say that $(x , y) \in F_n$. Also since $(x, y) \in V\times B_\alpha$, $(V \times B_\alpha) \cap F_n \neq \emptyset$. Thus $(V \times B_\alpha) \cap F_n \neq \emptyset$ for all $n\geq n_0$, i.e. $U \cap F_n \neq \emptyset$ for all $n\geq n_0$. Hence $X\times Y$ is H-separable.
\end{proof}

\begin{Cor}
\label{cor14}
If $X$ is a countable H-separable space and $Y$ is a countable space with $\pi w(Y) < \mathfrak{b}$, then $X\times Y$ is S-separable.
\end{Cor}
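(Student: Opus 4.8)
The plan is to recognize this statement as an immediate consequence of the theorem that precedes it. Under exactly the stated hypotheses---$X$ countable and H-separable, $Y$ countable with $\pi w(Y) < \mathfrak{b}$---Theorem~\ref{thm12} already delivers the \emph{stronger} conclusion that $X\times Y$ is H-separable. So the entire substantive work (the $\mathfrak{b}$-bounding argument that tames the finite subsets across the $\pi$-base of $Y$) has been carried out there, and I would not repeat it.

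The only extra ingredient is the elementary implication recorded just after Definition~\ref{def1}: every H-separable space is S-separable. I would spell this out in one line. Suppose $(F_n)$ witnesses H-separability for a given sequence of dense subspaces, so that every nonempty open set $U$ meets $F_n$ for all but finitely many $n$. Given a finite collection $\mathcal{F}$ of nonempty open sets, each $U\in\mathcal{F}$ misses $F_n$ for only finitely many indices $n$; since $\mathcal{F}$ is finite, the union of these finitely many exceptional sets is finite, so there exists (indeed, cofinitely many) $n$ with $U\cap F_n\neq\emptyset$ simultaneously for all $U\in\mathcal{F}$. Hence the same sequence $(F_n)$ witnesses S-separability.

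Combining the two observations finishes the argument: Theorem~\ref{thm12} gives that $X\times Y$ is H-separable, and H-separability yields S-separability, so $X\times Y$ is S-separable. I expect no real obstacle here---the corollary simply reads off the conclusion of Theorem~\ref{thm12} through the trivial one-way collapse of the selection principle from the ``cofinitely many $n$'' quantifier to the ``some $n$'' quantifier. If anything, the only point worth stating carefully is that the finite family $\mathcal{F}$ is what makes the passage work, since it is the finiteness of $\mathcal{F}$ that keeps the set of bad indices finite.
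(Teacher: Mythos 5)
Your proposal is correct and matches the paper exactly: the paper states this as an unproved corollary of Theorem~\ref{thm12}, relying on the implication (noted after Definition~\ref{def1}) that every H-separable space is S-separable, which is precisely your argument. Your one-line justification of that implication---each of the finitely many open sets in $\mathcal{F}$ excludes only finitely many indices, so cofinitely many $n$ work for all of them simultaneously---is the right reason and needs no further elaboration.
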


From \cite[Corollary 2.5]{babin09} we get two separable metrizable spaces $X$ and $Y$ such that $C_p(X)$ and $C_p(Y)$ are M-separable but the product space $C_p(X) \times C_p(Y)$ is not M-separable (hence not S-separable). By Theorem~\ref{thm7}, $C_p(X)$ and $C_p(Y)$ are S-separable. Thus we can say that the product of two S-separable spaces need not be S-separable. However, we obtain the following result.

\begin{Th}
\label{thm13}
Let $\{X_n : n\in \mathbb{N}\}$ be a family of spaces and $X = \prod_{n\in \mathbb{N}} X_n$. If each $n$ $X_n^\prime = \prod_{k=1}^n X_k$ is S-separable, then $X$ is S-separable.
\end{Th}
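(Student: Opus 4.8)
The plan is to exploit the fact that every nonempty open set in the product $X=\prod_{n}X_n$ contains a basic box depending on only finitely many coordinates, so that any \emph{finite} family of open sets can be refined to boxes living on a common finite initial segment of coordinates. Such a finite family can then be transferred to the corresponding finite initial product $X_m'=\prod_{k=1}^m X_k$ via the canonical projection $\pi_m\colon X\to X_m'$ onto the first $m$ coordinates. Since $\pi_m$ is a continuous open surjection, it carries dense subspaces of $X$ to dense subspaces of $X_m'$ (continuity and surjectivity alone already give this), and a finite subset of $X_m'$ lying inside the image of a dense subspace can be lifted to a finite subset of that dense subspace by choosing one preimage per point. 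The idea is to apply the S-separability of $X_m'$ on $X_m'$ and pull the resulting finite sets back into $X$.

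To make the diagonalisation work across \emph{all} values of $m$ simultaneously, I would fix a partition of $\mathbb{N}$ into pairwise disjoint infinite sets $\{A_m : m\in\mathbb{N}\}$, writing $A_m=\{a_m(j):j\in\mathbb{N}\}$. Given a sequence $(Y_n)$ of dense subspaces of $X$, fix $m$ and consider the sequence $(\pi_m(Y_{a_m(j)}))_{j\in\mathbb{N}}$ of dense subspaces of $X_m'$. By S-separability of $X_m'$ there are finite sets $G_j^{(m)}\subseteq \pi_m(Y_{a_m(j)})$ such that for every finite family $\mathcal{H}$ of nonempty open sets of $X_m'$ some $j$ satisfies $W\cap G_j^{(m)}\neq\emptyset$ for all $W\in\mathcal{H}$. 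For each $j$ choose a finite $F_{a_m(j)}\subseteq Y_{a_m(j)}$ with $G_j^{(m)}\subseteq \pi_m(F_{a_m(j)})$. As $m$ ranges over $\mathbb{N}$ and every $n$ lies in exactly one block $A_m$, this defines a finite $F_n\subseteq Y_n$ for every $n$.

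To verify the defining property, let $\mathcal{F}$ be a finite family of nonempty open sets of $X$. Shrink each $U\in\mathcal{F}$ to a basic box contained in it; since $\mathcal{F}$ is finite there is a single $m$ so large that every chosen box has the form $\pi_m^{-1}(W_U)$ for a nonempty open $W_U\subseteq X_m'$. Applying the property of $(G_j^{(m)})_j$ to $\mathcal{H}=\{W_U:U\in\mathcal{F}\}$ yields a $j$ with $W_U\cap G_j^{(m)}\neq\emptyset$ for all $U\in\mathcal{F}$; set $n=a_m(j)$. For each $U$ pick $g\in W_U\cap G_j^{(m)}$ and, using $G_j^{(m)}\subseteq\pi_m(F_n)$, a point $x\in F_n$ with $\pi_m(x)=g$; then $x\in\pi_m^{-1}(W_U)\subseteq U$, so $U\cap F_n\neq\emptyset$ for all $U\in\mathcal{F}$, as required.

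I expect the main obstacle to be the bookkeeping that forces a single sequence $(F_n)$ to serve every finite family no matter how many coordinates it involves; this is exactly what the fixed partition $\{A_m\}$ resolves, by reserving an infinite index block for each coordinate threshold $m$ so that the S-separability of $X_m'$ can be invoked independently on each block. The reduction of open sets to boxes and the lifting of finite sets along $\pi_m$ are routine, the only point needing a remark being that $\pi_m$ sends dense sets to dense sets, which holds because $\pi_m$ is a continuous surjection.
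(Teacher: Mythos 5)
Your proposal is correct and follows essentially the same route as the paper's proof: partitioning $\mathbb{N}$ into infinite blocks indexed by the coordinate threshold $m$, applying S-separability of $X_m'$ to the projected dense sets on each block, lifting the resulting finite sets back into the $Y_n$, and verifying via basic boxes living on a common initial segment of coordinates. The only differences are notational (the paper writes the lifting implicitly by demanding finite $F_k\subseteq Y_k$ whose projections witness S-separability, where you make the lift explicit).
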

\begin{proof}
Let $(Y_n)$ be a sequence of dense subspaces of $X$. Let $\{N_n : n\in \mathbb{N}\}$ be a partition of $\mathbb{N}$ into disjoint infinite subsets. For each $n$ let $p_n : X\to X_n^\prime$ be the natural projection. Fix $n$. Apply the S-separability of $X_n^\prime$ to $(p_n(Y_k) : k\in N_n)$ to obtain a sequence $(F_k : k\in N_n)$ such that for each $k\in N_n$, $F_k$ is a finite subset of $Y_k$ and for every finite collection $\mathcal{F}$ of nonempty open sets of $X_n^\prime$ there exists a $k\in N_n$ such that $U\cap p_n(F_k) \neq \emptyset$ for all $U\in \mathcal{F}$. Thus we obtain a sequence $(F_n)$ such that for each $n$ $F_n$ is a finite subset of $Y_n$. Observe that $(F_n)$ witnesses for $(Y_n)$ that $X$ is S-separable. Let $\mathcal{F}$ be a finite collection of nonempty open sets of $X$. For each $U\in \mathcal{F}$ choose a basic member $V(U)$ of $X$ such that $V(U) \subseteq U$. Then for each $U\in \mathcal{F}$, $V(U) = \prod_{n\in \mathbb{N}} V(U)_n$ with $V(U)_n = X_n$ for all but finitely many $n$. We can find a $m\in \mathbb{N}$ such that $V(U)_n = X_n$ for all $n\geq m$ and for all $U\in \mathcal{F}$. There exists a $k\in N_m$ such that $p_m(V(U)) \cap p_m(F_k) \neq \emptyset$, i.e. $\prod_{i=1}^m V(U)_i \cap p_m(F_k) \neq \emptyset$ for all $U\in \mathcal{F}$. Now $\prod_{i=1}^m V(U)_i \cap p_m(F_k) \neq \emptyset$ gives a $x(U) = (x(U)_n) \in F_k$ such that $(x(U)_i : 1\leq i \leq m) \in \prod_{i=1}^m V(U)_i$. Since $V(U)_n = X_n$ for all $n\geq m$, $x(U)\in V(U)$. Thus $V(U)\cap F_k \neq \emptyset$ for all $U\in \mathcal{F}$. Hence $X$ is S-separable.
\end{proof}

\begin{Th}[{\cite{juhas89}}]
\label{thm9}
If $X$ is any compact Hausdorff space, then $X$ has a dense subspace $Y$ with $d(Y) = \pi w(X)$.
\end{Th}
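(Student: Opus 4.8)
The plan is to split the equality into its two inequalities. For the upper bound nothing new is required: any dense subspace $Y$ of $X$ satisfies $d(Y) \le \delta(X) \le \pi w(X)$ by the chain of inequalities recorded in the Preliminaries (recall $\delta(X) = \sup\{d(Y) : Y \text{ dense in } X\}$). Hence it suffices to exhibit a \emph{single} dense subspace $Y$ with $d(Y) \ge \pi w(X)$; equality, and the fact that the supremum defining $\delta(X)$ is attained, then follow automatically. Write $\kappa = \pi w(X)$. If $d(X) = \kappa$ we are done by taking $Y = X$, so the entire content lies in the case $d(X) < \kappa$ (as for $X = [0,1]^\kappa$, where $d(X)$ is merely $\log \kappa$).

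First I would isolate the combinatorial mechanism that forces a large density. Suppose we can produce a family $\{W_\alpha : \alpha < \kappa\}$ of nonempty open subsets of $X$ that is \emph{point-finite on a dense set}, in the sense that $X_\infty = \{x \in X : x \in W_\alpha \text{ for infinitely many } \alpha\}$ has empty interior. Then $Y := X \setminus X_\infty$ is dense, and I claim $d(Y) \ge \kappa$. Indeed, if $D \subseteq Y$ is dense in $X$ then $D$ meets every nonempty open set, in particular every $W_\alpha$; on the other hand each $d \in D \subseteq Y$ lies in only finitely many $W_\alpha$, so $\kappa = |\{\alpha : D \cap W_\alpha \ne \emptyset\}| \le \sum_{d \in D} |\{\alpha : d \in W_\alpha\}| \le |D| \cdot \omega$, forcing $|D| \ge \kappa$. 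This is exactly the ``finite support'' phenomenon visible in $[0,1]^\kappa$, where one takes $W_\alpha = \{x : x_\alpha \ne 0\}$ and lets $Y$ be the set of points of finite support, for which $X_\infty$ is the dense-but-interior-free set of points of infinite support.

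Thus the theorem reduces to the following assertion, which is where compactness and the full strength of $\pi w(X) = \kappa$ must be used: a compact Hausdorff space with $\pi w(X) = \kappa$ carries $\kappa$ nonempty open sets whose set of infinite-multiplicity points is nowhere dense. I would build such a family by transfinite recursion of length $\kappa$. At stage $\alpha$ the sets $\{W_\beta : \beta < \alpha\}$ already chosen number fewer than $\kappa$ and therefore do not form a $\pi$-base; this leaves room to select a fresh nonempty open $W_\alpha$ that is ``spread thin'' relative to the earlier ones, the point being to keep the running set of points covered infinitely often confined to a nowhere dense set. Passing this last invariant through limit stages is the main obstacle: one must prevent the infinite-multiplicity set from becoming somewhere dense in the limit, and it is precisely here that I expect compactness to enter, through a closed-set/finite-intersection argument controlling how the closures $\overline{W_\alpha}$ accumulate.

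So the genuine difficulty, and the real content of the cited result of Juh\'asz, is not the counting step of the second paragraph but the recursive construction of this point-finite-on-a-dense-set family: bounding the local multiplicity on a co-nowhere-dense set while still reaching $\kappa$ many open sets, in a space whose cellularity may be far below $\kappa$ (so that disjoint refinements are useless and the overlap of the $W_\alpha$ is unavoidable). I would organize the recursion as a tree of open sets refining a minimal $\pi$-base, and use compactness to propagate nowhere-density of the infinite-multiplicity set through the limits. Once the family is secured, taking $Y = X \setminus X_\infty$ and combining the two inequalities yields $d(Y) = \pi w(X)$, as required.
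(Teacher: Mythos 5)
Your first two paragraphs are sound, but you should be aware that the paper does not prove this statement at all: the bracketed citation in the theorem header is the paper's entire ``proof'' --- the result is imported wholesale from Juh\'asz--Shelah \cite{juhas89}. So your attempt is competing with the original research-level argument, and it has a genuine gap. What is correct: the upper bound $d(Y)\le\delta(X)\le\pi w(X)$ is the chain from the Preliminaries, and your counting mechanism works --- if $\kappa=\pi w(X)>\omega$ and $\{W_\alpha:\alpha<\kappa\}$ are nonempty open sets whose set $X_\infty$ of infinite-multiplicity points has empty interior, then $Y=X\setminus X_\infty$ is dense, any $D\subseteq Y$ dense in $Y$ is dense in $X$, hence meets every $W_\alpha$, and $\kappa\le|D|\cdot\omega$ forces $|D|\ge\kappa$. (Do note the case $\kappa=\omega$ separately, where the inequality forces nothing; there the theorem is trivial.)

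The gap is that everything after this reduction is a plan, not a proof, and the plan sits exactly where all of the content of the cited theorem lies. You never construct the point-finite-on-a-dense-set family: no recursion invariant is stated, no rule is given for choosing $W_\alpha$ at successor stages (the observation that fewer than $\kappa$ open sets cannot form a $\pi$-base does not by itself ``leave room'' to keep $X_\infty$ confined to a nowhere dense set), and for limit stages --- which you yourself identify as the main obstacle --- you only say you \emph{expect} compactness to enter via a finite-intersection argument. Moreover, it is not clear that the intermediate statement you reduce to is even true: the theorem asserts the existence of \emph{some} dense subspace of density $\kappa$, whereas you assert a specific combinatorial mechanism (a family of $\kappa$ nonempty open sets whose infinite-multiplicity set has empty interior) in \emph{every} compact space with $d(X)<\pi w(X)=\kappa$. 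That is a priori a stronger claim; it does hold in the product examples that motivated it ($[0,1]^{\kappa}$, $2^{\omega_1}$, via coordinate cylinders), but you give no argument covering general compacta, e.g.\ Stone spaces of arbitrary Boolean algebras. As it stands you have replaced a known hard theorem by an unproven claim of at least comparable difficulty, so the attempt cannot be accepted as a proof.
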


\begin{Prop}
\label{prop6}
A compact space $X$ is S-separable if and only if $\pi w(X) = \omega$.
\end{Prop}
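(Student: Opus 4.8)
The plan is to prove the two implications separately; the forward direction is where compactness is genuinely needed, whereas the converse in fact holds for every space with countable $\pi$-weight.

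For the forward direction, suppose $X$ is compact and S-separable. First I would invoke \ref{prop301} to conclude that every dense subspace $Y$ of $X$ is again S-separable, hence M-separable (since every S-separable space is M-separable). A one-line observation — feeding the constant sequence $Y_n=Y$ into the definition of M-separability and noting that the resulting $\bigcup_n F_n$ is a countable dense set — shows that every M-separable space is separable, so $d(Y)=\omega$ for each dense $Y\subseteq X$; that is, $\delta(X)=\omega$. Now I would bring in Theorem~\ref{thm9}, which furnishes a dense subspace $Y^\ast$ with $d(Y^\ast)=\pi w(X)$, whence $\pi w(X)=d(Y^\ast)\le\delta(X)=\omega$, and so $\pi w(X)=\omega$. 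I expect this implication to be the main obstacle, and the crucial point is precisely the appeal to Theorem~\ref{thm9}: it is exactly where compactness enters, converting the statement ``all dense subspaces are separable'' into a bound on the $\pi$-weight. Without it, $\delta(X)=\omega$ need not force $\pi w(X)=\omega$.

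For the converse, assume $\pi w(X)=\omega$ and fix a countable $\pi$-base $\mathcal{B}=\{B_m:m\in\mathbb{N}\}$. Given a sequence $(Y_n)$ of dense subspaces, I would first reduce the test condition to basic open sets: for a finite family $\mathcal{F}=\{U_1,\dots,U_k\}$ of nonempty open sets, choose $B_{m_i}\in\mathcal{B}$ with $B_{m_i}\subseteq U_i$, so that any finite set meeting each $B_{m_i}$ automatically meets each $U_i$. Thus it suffices to arrange, for every finite $S\subseteq\mathbb{N}$, some index $n$ with $B_m\cap F_n\neq\emptyset$ for all $m\in S$. Since there are only countably many finite $S\subseteq\mathbb{N}$, I would enumerate them as $(S_j)_{j\in\mathbb{N}}$, fix a surjection $\phi:\mathbb{N}\to\mathbb{N}$, and for each $n$ set $F_n=\{y_{n,m}:m\in S_{\phi(n)}\}$, where $y_{n,m}$ is any point of the nonempty set $Y_n\cap B_m$ (nonempty by density of $Y_n$). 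Each $F_n$ is then a finite subset of $Y_n$, and given any finite family $\mathcal{F}$ one selects the corresponding $S_j$ as above and any $n$ with $\phi(n)=j$ to witness the requirement.

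It is worth recording in the write-up that compactness plays no role in this converse argument, which is why the restriction to compact spaces in the statement is forced only by the forward implication.
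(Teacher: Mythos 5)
Your proof is correct, and while your forward implication is essentially the paper's own argument (Theorem~\ref{thm9} plus Proposition~\ref{prop301}, noting that S-separable implies separable), your converse takes a genuinely different route. The paper dispatches the converse in two lines: $\pi w(X)=\omega$ gives $\delta(X)=\omega$ (since $\delta(X)\le \pi w(X)$), and then Theorem~\ref{thm1} applies because $\omega<\mathfrak{d}$; this reuses machinery already proved, but that machinery is not elementary --- its proof runs through $\maxfin$ and the dominating number $\mathfrak{d}$. Your construction instead works directly from a countable $\pi$-base: enumerate the finite subsets of indices, use a surjection $\phi$ to assign each $n$ a finite target set $S_{\phi(n)}$, and pick one point of $Y_n$ in each corresponding $\pi$-base element. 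This is self-contained, avoids cardinal invariants entirely, and makes transparent your (correct) observation that compactness is needed only in the forward direction, since the converse holds for any space of countable $\pi$-weight. One further remark: a trivial modification of your construction --- requiring $F_n$ to meet $B_1,\dots,B_n$ rather than only the sets indexed by $S_{\phi(n)}$ --- yields that every nonempty open set meets $F_n$ for all but finitely many $n$, i.e.\ countable $\pi$-weight in fact implies the stronger property of H-separability (compare Lemma~\ref{lemma2} of the paper). So your elementary argument actually proves more than what the proposition asks for, whereas the paper's citation of Theorem~\ref{thm1} caps out at S-separability.
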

\begin{proof}
Assume that $X$ is S-separable. By Theorem~\ref{thm9}, there exists a dense subspace $Y$ of $X$ such that $d(Y) = \pi w(X)$. Then by Proposition~\ref{prop301}, $Y$ is S-separable and so $Y$ is separable, i.e. $d(Y) = \omega$. Thus $\pi w(X) = \omega$.

Conversely suppose that $\pi w(X) = \omega$. This gives $\delta (X) = \omega$. By Theorem~\ref{thm1}, $X$ is S-separable.
\end{proof}

It is important to mention that compactness is essential in the above result. Indeed, the space $C_p([0,1])$ is S-separable because $iw([0,1]) = \omega$ and every finite power of $[0,1]$ is Menger (see Theorem~\ref{thm7}). But $\pi w(C_p([0,1])) = \mathfrak{c}$.

\begin{Prop}
\label{prop7}
Let $X$ be a separable compact space. If $X$ is either scattered or has countable tightness, then every continuous image of $X$ is S-separable.
\end{Prop}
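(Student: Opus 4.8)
The plan is to route everything through Proposition~\ref{prop6}. If $f\colon X\to Y$ is a continuous surjection, then $Y$ is compact (a continuous image of a compact space) and separable (a continuous image of the separable space $X$), and since all spaces here are Tychonoff, $Y$ is compact Hausdorff. Hence, by Proposition~\ref{prop6}, it suffices to prove that $\pi w(Y)=\omega$. I would treat the two hypotheses on $X$ separately, showing in each case that the relevant property is inherited by $Y$ and then forces countable $\pi$-weight.

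For the scattered case, first I would show that $Y$ is scattered. The standard device is that a continuous surjection between compacta restricts to an irreducible surjection on some closed subspace; so if $Y$ failed to be scattered it would contain a nonempty closed perfect set $P$, and pulling $P$ back and passing to a minimal closed subspace mapping onto $P$ would give an irreducible surjection $g\colon K\to P$ with $K$ compact and scattered. Picking an isolated point $x_0$ of $K$, the set $\{x_0\}$ is open, so by irreducibility $g(K\setminus\{x_0\})\neq P$; the omitted point is then isolated in $P$, contradicting that $P$ is perfect. Once $Y$ is known to be scattered, the set $I(Y)$ of isolated points of $Y$ is dense, and since $Y$ is separable every isolated point lies in each countable dense set, so $I(Y)$ is countable. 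Then $\{\{y\}:y\in I(Y)\}$ is a countable $\pi$-base (every nonempty open set meets the dense set $I(Y)$), giving $\pi w(Y)=\omega$.

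For the countable-tightness case, I would use Arhangel'skii's characterization of tightness in compacta, namely that for a compact Hausdorff space tightness equals the supremum of the lengths of its free sequences. The key observation is that free sequences lift along $f$: given a free sequence $(y_\alpha)_{\alpha<\omega_1}$ in $Y$, choosing $x_\alpha\in f^{-1}(y_\alpha)$ produces a free sequence in $X$, because $f\bigl(\overline{\{x_\beta:\beta<\alpha\}}\bigr)\subseteq\overline{\{y_\beta:\beta<\alpha\}}$ and likewise for the tails, so a common point of the two closures in $X$ would map to a common point of the two disjoint closures in $Y$. Thus an uncountable free sequence in $Y$ would yield one in $X$, contradicting $t(X)=\omega$; hence $t(Y)=\omega$. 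Now Shapirovskii's inequality $\pi_\chi(y,Y)\le t(y,Y)$ for compact $Y$ gives $\pi_\chi(Y)=\omega$, and the general bound $\pi w(Y)\le d(Y)\cdot\pi_\chi(Y)$ together with separability yields $\pi w(Y)=\omega$.

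In both cases Proposition~\ref{prop6} then delivers the S-separability of $Y$. I expect the genuine content to lie not in the final $\pi$-weight computations but in the two inheritance steps: recognizing that scatteredness survives continuous images of compacta (via the irreducible-restriction trick) and that countable tightness does too (via lifting free sequences). The latter leans on the external theorems of Arhangel'skii and Shapirovskii, and the cleanest way to present it is to isolate the free-sequence lifting as the one nonroutine verification, with the remaining inequalities being standard cardinal-function estimates.
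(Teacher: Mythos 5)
Your proposal is correct and takes essentially the same route as the paper: both reduce the problem to showing $\pi w(Y)=\omega$ for the (compact, separable) image $Y$ and then invoke Proposition~\ref{prop6}, using preservation of scatteredness in the first case and preservation of countable tightness plus \v{S}apirovski\u{i}'s bound $\pi_\chi \le t$ in the second. The only difference is one of self-containment: you prove the two preservation facts (via the irreducible-restriction trick and free-sequence lifting) that the paper simply cites from Arhangel'ski\u{i}.
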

\begin{proof}
Let $Y$ be a continuous image of $X$. If $X$ is scattered, then $Y$ is scattered and separable. It follows that $\pi w(Y) = \omega$ and hence by Proposition~\ref{prop6}, $Y$ is S-separable.

Now assume that $X$ has countable tightness, i.e. $t(X) = \omega$. This gives $t(Y) = \omega$ (see \cite[1.1.1]{arhan78}) and consequently $\pi_\chi(Y) = \omega$ (see \cite{sapir75}). From the separability of $Y$ we can say that $\pi w(Y) = \omega$. Again by Proposition~\ref{prop6}, $Y$ is S-separable.
\end{proof}

\begin{Cor}
\label{cor10}
If a compact space $X$ is either scattered or has countable tightness, then every separable subspace of $X$ is S-separable.
\end{Cor}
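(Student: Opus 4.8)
The plan is to reduce the statement to the already-established compact case by passing to closures. Given a separable subspace $S$ of $X$, I would set $K = \overline{S}$, the closure of $S$ taken in $X$, and argue that $K$ falls under the hypotheses of Proposition~\ref{prop7} (equivalently Proposition~\ref{prop6}), so that $K$ is S-separable; since $S$ is dense in $K$, Proposition~\ref{prop301} then transfers S-separability back to $S$.

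First I would check that $K$ is a separable compact space of the required kind. As $K$ is a closed subset of the compact space $X$, it is compact. Because $S$ is separable and dense in $K$, any countable dense subset of $S$ remains dense in $K$, so $K$ is separable. Next, if $X$ is scattered then every subspace of $X$ is scattered, and in particular $K$ is scattered; likewise, countable tightness is inherited by arbitrary subspaces, so if $t(X) = \omega$ then $t(K) = \omega$. Hence $K$ is a separable compact space that is either scattered or of countable tightness.

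With this in hand, I would apply Proposition~\ref{prop7} to the space $K$: taking the continuous image to be $K$ itself via the identity map, Proposition~\ref{prop7} yields that $K$ is S-separable. (Alternatively, one can run the argument behind Proposition~\ref{prop7} to conclude $\pi w(K) = \omega$ and then appeal to Proposition~\ref{prop6}.) Finally, since $S$ is dense in $K$ and $K$ is S-separable, Proposition~\ref{prop301} gives that $S$ is S-separable, which completes the proof.

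I do not anticipate a serious obstacle here, since the essential content is exactly the separable compact case handled in Proposition~\ref{prop7}. The only points requiring a little care are verifying that the relevant properties---scatteredness and countable tightness---descend to the closed subspace $K$, and that separability of the dense subspace $S$ forces separability of $K$; both are routine hereditary-type arguments.
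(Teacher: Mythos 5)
Your proof is correct, and it takes a route that is genuinely different from the paper's. The paper argues directly on the separable subspace $Y$: scatteredness (resp.\ countable tightness) passes down to $Y$, \v{S}apirovski\u{i}'s theorem \cite{sapir75} is invoked to get $\pi_\chi(Y)=\omega$, separability then yields $\pi w(Y)=\omega$, and Proposition~\ref{prop6} (in substance Theorem~\ref{thm1}, since $Y$ need not be compact) finishes. You instead pass to the closure $K=\overline{S}$, check that $K$ is a separable compact space that is again scattered (resp.\ countably tight), apply Proposition~\ref{prop7} to $K$ via the identity map, and descend to the dense subspace $S$ by Proposition~\ref{prop301}. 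Every step you use---compactness of closed subspaces of compacta, density in $K$ of a countable dense subset of $S$, heredity of scatteredness and of countable tightness, and the two cited propositions---is sound. In fact your reduction is the more careful one: \v{S}apirovski\u{i}'s inequality $\pi_\chi\le t$ is a theorem about compacta and fails for general spaces (for instance, the countable dense non-S-separable subspace of $2^{\mathfrak d}$ from Theorem~\ref{thm2} is countably tight, being countable, yet its $\pi$-character must be uncountable, since a countable space with countable $\pi$-character has countable $\pi$-weight and would be S-separable by Theorem~\ref{thm1}); so the paper's appeal to \cite{sapir75} for the non-compact subspace $Y$ tacitly requires exactly your closure trick, namely applying \v{S}apirovski\u{i} to the compactum $\overline{Y}$ and tracing a countable $\pi$-base down to the dense subspace $Y$. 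What the paper's direct computation buys in exchange is the explicit cardinal conclusion $\pi w(Y)=\omega$, which your black-box reduction does not produce.
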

\begin{proof}
Suppose that $X$ is scattered. Let $Y$ be a separable subspace of $X$. Since every subspace of a scattered space is scattered, $Y$ is scattered. It follows that $\pi w(Y) = \omega$ and hence by Proposition~\ref{prop6}, $Y$ is S-separable.

Now suppose that $X$ has countable tightness, i.e. $t(X) = \omega$. This gives $t(Y) = \omega$ and consequently $\pi_\chi(Y) = \omega$ (see \cite{sapir75}). From the separability of $Y$ we can say that $\pi w(Y) = \omega$. Again by Proposition~\ref{prop6}, $Y$ is S-separable.
\end{proof}

Thus if $X$ is a countable non-S-separable space, then $X$ cannot be embedded in a compact space of countable tightness.

In the following example we show that a continuous image of a compact S-separable space may not be S-separable.
\begin{Ex}
\label{ex2}
There exists a compact S-separable space which has a non-S-separable continuous image.
\end{Ex}
\begin{proof}
Consider the space $\beta \mathbb{N}$. Since $\pi w(\beta \mathbb{N}) = \omega$ (see \cite{kkjv}), by Proposition~\ref{prop6}, $\beta \mathbb{N}$ is S-separable. The Tychonoff cube $\mathbb{I}^\mathfrak{c}$ can be obtained as a continuous image of $\beta \mathbb{N}$, where $\mathbb{I} = [0,1]$. $\mathbb{I}^\mathfrak{c}$ has a countable dense subspace which is not S-separable (see \cite[Example 2.14]{bella08} for explanation). By Proposition~\ref{prop301}, $\mathbb{I}^\mathfrak{c}$ is not S-separable.
\end{proof}

Note that every continuous image of a compact space $X$ is S-separable does not guarantee that $t(X) = \omega$. Indeed, there exists a separable compact scattered space $X$ with $t(X) \geq \omega_1$ (see \cite[Example 2.8]{bella08}). By Proposition~\ref{prop7}, every continuous image of $X$ is S-separable.

\subsection{Further observations on S-separability}
For a set $Y\subseteq\mathbb{N}^\mathbb{N}$, $\maxfin(Y)$ is defined as \[\maxfin(Y)=\left\{\max\{f_1,f_2,\dotsc,f_k\} : f_1,f_2,\dotsc,f_k\in Y\;\text{and}\; k\in\mathbb{N}\right\},\] where $\max\{f_1,f_2,\dotsc,f_k\}(n)=\max\{f_1(n),f_2(n),\dotsc,f_k(n)\}$ for all $n\in\mathbb{N}$.
\begin{Th}
\label{thm1}
If $\delta(X) = \omega$ and $\pi w(X) < \mathfrak{d}$, then $X$ is S-separable.
\end{Th}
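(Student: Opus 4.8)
The plan is to convert the hypothesis $\pi w(X)<\mathfrak d$ into a non-domination statement about the family $\maxfin(\{g_\alpha:\alpha<\kappa\})$ introduced just above the theorem. First I would fix a $\pi$-base $\mathcal B=\{B_\alpha:\alpha<\kappa\}$ of $X$ with $\kappa=\pi w(X)<\mathfrak d$, and record the reduction that it suffices to satisfy the defining condition of S-separability for finite subfamilies of $\mathcal B$: given any finite family $\mathcal F$ of nonempty open sets, replace each $U\in\mathcal F$ by a nonempty $B_\alpha\subseteq U$, and note that $B_\alpha\cap F_n\neq\emptyset$ forces $U\cap F_n\neq\emptyset$.

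Now let $(Y_n)$ be a sequence of dense subspaces of $X$. Since $\delta(X)=\omega$, each $Y_n$ has $d(Y_n)=\omega$, so I would pick a countable dense subset $D_n=\{y^{(n)}_m:m\in\mathbb N\}$ of $Y_n$; because $D_n$ is dense in $Y_n$ and $Y_n$ is dense in $X$, each $D_n$ is dense in $X$. For every $\alpha<\kappa$ define $g_\alpha\in\mathbb N^{\mathbb N}$ by $g_\alpha(n)=\min\{m:y^{(n)}_m\in B_\alpha\}$, which is well defined since $D_n\cap B_\alpha\neq\emptyset$. This yields a family $\{g_\alpha:\alpha<\kappa\}\subseteq\mathbb N^{\mathbb N}$, whence $\maxfin(\{g_\alpha:\alpha<\kappa\})$ has cardinality at most $\kappa<\mathfrak d$.

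Since a family of size strictly below $\mathfrak d$ cannot be dominating, there is an $f\in\mathbb N^{\mathbb N}$ with $f\not\leq^{*}h$ for every $h\in\maxfin(\{g_\alpha:\alpha<\kappa\})$. I would then set $F_n=\{y^{(n)}_m:m\leq f(n)\}$, a finite subset of $Y_n$, and claim $(F_n)$ witnesses S-separability. Given a finite family reduced to $B_{\alpha_1},\dots,B_{\alpha_k}$, put $h=\max\{g_{\alpha_1},\dots,g_{\alpha_k}\}\in\maxfin(\{g_\alpha:\alpha<\kappa\})$; from $f\not\leq^{*}h$ there is an $n$ with $f(n)>h(n)\geq g_{\alpha_i}(n)$ for every $i$, so $y^{(n)}_{g_{\alpha_i}(n)}\in B_{\alpha_i}\cap F_n$ for each $i$, which delivers the single good $n$ required.

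The crux, and the step to get exactly right, is the matching between the selective quantifier in S-separability (one $n$ per finite family) and the combinatorial notion of \emph{non-domination} rather than unboundedness. It is precisely because S-separability asks only for a single good $n$ per finite collection that the condition ``$f\not\leq^{*}h$ for all $h$'', equivalently ``$\maxfin(\{g_\alpha:\alpha<\kappa\})$ is not dominating'', suffices; demanding the condition for all but finitely many $n$, as in H-separability, would instead push one toward a bounding-number hypothesis of the type appearing in Theorem~\ref{thm12}. Beyond this I anticipate no serious obstacle, since the density of each $D_n$ makes every $g_\alpha$ total and the $\pi$-base reduction is routine.
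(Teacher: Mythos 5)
Your proposal is correct and is essentially the paper's own argument: fix a $\pi$-base of size $\kappa<\mathfrak d$, use $\delta(X)=\omega$ to extract countable dense subsets of the $Y_n$, encode the minimal hitting indices as functions $g_\alpha$, and exploit the fact that $\maxfin(\{g_\alpha:\alpha<\kappa\})$ has size $<\mathfrak d$ and hence is not dominating to obtain the function $f$ defining the finite selections $F_n$. The only difference is presentational — you phrase the escape property as non-domination ($f\not\leq^* h$ for all $h$ in the $\maxfin$ family) while the paper states it pointwise via the witnesses $n_F$, and your closing remark contrasting this with the $\mathfrak b$-type hypothesis needed for H-separability matches the paper's Theorem~\ref{thm12}.
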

\begin{proof}
Let $\mathcal{B} = \{U_\alpha : \alpha < \kappa\}$ be a $\pi$-base for $X$ with $U_\alpha \neq \emptyset$ for all $\alpha < \kappa$. Assume that $\kappa < \mathfrak{d}$. Let $(Y_n)$ be a sequence of dense subspaces of $X$. Since $\delta(X) = \omega$, we get a sequence $(Z_n)$ such that for each $n$ $Z_n = \{x_m^{(n)} : m\in \mathbb{N}\}$ is a dense subspace of $Y_n$. For each $\alpha< \kappa$ we define a $f_\alpha \in \mathbb{N}^\mathbb{N}$ by $f_\alpha(n) = \min \{m\in \mathbb{N} : x_m^{(n)} \in U_\alpha\}$. Since $Y = \{f_\alpha : \alpha < \kappa\}$ has cardinality less than $\mathfrak{d}$, $\maxfin(Y)$ also has cardinality less than $\mathfrak{d}$. Then there exist a $g\in \mathbb{N}^\mathbb{N}$ and for each finite $F \subseteq \kappa$ a $n_F \in \mathbb{N}$ such that $f_F (n_F) < g(n_F)$ with $f_F \in \maxfin(Y)$. We use the convention that if $F=\{\alpha\}$, $\alpha < \kappa$, then we write $f_\alpha$ instead of $f_F$. For each $n$ let $F_n = \{x_m^{(n)} : m\leq g(n)\}$. Observe that the sequence $(F_n)$ witnesses for $(Y_n)$ that $X$ is S-separable. Let $\mathcal{F}$ be a finite collection of nonempty open sets of $X$. Then there exists a finite $F \subseteq \kappa$ such that each member of $\mathcal{F}$ contains a $U_\alpha$ for some $\alpha \in F$. We claim that $U\cap F_{n_F} \neq \emptyset$ for all $U\in \mathcal{F}$. Pick a $U\in \mathcal{F}$. Choose a $\beta \in F$ such that $U_\beta \subseteq U$. Now from the definition of $f_\beta$ we get $x_{f_\beta(n_F)}^{(n_F)} \in U_\beta$. Since $f_F(n_F) < g(n_F)$ and $f_F = \max\{f_\alpha : \alpha \in F\}$, $f_\beta(n_F) < g(n_F)$. It follows that $x_{f_\beta(n_F)}^{(n_F)} \in F_{n_F}$ and hence $U_\beta \cap F_{n_F} \neq \emptyset$, i.e. $U \cap F_{n_F} \neq \emptyset$. Thus $X$ is S-separable.
\end{proof}

Recall from \cite{kkjv} that for $2^\kappa$ (the Cantor cube of weight $\kappa$), $\pi w(2^\kappa) = \kappa = w(2^\kappa)$.
\begin{Cor}
\label{cor1}
If $\kappa < \mathfrak{d}$, then every countable subspace of $2^\kappa$ is S-separable.
\end{Cor}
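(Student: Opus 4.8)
The plan is to deduce the statement directly from Theorem~\ref{thm1}, by checking that its two hypotheses hold for an arbitrary countable subspace $Y$ of $2^\kappa$ when $\kappa<\mathfrak d$. So I would fix such a $Y$ and verify (a) $\delta(Y)=\omega$ and (b) $\pi w(Y)<\mathfrak d$; the conclusion is then immediate.

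First I would establish $\delta(Y)=\omega$. Since $Y$ is countable, every subspace of $Y$ is countable, so for any dense subspace $Z$ of $Y$ we have $d(Z)\le|Z|\le\omega$. Taking the supremum over all dense $Z$ gives $\delta(Y)=\sup\{d(Z):Z\text{ dense in }Y\}\le\omega$, and hence $\delta(Y)=\omega$ when $Y$ is infinite (the finite case being trivial, as a finite space is S-separable outright). The point here is simply that countability of $Y$ caps the supremum defining $\delta$.

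Next I would bound the $\pi$-weight. By the recalled fact that $w(2^\kappa)=\kappa$ and the monotonicity of weight under passage to subspaces, $w(Y)\le w(2^\kappa)=\kappa$. Combining this with the general inequality $\pi w(Y)\le w(Y)$ noted in the preliminaries yields $\pi w(Y)\le\kappa<\mathfrak d$. With both $\delta(Y)=\omega$ and $\pi w(Y)<\mathfrak d$ in hand, Theorem~\ref{thm1} applies and gives that $Y$ is S-separable. As $Y$ was an arbitrary countable subspace of $2^\kappa$, this finishes the proof.

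I do not expect any genuine obstacle, since the entire content reduces to two routine cardinal-function estimates. The only subtlety worth flagging is the use of $\delta$ rather than $d$ as the invariant controlled by Theorem~\ref{thm1}: one must confirm that \emph{every} dense subspace of $Y$, not merely $Y$ itself, is separable. But this is immediate because all subspaces of a countable space are countable, so the supremum is harmlessly bounded by $\omega$.
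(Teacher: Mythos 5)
Your proposal is correct and follows exactly the paper's own argument: both verify the hypotheses of Theorem~\ref{thm1} by noting that countability of the subspace gives $\delta(Y)=\omega$, and that monotonicity of weight together with $w(2^\kappa)=\kappa$ and $\pi w(Y)\le w(Y)$ gives $\pi w(Y)\le\kappa<\mathfrak d$. The only difference is cosmetic --- you spell out the supremum argument for $\delta(Y)=\omega$ and the trivial finite case, which the paper leaves implicit.
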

\begin{proof}
Let $X$ be a countable subspace of $2^\kappa$. Then $\delta(X) = \omega$. Since $w(2^\kappa) = \kappa$ and weight is monotone (i.e. for every subspace $Y$ of a space $X$, $w(Y) \leq w(X)$), $w(X) \leq \kappa$. Also since $\pi w (X) \leq w(X)$, $\pi w(X) \leq \kappa$. Hence the result.
\end{proof}

\begin{Cor}
\label{cor11}
Assume $\mach$. Then every countable subspace of $2^{\omega_1}$ is S-separable.
\end{Cor}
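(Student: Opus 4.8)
The plan is to reduce the statement directly to Corollary~\ref{cor1}, which asserts that every countable subspace of $2^\kappa$ is S-separable whenever $\kappa < \mathfrak{d}$. Since the exponent here is fixed at $\omega_1$, the only thing that actually needs verifying is the cardinal inequality $\omega_1 < \mathfrak{d}$ under the hypothesis $\mach$.

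First I would recall the standard consequence of Martin's Axiom that $\mathfrak{b} = \mathfrak{c}$. Because the inequalities $\mathfrak{b} \leq \mathfrak{d} \leq \mathfrak{c}$ hold in ZFC, this forces $\mathfrak{d} = \mathfrak{c}$. The assumption $\neg CH$ supplies $\mathfrak{c} > \omega_1$, and combining these gives $\mathfrak{d} = \mathfrak{c} > \omega_1$, that is, $\omega_1 < \mathfrak{d}$.

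Applying Corollary~\ref{cor1} with $\kappa = \omega_1$ then completes the argument. I expect no genuine obstacle: once the inequality $\omega_1 < \mathfrak{d}$ is established, the conclusion is a verbatim instance of Corollary~\ref{cor1}. The only step requiring any care is invoking the correct set-theoretic fact, namely that $MA$ raises the bounding number (and hence the dominating number) to the continuum, so that $\neg CH$ is precisely what separates $\mathfrak{d}$ from $\omega_1$.
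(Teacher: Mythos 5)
Your proposal is correct and follows essentially the same route as the paper: both establish $\omega_1 < \mathfrak{d}$ from $\mach$ (the paper states $\mathfrak{d}=\mathfrak{c}$ directly, you derive it via $\mathfrak{b}=\mathfrak{c}$ and $\mathfrak{b}\leq\mathfrak{d}\leq\mathfrak{c}$, which is equally valid) and then reduce to the earlier result for $2^\kappa$ with $\kappa<\mathfrak{d}$. The only cosmetic difference is that you cite Corollary~\ref{cor1} verbatim, whereas the paper re-runs its short argument ($\delta(X)=\omega$, $\pi w(X)\leq w(X)\leq\omega_1<\mathfrak{d}$, then Theorem~\ref{thm1}) inline.
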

\begin{proof}
Note that MA gives $\mathfrak{d} = \mathfrak{c}$ and so $\omega_1 < \mathfrak{d}$. Let $X$ be a countable subspace of $2^{\omega_1}$. Then $\delta(X) = \omega$. Since $w(2^{\omega_1}) = \omega_1$, $w(X) \leq \omega_1$. Also since $\pi w(X) \leq w(X)$, $\pi w(X) \leq \omega_1 < \mathfrak{d}$. Thus $X$ is S-separable.
\end{proof}

\begin{Th}[{\cite[Theorem 2.18]{bella08}}]
\label{thm2}
The space $2^\mathfrak{d}$ contains a countable dense subspace which is not M-separable (hence not S-separable).
\end{Th}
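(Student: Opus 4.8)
The plan is to realise the failure of M-separability as the impossibility of dominating a family of size $\mathfrak{d}$ by a single function, exactly dualising the mechanism behind Theorem~\ref{thm1}. Recall that in the proof of Theorem~\ref{thm1} one attaches to each member $U_\alpha$ of a $\pi$-base and to each dense piece $Z_n=\{x_m^{(n)}:m\in\mathbb{N}\}$ the ``first-hitting'' index $f_\alpha(n)=\min\{m:x_m^{(n)}\in U_\alpha\}$, selects a finite $F_n$ of ``height'' $g(n)$, and succeeds precisely when $g$ escapes $\maxfin(\{f_\alpha\})$; when $\pi w<\mathfrak{d}$ such a $g$ exists. Here I want the opposite: a countable crowded dense $D\subseteq 2^{\mathfrak{d}}$, a partition $D=\bigsqcup_n D_n$ into dense pieces with fixed enumerations $D_n=\{x_m^{(n)}:m\in\mathbb{N}\}$, and a family $\{U_\alpha:\alpha<\mathfrak{d}\}$ of nonempty basic open sets whose first-hitting functions $f_\alpha$ form an \emph{everywhere-dominating} family, i.e.\ for every $g\in\mathbb{N}^{\mathbb{N}}$ there is $\alpha$ with $f_\alpha(n)>g(n)$ for all $n$.

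Granting such a configuration, the sequence $(D_n)$ witnesses non-M-separability. Given any selector $F_n\subseteq D_n$, set $g(n)=\max\{m:x_m^{(n)}\in F_n\}$ and pass to the initial segment $F_n'=\{x_m^{(n)}:m\leq g(n)\}\supseteq F_n$; since a subset of a non-dense set is non-dense, it suffices to show $\bigcup_n F_n'$ is not dense. Apply everywhere-domination to $g$ to obtain $\alpha$ with $f_\alpha(n)>g(n)$ for all $n$. Then the first point of $D_n$ lying in $U_\alpha$ has index $f_\alpha(n)>g(n)$, so no point of $F_n'$ lies in $U_\alpha$; hence $U_\alpha\cap\bigcup_n F_n'=\emptyset$ and the union is not dense. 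It remains to observe that everywhere-dominating families of size $\mathfrak{d}$ exist: from a $\leq^{*}$-dominating family $\{h_\alpha:\alpha<\mathfrak{d}\}$, the family $\{h_\alpha+c:\alpha<\mathfrak{d},\ c\in\mathbb{N}\}$ (still of cardinality $\mathfrak{d}$) is everywhere-dominating, because if $g\leq^{*}h_\alpha$ and $g$ is bounded by $M$ on the finitely many exceptional coordinates, then $h_\alpha+M+1>g$ everywhere. Thus a target family of the correct cardinality is available to aim at, and this is precisely why the threshold sits at $\mathfrak{d}$ (cf.\ Corollary~\ref{cor1}).

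The remaining and genuinely hard part is to \emph{realise} such a target as a family of first-hitting functions inside a single countable dense subspace. Since $\mathfrak{d}\leq\mathfrak{c}$, the Hewitt--Marczewski--Pondiczery theorem guarantees that $2^{\mathfrak{d}}$ is separable; I would run an HMP-type construction that simultaneously (i) produces a countable crowded dense $D$ pre-split into dense pieces $D_n$ with prescribed enumerations, and (ii) assigns to the $\mathfrak{d}$ coordinates basic open sets $U_\alpha$ whose first hit inside each $D_n$ is pushed as deep as the everywhere-dominating family demands. The main obstacle is exactly this simultaneity: each $U_\alpha$ depends on only finitely many of the $\mathfrak{d}$ coordinates while $D$ is merely countable, so one must interleave the enumeration of $D$ with the coordinatewise requirements by a careful bookkeeping, ensuring at every stage that each $D_n$ stays dense while its early points are kept out of $U_\alpha$ up to index $f_\alpha(n)$. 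Once the bookkeeping makes the first-hitting functions everywhere-dominate, the argument of the previous paragraph applies verbatim and shows that the constructed countable dense subspace of $2^{\mathfrak{d}}$ is not M-separable, and hence, every S-separable space being M-separable, not S-separable.
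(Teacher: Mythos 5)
The paper itself offers no proof of this statement: it is imported verbatim from \cite[Theorem 2.18]{bella08}, so there is no internal argument to compare yours against, and your attempt must be judged on its own. Judged so, it has a genuine gap. The parts you actually carry out --- the initial-segment trick, the existence of an everywhere-dominating family of size $\mathfrak{d}$, and the implication ``configuration $\Rightarrow$ not M-separable'' --- are the routine parts, and your ``configuration'' is not a reduction of the theorem but a reformulation of it. Indeed, for a fixed countable dense $D=\bigcup_n D_n$ with fixed enumerations $D_n=\{x^{(n)}_m : m\in\mathbb{N}\}$, the existence of open sets whose first-hitting functions everywhere-dominate is \emph{equivalent} to the assertion that $(D_n)$ witnesses failure of M-separability: one direction is your second paragraph, and conversely, if every initial-segment selection $\bigcup_n\{x^{(n)}_m : m\le g(n)\}$ fails to be dense, then for each $g$ in a fixed everywhere-dominating family of size $\mathfrak{d}$ one can choose a nonempty basic open $U_g$ missing it, and the first-hit functions of these $U_g$ everywhere-dominate. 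Hence the entire content of the theorem lies in constructing $D$, the dense pieces $D_n$, and the sets $U_\alpha$ --- exactly the step you yourself label ``the genuinely hard part'' and leave as unspecified ``HMP-type bookkeeping''. A proof that stops where the work begins is not a proof.

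The gap is fillable, and in a way showing that the bookkeeping has a crisp closed form rather than an open-ended interleaving. Index the coordinates of the cube by an everywhere-dominating family $\mathcal{H}=\{h_\alpha : \alpha<\mathfrak{d}\}\subseteq\mathbb{N}^{\mathbb{N}}$, so that $2^{\mathfrak{d}}=2^{\mathcal{H}}$, and put $U_\alpha=\{x\in 2^{\mathcal{H}} : x(h_\alpha)=1\}$. For each $n$ let $\mathcal{C}_n$ be the countable family of finite unions of Baire-space cylinders $[s]$ with $s\in\mathbb{N}^{<\omega}$ of length greater than $n$, and for $C\in\mathcal{C}_n$ let $\mu_n(C)$ be the largest value $s(n)$ over the finitely many cylinders $[s]$ composing $C$. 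Let $D_n$ enumerate the restrictions to $\mathcal{H}$ of the characteristic functions $\chi_C$, $C\in\mathcal{C}_n$, placing $\chi_C$ at some position $m\ge\mu_n(C)$ and padding unused positions with the zero function. Each $D_n$ is dense: given a finite partial function on $\mathcal{H}$, separate its finitely many arguments by sufficiently long cylinders and let $C$ be the union of the cylinders around the arguments sent to $1$. Moreover, if $x^{(n)}_m(h_\alpha)=1$, then $h_\alpha$ lies in some cylinder $[s]$ composing the corresponding $C$, whence $h_\alpha(n)=s(n)\le\mu_n(C)\le m$; so the first-hit function $f_\alpha$ of $U_\alpha$ satisfies $f_\alpha\ge h_\alpha$ pointwise, and $\{f_\alpha : \alpha<\mathfrak{d}\}$ is everywhere-dominating. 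Feeding this into your final argument yields a complete proof; without some such explicit construction, your attempt does not establish the theorem.
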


\begin{Cor}
\label{cor13}
The space $2^\mathfrak{d}$ contains a countable dense subspace which is not S-separable.
\end{Cor}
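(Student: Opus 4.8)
The plan is to read off the statement as an immediate consequence of Theorem~\ref{thm2} together with the basic implication between the two selection principles. Theorem~\ref{thm2}, quoted from \cite[Theorem 2.18]{bella08}, already supplies a countable dense subspace of $2^{\mathfrak{d}}$ that fails to be M-separable; the only additional ingredient needed is that non-M-separability forces non-S-separability.

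First I would apply Theorem~\ref{thm2} to fix a countable dense subspace $Y \subseteq 2^{\mathfrak{d}}$ that is not M-separable. Next I would invoke the elementary implication recorded just after Definition~\ref{def1}, namely that every S-separable space is M-separable. Taking the contrapositive of this implication, any space that is not M-separable is also not S-separable; in particular $Y$ is not S-separable. Since $Y$ is countable and dense in $2^{\mathfrak{d}}$, this yields exactly the claimed subspace.

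I do not expect any genuine obstacle here: all the substantive work is already contained in Theorem~\ref{thm2}, and the corollary is purely a matter of weakening "not M-separable" to "not S-separable" via the one-directional relationship $\text{S-separable}\Rightarrow\text{M-separable}$. The proof is therefore essentially a single sentence, and the only thing to be careful about is citing the correct direction of the implication so that the contrapositive is applied validly.
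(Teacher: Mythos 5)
Your proposal is correct and is exactly the paper's (implicit) argument: Theorem~\ref{thm2} already records the parenthetical ``hence not S-separable,'' which is precisely the contrapositive of the implication S-separable $\Rightarrow$ M-separable noted after Definition~\ref{def1}. Nothing further is needed.
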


\begin{Cor}
\label{cor2}
The smallest $\pi$-weight of a countable non-S-separable space is $\mathfrak{d}$.
\end{Cor}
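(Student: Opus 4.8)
The plan is to read the statement as the assertion that
\[
\min\{\pi w(X) : X \text{ is countable and not S-separable}\} = \mathfrak{d},
\]
and to prove it by establishing two facts: first, that no countable non-S-separable space can have $\pi$-weight strictly below $\mathfrak{d}$ (so the minimum is at least $\mathfrak{d}$), and second, that there is a countable non-S-separable space whose $\pi$-weight equals $\mathfrak{d}$ (so the value $\mathfrak{d}$ is attained).

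For the lower bound I would invoke Theorem~\ref{thm1} directly. If $X$ is any infinite countable space, then every dense subspace of $X$ is countable, whence $\delta(X) = \omega$. Hence Theorem~\ref{thm1} applies: were $\pi w(X) < \mathfrak{d}$, the space $X$ would be S-separable. Contrapositively, every countable non-S-separable space satisfies $\pi w(X) \ge \mathfrak{d}$, which gives the lower bound.

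For attainment I would take the witness $D$ supplied by Corollary~\ref{cor13}: a countable dense subspace of $2^{\mathfrak{d}}$ that is not S-separable. It then remains to compute $\pi w(D) = \mathfrak{d}$. The upper estimate $\pi w(D) \le \mathfrak{d}$ follows from density, since tracing a $\pi$-base of $2^{\mathfrak{d}}$ onto $D$ yields a $\pi$-base for $D$, and $\pi w(2^{\mathfrak{d}}) = \mathfrak{d}$ by the fact recalled from \cite{kkjv}. The matching lower estimate $\pi w(D) \ge \mathfrak{d}$ is exactly the lower bound of the previous paragraph applied to the non-S-separable countable space $D$. Combining the two gives $\pi w(D) = \mathfrak{d}$, so $D$ realizes the value $\mathfrak{d}$ and the minimum is precisely $\mathfrak{d}$.

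The only genuinely delicate point is the evaluation of $\pi w(D)$ for the dense witness, and I would stress that the argument sidesteps any direct combinatorial computation of a $\pi$-base for $D$: the easy trace argument gives ``$\le \mathfrak{d}$'', while the reverse inequality is obtained for free from Theorem~\ref{thm1} via the contrapositive, precisely because $D$ fails to be S-separable. This interplay---using the main sufficient condition in its contrapositive form to bound a $\pi$-weight from below---is the crux, and everything else is routine.
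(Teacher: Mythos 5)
Your proof is correct and takes essentially the same route as the paper: the lower bound comes from the contrapositive of Theorem~\ref{thm1} (noting $\delta(X)=\omega$ for countable $X$), and attainment comes from the countable dense non-S-separable subspace of $2^{\mathfrak{d}}$ given by Corollary~\ref{cor13}. The only difference is that you make explicit the evaluation $\pi w(D)=\mathfrak{d}$ (the trace of a $\pi$-base of $2^{\mathfrak{d}}$ for the upper estimate, the contrapositive for the lower), a detail the paper asserts without elaboration.
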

\begin{proof}
Let $X$ be a countable space. Then $\delta(X) = \omega$. If $\pi w(X) < \mathfrak{d}$, then by Theorem~\ref{thm1}, $X$ is S-separable.

Now by Corollary~\ref{cor13}, $2^\mathfrak{d}$ contains a countable subspace $Y$ such that $\pi w(Y) = \mathfrak{d}$ and $Y$ is not S-separable. Thus the smallest $\pi$-weight of a countable non-S-separable space is $\mathfrak{d}$.
\end{proof}

Let $F$ be a subset of a space $X$. For each $x\in F$ choose an open set $U_x$ of $X$ containing $x$. By a neighbourhood of $F$ we mean the collection $\{U_x : x\in F\}$ and we use $\mathcal{N}(F)$ to denote a neighbourhood of $F$. We now introduce the following notions.
\begin{enumerate}[wide=0pt,leftmargin=*]
\item A space $X$ is said to satisfy $(*_1)$ if for any finite $F\subseteq X$ and any sequence $(Y_n)$ of subsets of $X$ with $F\subseteq \cap_{n\in \mathbb{N}} \overline{Y_n}$ there exists a sequence $(F_n)$ such that for each $n$ $F_n$ is a finite subset of $Y_n$ and for each neighbourhood $\mathcal{N}(F)$ of $F$ there exists a $n\in \mathbb{N}$ such that $U\cap F_n \neq \emptyset$ for all $U\in \mathcal{N}(F)$.
\item A space $X$ is said to satisfy $(*_1)$ with respect to dense subspaces if for any finite $F\subseteq X$ and any sequence $(Y_n)$ of dense subspaces of $X$ there exists a sequence $(F_n)$ such that for each $n$ $F_n$ is a finite subset of $Y_n$ and for each neighbourhood $\mathcal{N}(F)$ of $F$ there exists a $n\in \mathbb{N}$ such that $U\cap F_n \neq \emptyset$ for all $U\in \mathcal{N}(F)$.
\end{enumerate}

It is immediate that if $X$ satisfies $(*_1)$ (respectively, $(*_1)$ with respect to dense subspaces), then $X$ has countable fan tightness (respectively, countable fan tightness with respect to dense subspaces). Also if $X$ satisfies $(*_1)$, then it satisfies $(*_1)$ with respect to dense subspaces, and if $X$ is S-separable, then it satisfies $(*_1)$ with respect to dense subspaces.

\begin{Prop}
\label{prop1}
A separable space $X$ is S-separable if and only if $X$ satisfies $(*_1)$ with respect to dense subspaces.
\end{Prop}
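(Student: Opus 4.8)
The plan is to prove the two implications separately, noting that the forward direction is essentially immediate from the definitions while the reverse direction is where separability does the work.

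For the forward implication, suppose $X$ is S-separable. Given a finite set $F\subseteq X$ and a sequence $(Y_n)$ of dense subspaces, I would simply invoke S-separability to obtain a single sequence $(F_n)$ with each $F_n$ a finite subset of $Y_n$ and the property that every finite collection of nonempty open sets is hit by some $F_n$. Since any neighbourhood $\mathcal{N}(F)$ is itself a finite collection of nonempty open sets (each $U_x$ contains the point $x$), the same sequence $(F_n)$ witnesses $(*_1)$ for $F$. This direction uses neither separability nor the fixed set $F$, and is in fact the observation already recorded just before the statement.

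For the reverse implication, the difficulty is that $(*_1)$ supplies a witnessing sequence $(F_n)$ only after a finite set $F$ has been fixed, whereas S-separability demands one sequence handling all finite families of open sets at once. Separability is precisely what lets me quantify over only countably many test configurations. I would fix a countable dense set $D=\{d_k:k\in\mathbb{N}\}$ and enumerate all finite subsets of $D$ as $\{F^{(j)}:j\in\mathbb{N}\}$. Then partition $\mathbb{N}$ into infinitely many infinite pieces $\{N_j:j\in\mathbb{N}\}$, and for each $j$ apply $(*_1)$ with respect to dense subspaces to the finite set $F^{(j)}$ and the subsequence $(Y_n:n\in N_j)$, obtaining finite sets $F_n\subseteq Y_n$ for $n\in N_j$ with the neighbourhood-hitting property relative to $F^{(j)}$. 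Gluing these over all $j$ yields a single sequence $(F_n)$ with each $F_n$ a finite subset of $Y_n$, which I claim witnesses S-separability.

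The remaining step is to reduce an arbitrary finite family $\mathcal{F}=\{U_1,\dots,U_k\}$ of nonempty open sets to a neighbourhood of one of the $F^{(j)}$. Using density of $D$, choose $d_i\in U_i\cap D$ for each $i$ and set $F=\{d_1,\dots,d_k\}$, a finite subset of $D$, so that $F=F^{(j)}$ for some $j$. To absorb possible coincidences among the $d_i$, for each distinct point $d\in F$ I would take $W_d=\bigcap\{U_i:d_i=d\}$, an open set containing $d$ with $W_d\subseteq U_i$ whenever $d_i=d$; then $\mathcal{N}(F)=\{W_d:d\in F\}$ is a genuine neighbourhood of $F$. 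The defining property of $(*_1)$ for $F^{(j)}$ now yields some $n\in N_j$ with $W_d\cap F_n\neq\emptyset$ for all $d\in F$, and since each $U_i\supseteq W_{d_i}$ this forces $U_i\cap F_n\neq\emptyset$ for all $i$. The containment $W_d\subseteq U_i$ is exactly what makes the implication run in the right direction, and the main obstacle is the bookkeeping that organizes the countably many finite subsets of $D$ against a partition of $\mathbb{N}$; once that is in place the verification is routine.
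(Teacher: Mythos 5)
Your proof is correct and takes essentially the same route as the paper's: enumerate the finite subsets of a countable dense set, partition $\mathbb{N}$ into infinite blocks, apply $(*_1)$ with respect to dense subspaces once per finite subset on its own block, and then reduce an arbitrary finite family $\mathcal{F}$ of nonempty open sets to a neighbourhood of the finite set of points chosen from the dense set. Your intersection trick $W_d=\bigcap\{U_i : d_i=d\}$ even tidies up a point the paper glosses over (when distinct open sets receive the same chosen point, $\mathcal{F}$ is not literally a neighbourhood in the paper's one-open-set-per-point sense), but this is a minor refinement rather than a different approach.
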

\begin{proof}
Suppose that $X$ satisfies $(*_1)$ with respect to dense subspaces. Let $(Y_n)$ be a sequence of dense subspaces of $X$. Let $\{N_n : n\in \mathbb{N}\}$ be a partition of $\mathbb{N}$ into pairwise disjoint infinite subsets. Since $X$ is separable, we have a countable dense subspace $Y$ of $X$. Choose $\fin (Y) = \{G_n : n\in \mathbb{N}\}$. Fix $n\in \mathbb{N}$. Apply the property $(*_1)$ with respect to dense subspaces of $X$ to $(Y_m : m\in N_n)$ to obtain a sequence $(F_m : m\in N_n)$ such that for each $m\in N_n$, $F_m$ is a finite subset of $Y_m$ and for each neighbourhood $\mathcal{N}(G_n)$ of $G_n$ there exists a $m\in N_n$ such that $U\cap F_m \neq \emptyset$ for all $U\in \mathcal{N}(G_n)$. We claim that the sequence $(F_n)$ witnesses for $(Y_n)$ that $X$ is S-separable. Let $\mathcal{F}$ be a finite collection of nonempty open sets of $X$. For each $U \in \mathcal{F}$ we can pick a $x_U \in U \cap Y$ and let $G$ be the collection of all such $x_U$. Then $G = G_k$ for some $k\in \mathbb{N}$ and $\mathcal{F}$ is a neighbourhood of $G_k$. This gives a $m \in N_k$ such that $U \cap F_m \neq \emptyset$ for all $U\in \mathcal{F}$. Thus $X$ is S-separable.

The other direction is trivial.
\end{proof}

\begin{Lemma}
\label{lemma101}
If $X$ satisfies $(*_1)$, then every subspace of $X$ also satisfies $(*_1)$.
\end{Lemma}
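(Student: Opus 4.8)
The plan is to deduce the property for an arbitrary subspace directly from the ambient property $(*_1)$ of $X$, using only the relationship between the subspace topology and the topology of $X$. First I would fix a subspace $Z$ of $X$, a finite set $F\subseteq Z$, and a sequence $(Y_n)$ of subsets of $Z$ with $F\subseteq \cap_{n\in\mathbb{N}}\Cl_Z(Y_n)$, where $\Cl_Z$ denotes closure taken in $Z$. The first step is to observe that closure can only shrink under passing to a subspace: since $\Cl_Z(Y_n)=\Cl_X(Y_n)\cap Z$ and $F\subseteq Z$, we obtain $F\subseteq \cap_{n\in\mathbb{N}}\Cl_X(Y_n)$. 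Hence $(Y_n)$ is a legitimate sequence of subsets of $X$ accumulating at $F$ in $X$, and the hypothesis of $(*_1)$ for $X$ applies.

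Next I would apply $(*_1)$ for $X$ to $F$ and $(Y_n)$ to obtain a sequence $(F_n)$ such that for each $n$ $F_n$ is a finite subset of $Y_n$ (and therefore a finite subset of $Z$), with the property that for every neighbourhood of $F$ in $X$ some $n$ realizes the required intersection condition. I claim this same sequence $(F_n)$ witnesses $(*_1)$ for $Z$. The crucial point, and \emph{the only step requiring care}, is the translation of neighbourhoods between $Z$ and $X$: a neighbourhood $\mathcal{N}_Z(F)=\{V_x : x\in F\}$ of $F$ in $Z$ consists of relatively open sets, so each $V_x=U_x\cap Z$ for some open set $U_x$ of $X$ containing $x$; then $\mathcal{N}_X(F)=\{U_x : x\in F\}$ is a genuine neighbourhood of $F$ in $X$.

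Finally I would invoke the conclusion of $(*_1)$ for this lifted neighbourhood $\mathcal{N}_X(F)$ to get an $n$ with $U_x\cap F_n\neq\emptyset$ for all $x\in F$, and then use $F_n\subseteq Z$ to descend again: $U_x\cap F_n=(U_x\cap Z)\cap F_n=V_x\cap F_n$, so $V_x\cap F_n\neq\emptyset$ for every $x\in F$, which is exactly the condition demanded for $\mathcal{N}_Z(F)$. Thus $(F_n)$ witnesses $(*_1)$ in $Z$. I do not expect any substantive obstacle here; the entire content reduces to the bookkeeping identity $\Cl_Z(Y)=\Cl_X(Y)\cap Z$, the fact that relatively open sets of $Z$ lift to open sets of $X$, and the observation that $F_n\subseteq Z$ makes intersecting with $Z$ harmless in both directions.
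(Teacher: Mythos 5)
Your proposal is correct and follows essentially the same route as the paper's own proof: pass from relative closures to closures in $X$, apply $(*_1)$ in $X$ to get the sequence $(F_n)$, lift a relative neighbourhood of $F$ to a neighbourhood in $X$, and descend using the fact that each $F_n\subseteq Y_n$ lies in the subspace. The paper leaves the identity $\overline{Y_n}^Z=\overline{Y_n}^X\cap Z$ and the final intersection bookkeeping implicit, but the argument is the same.
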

\begin{proof}
Let $Y$ be a subspace of $X$. Let $F$ be a finite subset of $Y$ and $(Y_n)$ be sequence of subsets of $Y$ such that $F\subseteq \overline{Y_n}^Y$ for all $n$. Clearly $F\subseteq \overline{Y_n}$ for all $n$. Since $X$ satisfies $(*_1)$, there exists a sequence $(F_n)$ such that for each $n$ $F_n$ is a finite subset of $Y_n$ and for every neighbourhood of $\mathcal{N}(F)$ of $F$ in $X$ there exists a $n\in \mathbb{N}$ such that $U\cap F_n\neq \emptyset$ for all $U\in \mathcal{N}(F)$. Observe that the sequence $(F_n)$ witnesses for $F$ and $(Y_n)$ that $Y$ satisfies $(*_1)$. Let $\mathcal{N}(F)$ be a neighbourhood of $F$ in $Y$. Then for each $V\in \mathcal{N}(F)$ we can find an open $U(V)$ set in $X$ such that $V = U(V) \cap Y$. Then $\{U(V) : V\in \mathcal{N}(F)\}$ is a neighbourhood of $F$ in $X$. This gives a $m\in \mathbb{N}$ such that $U(V) \cap F_m \neq \emptyset$ for all $V \in \mathcal{N}(F)$. It follows that $V\cap F_m \neq \emptyset$ for all $V\in \mathcal{N}(F)$. Thus $Y$ satisfies $(*_1)$.
\end{proof}

\begin{Th}[{\cite{arhan86,bella09},\cite[II.2.2. Theorem]{arhan92}}]
\label{thm3}
For a space $X$ the following assertions are equivalent.
\begin{enumerate}[wide=0pt,label={\upshape(\arabic*)},leftmargin=*]
  \item $C_p(X)$ has countable fan tightness.
  \item $C_p(X)$ has countable fan tightness with respect to dense subspaces.
  \item Every finite power of $X$ is Menger.
\end{enumerate}
\end{Th}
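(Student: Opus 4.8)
The plan is to verify the cycle $(1)\Rightarrow(2)\Rightarrow(3)\Rightarrow(1)$, using throughout that $C_p(X)$ is a topological group under pointwise addition and hence homogeneous. This reduces countable fan tightness, and its dense version, to the single point $\mathbf 0$ (the zero function): translations carry dense subspaces to dense subspaces and basic neighbourhoods of any $f$ to basic neighbourhoods of $\mathbf 0$, so it suffices to test the clustering of $\bigcup_n F_n$ at $\mathbf 0$. With this, $(1)\Rightarrow(2)$ is immediate, since a dense subspace $D$ satisfies $\mathbf 0\in\overline D$; thus a sequence of dense subspaces is a special case of a sequence clustering at $\mathbf 0$, and any witness for $(1)$ is a witness for $(2)$.

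For $(3)\Rightarrow(1)$ I would run Arhangel'skii's dictionary between functions and covers of finite powers (see \cite{arhan92}). Fix $\mathbf 0\in\bigcap_n\overline{Y_n}$ and, for $f\in C_p(X)$ and $\epsilon>0$, put $U(f,\epsilon)=\{x\in X:|f(x)|<\epsilon\}$. For fixed $k$ and $\epsilon$ the family $\{U(f,\epsilon)^{k}:f\in Y_n\}$ is an open cover of $X^{k}$, because $\mathbf 0\in\overline{Y_n}$ supplies, for each $k$-tuple, a function small on its (finite) coordinate set. Partitioning $\mathbb N$ into infinite pieces $M_{k,m}$ indexed by pairs $(k,m)$ and applying the Menger property of $X^{k}$ to the covers indexed by $M_{k,m}$ with $\epsilon=1/m$ yields finite subfamilies, hence finite sets $F_n\subseteq Y_n$, whose union covers each $X^{k}$. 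Reading the covering back through the dictionary gives, for every $K=\{x_1,\dots,x_k\}$ and every $\delta>0$, a function $f\in\bigcup_nF_n$ with $|f(x_i)|<\delta$ for all $i$, i.e.\ $\mathbf 0\in\overline{\bigcup_nF_n}$.

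The substantive new implication is $(2)\Rightarrow(3)$, following \cite{bella09}. I would first reduce $(3)$ to the statement that $X$ satisfies $\mathrm S_{\mathrm{fin}}(\Omega,\Omega)$ --- for every sequence of $\omega$-covers (open covers in which every finite subset of $X$ lies inside a member) there are finite subfamilies whose union is again an $\omega$-cover --- via the standard equivalence of $\mathrm S_{\mathrm{fin}}(\Omega,\Omega)$ with Mengerness of all finite powers. Given $\omega$-covers $(\mathcal U_n)$, the key device is to encode each $\mathcal U_n$ as the dense subspace $D_n=\{f\in C_p(X):f\restriction(X\setminus U)\equiv 1\text{ for some }U\in\mathcal U_n\}$. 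Density holds because, for any finite $K$ and prescribed rational targets, one chooses $U\in\mathcal U_n$ with $K\subseteq U$ and builds, via Tychonoff bump functions supported inside $U$, a function equal to $1$ off $U$ and matching the targets on $K$. Applying $(2)$ to $(D_n)$ produces finite $F_n\subseteq D_n$ with $\mathbf 0\in\overline{\bigcup_nF_n}$; letting $\mathcal V_n=\{U_f:f\in F_n\}$ collect the witnessing members (so $f\equiv 1$ off $U_f$), the neighbourhood $\{g:|g|<1/2\text{ on }K\}$ forces some $f\in\bigcup_nF_n$ to be below $1/2$ on $K$, whence $K\subseteq U_f\in\bigcup_n\mathcal V_n$. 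Thus $\bigcup_n\mathcal V_n$ is an $\omega$-cover, establishing $\mathrm S_{\mathrm{fin}}(\Omega,\Omega)$ and hence $(3)$.

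I expect the main obstacle to be precisely this $(2)\Rightarrow(3)$ step. Since every dense subspace clusters at $\mathbf 0$, mere proximity to $\mathbf 0$ is too cheap to transport covering data, so the difficulty is to arrange that density and cover-encoding coexist. The resolution is the choice $f\equiv 1$ off a cover member: it makes the dense sets $D_n$ genuinely dense (the $\omega$-cover lets one realize any finite condition) while making ``$f$ small on $K$'' equivalent to ``$K$ sits inside the witnessing member $U_f$'', which is exactly the information needed to extract a covering subfamily. A secondary point one must not gloss over is the combinatorial equivalence $\mathrm S_{\mathrm{fin}}(\Omega,\Omega)\Leftrightarrow$ ``all finite powers Menger''; I would either invoke it as the cited black box or prove it by the usual $\omega$-cover--to--product translation.
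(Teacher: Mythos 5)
Your proposal is correct, but there is nothing in the paper to compare it against: the paper does not prove this theorem, it imports it as a known result, citing \cite{arhan86,bella09} and \cite[II.2.2]{arhan92}. What you have written is, in effect, a correct reconstruction of the proofs in those sources. Your $(1)\Rightarrow(2)$ is the trivial implication (a dense subspace clusters at every point, so a sequence of dense subspaces is a special case of the hypothesis of countable fan tightness; homogeneity is not even needed there). Your $(3)\Rightarrow(1)$ is exactly Arhangel'ski\u{i}'s dictionary argument: the covers $\{U(f,1/m)^{k} : f\in Y_n\}$ of $X^{k}$, the partition of $\mathbb{N}$ into infinite pieces $M_{k,m}$, Menger applied piecewise, and the translation back to clustering at $\mathbf{0}$ all check out. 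Your $(2)\Rightarrow(3)$ --- encoding an $\omega$-cover $\mathcal{U}_n$ by the dense set $D_n$ of functions identically $1$ off some member of $\mathcal{U}_n$, so that ``$|f|<1/2$ on $K$'' becomes ``$K\subseteq U_f$'' --- is the Bella--Bonanzinga--Matveev/Scheepers technique, and delegating the equivalence of $\mathrm{S}_{\mathrm{fin}}(\Omega,\Omega)$ with ``all finite powers Menger'' to \cite{coc2} is legitimate, since that is a standard cited result. It is worth noting that the paper's own proof of Theorem~\ref{thm4} (its $(*_1)$ analogue of this statement) runs the same dictionary in the hard direction with Scheepers in place of Menger, so your argument is structurally the template the paper itself follows there. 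Two minor points to tighten: adopt the convention $X\notin\mathcal{U}_n$ for $\omega$-covers so that $\bigcup_n\mathcal{V}_n$ is literally an $\omega$-cover, and in the density argument for $D_n$ spell out the bump-function construction (pairwise disjoint neighbourhoods of the finitely many points of $K$ inside $U$, Tychonoff functions supported there); both are routine.
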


\begin{Th}
\label{thm4}
For a space $X$ the following assertions are equivalent.
\begin{enumerate}[wide=0pt,label={\upshape(\arabic*)},leftmargin=*]
  \item $C_p(X)$ satisfies $(*_1)$.
  \item $C_p(X)$ satisfies $(*_1)$ with respect to dense subspaces.
  \item Every finite power of $X$ is Menger.
  \item Every finite power of $X$ is Scheepers.
\end{enumerate}
\end{Th}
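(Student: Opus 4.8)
The plan is to prove the four statements equivalent by running the cycle $(1)\Rightarrow(2)\Rightarrow(3)\Leftrightarrow(4)\Rightarrow(1)$, where only the return arrow carries real content. The arrow $(1)\Rightarrow(2)$ is free: it is the observation recorded just before Proposition~\ref{prop1} that $(*_1)$ implies $(*_1)$ with respect to dense subspaces, applied to the space $C_p(X)$. For $(2)\Rightarrow(3)$ I would again use that same paragraph, namely that $(*_1)$ with respect to dense subspaces forces countable fan tightness with respect to dense subspaces; hence $C_p(X)$ has countable fan tightness with respect to dense subspaces, and Theorem~\ref{thm3} $((2)\Leftrightarrow(3))$ then gives that every finite power of $X$ is Menger.

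For $(3)\Leftrightarrow(4)$ I would quote the classical characterization of Just, Miller, Scheepers and Szeptycki: a space $Z$ satisfies the Scheepers property $\Sf(\Omega,\Omega)$ (here $\Omega$ denotes the family of $\omega$-covers) if and only if every finite power of $Z$ is Menger. Since $\Sf(\Omega,\Omega)$ is preserved by finite powers (the finite powers of $X^n$ are themselves finite powers of $X$), assuming $(3)$ gives that each $X^n$ has all finite powers Menger, hence each $X^n$ is Scheepers, which is $(4)$; conversely $(4)$ makes each $X^n$ Scheepers and therefore Menger, which is $(3)$.

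The substance is the arrow $(4)\Rightarrow(1)$, and I note $(3)$ already makes $X$ itself satisfy $\Sf(\Omega,\Omega)$, so it is really $\Sf(\Omega,\Omega)$ of $X$ that does the work. Let $F=\{f_1,\dots,f_k\}\subseteq C_p(X)$ and let $(Y_n)$ be a sequence of subsets of $C_p(X)$ with $F\subseteq\bigcap_n\overline{Y_n}$. First I would split $\mathbb{N}$ into infinitely many infinite pieces $\{N_m:m\in\mathbb{N}\}$, reserving $N_m$ to handle accuracy $1/m$. Fixing $m$ and working over $n\in N_m$, for $g\in Y_n$ and each $i$ put $O^{i}_{g}=\{x\in X:|g(x)-f_i(x)|<1/m\}$; the hypothesis $f_i\in\overline{Y_n}$ makes $\{O^{i}_{g}:g\in Y_n\}$ an $\omega$-cover of $X$ for every $i$. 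The key device is to pass to tuples: for $\bar g=(g_1,\dots,g_k)\in Y_n^{k}$ set $O_{\bar g}=\bigcap_{i=1}^{k}O^{i}_{g_i}$, and observe that $\{O_{\bar g}:\bar g\in Y_n^{k}\}$ is again an $\omega$-cover of $X$, since a finite $A\subseteq X$ can be placed inside $O^{i}_{g_i}$ one coordinate at a time. Applying $\Sf(\Omega,\Omega)$ to the sequence $(\{O_{\bar g}:\bar g\in Y_n^{k}\})_{n\in N_m}$ of $\omega$-covers, I extract finite $T_n\subseteq Y_n^{k}$ so that $\bigcup_{n\in N_m}\{O_{\bar g}:\bar g\in T_n\}$ is an $\omega$-cover of $X$, and I let $F_n$ be the finite set of all coordinates of tuples in $T_n$.

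To see that $(F_n)$ witnesses $(*_1)$, take any neighbourhood $\mathcal N(F)=\{U_i:1\le i\le k\}$ with $U_i\ni f_i$; choosing a basic $B(f_i,A_i,\epsilon_i)\subseteq U_i$ and then setting $A=\bigcup_i A_i$ and picking $m$ with $1/m\le\min_i\epsilon_i$ gives $B(f_i,A,1/m)\subseteq U_i$ for a common $A$ and $m$. Since the selected family over $N_m$ is an $\omega$-cover, some $n\in N_m$ and some $\bar g\in T_n$ satisfy $A\subseteq O_{\bar g}$, that is $A\subseteq O^{i}_{g_i}$ for every $i$; hence for each $i$ the coordinate $g_i\in F_n$ lies in $B(f_i,A,1/m)\subseteq U_i$, so $U_i\cap F_n\neq\emptyset$ for all $i$ with a single $n$, which is exactly $(*_1)$. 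The main obstacle is precisely this simultaneity: $(*_1)$ demands one index $n$ whose $F_n$ meets all $k$ neighbourhoods at once, which Menger-level fan tightness alone does not deliver. The tuple set $O_{\bar g}$ is the mechanism that fuses the $k$ separate $\omega$-covers into a single $\omega$-cover, so that the Scheepers selection $\Sf(\Omega,\Omega)$ returns a common witness $n$; this is also why the correct covering hypothesis here is the Scheepers (equivalently, all-finite-powers-Menger) property rather than bare Menger.
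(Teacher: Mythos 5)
Your proposal is correct, and while the routine arrows coincide with the paper's, your proof of the essential implication $(4)\Rightarrow(1)$ takes a genuinely different route. The paper follows Arhangel'ski\u{i}'s technique from \cite[Theorem II.2.2]{arhan92}: for each accuracy $1/k$ it builds open covers $\{U_x^{(n,k)} : x\in X^k\}$ of the finite power $X^k$ (a single cover serves all $f\in F$ because the neighbourhoods $U_{f,x}^{(n,k)}$ are intersected over $f\in F$, while the selected witnesses $g_{f,x}^{(n,k)}\in Y_n$ vary with $f$), applies the union-form Scheepers property $\Uf(\mathcal{O},\Omega)$ of $X^k$ to make the unions of finite subfamilies an $\omega$-cover of $X^k$, and glues the selections diagonally via $F_{f,n}=\bigcup_{k\le n}F_{f,n}^{(k)}$; in the verification the finite point set $F'=\{z_1,\dotsc,z_m\}$ is encoded as the single point $z\in X^m$. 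You instead stay inside $X$: you fuse the $k$ separate $\omega$-covers $\{O_g^i : g\in Y_n\}$ into a single $\omega$-cover indexed by tuples $\bar g\in Y_n^k$, invoke $\Sf(\Omega,\Omega)$ of $X$ itself (equivalent to (3) by \cite{coc2}), and manage accuracies by partitioning $\mathbb{N}$ into blocks $N_m$ rather than by the paper's diagonal union. The two devices are dual: the paper pushes the finiteness of the point set $F'$ into a power of $X$ and handles the finiteness of $F$ by intersecting neighbourhoods, whereas you push the finiteness of $F$ into a power of $Y_n$ (the tuples) and handle the point set $A$ directly through the $\omega$-cover property. Your version buys an argument that never manipulates covers of $X^k$, delegating all finite-power combinatorics to the Just--Miller--Scheepers--Szeptycki equivalence, and it makes transparent that the engine is $\Sf(\Omega,\Omega)$ of $X$; the paper's version stays closer to the classical fan-tightness proof and needs only the weaker $\Uf$-form of the Scheepers property of the powers. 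One shared (and harmless) gloss: in both proofs a constructed family may contain the whole space as a member and then fails the strict definition of an $\omega$-cover, but such members only make the required witnessing easier, so neither argument is damaged.
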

\begin{proof}
$(2) \Rightarrow (3)$. If $C_p(X)$ satisfies $(*_1)$ with respect to dense subspaces, then $C_p(X)$ has countable fan tightness with respect to dense subspaces. By Theorem~\ref{thm3}, every finite power of $X$ is Menger.

$(3) \Rightarrow (4)$. Let $k\in \mathbb{N}$ and $Y = X^k$. Since every finite power of $X$ is Menger, every finite power of $Y$ is also Menger. It follows that $Y$ is Scheepers (see \cite{coc2}). Thus every finite power of $X$ is Scheepers.

$(4) \Rightarrow (1)$. We will follow the technique of the proof of \cite[Theorem II.2.2]{arhan92} to prove this implication. Let $F$ be a finite subset of $C_p(X)$ and $(Y_n)$ be a sequence of subsets of $C_p(X)$ such that $F\subseteq \cap_{n\in \mathbb{N}} \overline{Y_n}$. Let $k \in \mathbb{N}$ be fixed. Pick a $f\in F$. Since $f\in \cap_{n\in \mathbb{N}} \overline{Y_n}$, for each $n\in \mathbb{N}$ and each $x = (x_1, x_2, \dotsc, x_k) \in X^k$ there exists a $g_{f,x}^{(n,k)} \in Y_n$ such that $|g_{f,x}^{(n,k)}(x_i) - f(x_i)| < \frac{1}{k}$ for all $1\leq i \leq k$. Since $g_{f,x}^{(n,k)}$ and $f$ are continuous, for each $1 \leq i \leq k$ we can choose an open set $U_{f,i,x}^{(n,k)}$ of $X$ containing $x_i$ such that $|g_{f,x}^{(n,k)}(y) - f(y)| < \frac{1}{k}$ for all $y \in U_{f,i,x}^{(n,k)}$. Then $U_{f,x}^{(n,k)} = U_{f,1,x}^{(n,k)}\times U_{f,2,x}^{(n,k)}\times \dotsc \times U_{f,k,x}^{(n,k)}$ is an open set in $X^k$ containing $x$ and $|g_{f,x}^{(n,k)}(y_i) - f(y_i)| < \frac{1}{k}$ for all $y = (y_1, y_1, \dotsc, y_k) \in U_{f,x}^{(n,k)}$. Choose $U_x^{(n,k)} = \cap_{f\in F} U_{f,x}^{(n,k)}$. Clearly $|g_{f,x}^{(n,k)}(y_i) - f(y_i)| < \frac{1}{k}$ for all $y = (y_1, y_1, \dotsc, y_k) \in U_x^{(n,k)}$ and for all $f\in F$. For each $n \in \mathbb{N}$, $\mathcal{U}_n^{(k)} = \{U_x^{(n,k)} : x\in X^k\}$ is an open cover of $X^k$. Apply the Scheepers property of $X^k$ to $(\mathcal{U}_n^{(k)})$ to obtain a sequence $(\mathcal{V}_n^{(k)})$ such that for each $n$ $\mathcal{V}_n^{(k)}$ is a finite subset of $\mathcal{U}_n^{(k)}$ and $\{\cup \mathcal{V}_n^{(k)} : n\in \mathbb{N}\}$ is an $\omega$-cover of $X^k$. Then we can obtain a sequence $(A_n^{(k)})$ of finite subsets of $X^k$ such that for each $n$ $\mathcal{V}_n^{(k)} = \{U_x^{(n,k)} : x\in A_n^{(k)}\}$. For each $n\in \mathbb{N}$ and each $f\in F$, $F_{f,n}^{(k)} = \{g_{f,x}^{(n,k)} : x\in A_n^{(k)}\}$ is a finite subset of $Y_n$. For each $n\in \mathbb{N}$ and each $f\in F$ let $F_{f,n} = \cup_{k\leq n} F_{f,n}^{(k)}$. Also for each $n$ let $F_n = \cup_{f\in F} F_{f,n}$. Thus we define a sequence $(F_n)$ such that for each $n$ $F_n$ is a finite subset of $Y_n$. Observe that the sequence $(F_n)$ witnesses for $F$ and $(Y_n)$ that $C_p(X)$ satisfies $(*_1)$. Let $\mathcal{N}(F) = \{U_f : f\in F\}$ be a neighbourhood of $F$ in $C_p(X)$. Then we get a finite subset $F^\prime = \{z_1, z_2, \dotsc, z_m\}$ of $X$ and an $\epsilon > 0$ such that $B(f, F^\prime, \epsilon) \subseteq U_f$ for all $f\in F$. We can assume that $\frac{1}{m} < \epsilon$. Since $z = (z_1, z_2, \dotsc, z_m) \in X^m$, there exists a $p \geq m$ such that $z\in \cup \mathcal{V}_p^{(m)}$. If possible suppose that there exists a $f\in F$ such that $B(f, F^\prime, \epsilon) \cap F_{f,p} = \emptyset$. It follows that $B(f, F^\prime, \epsilon) \cap F_{f,p}^{(i)} = \emptyset$ for all $i \leq p$, i.e. $B(f, F^\prime, \epsilon) \cap F_{f,p}^{(m)} = \emptyset$. Subsequently $g_{f,x}^{(p,m)} \notin B(f, F^\prime, \epsilon)$ for all $x \in A_{p}^{(m)}$. Then for each $x\in A_p^{(m)}$, $|g_{f,x}^{(p,m)}(z_{i_x}) - f(z_{i_x})| \geq \epsilon > \frac{1}{m}$ for some $1 \leq i_x \leq m$. Consequently $z \notin U_x^{(p,m)}$ for all $x \in A_p^{(m)}$, i.e. $z \notin \cup \mathcal{V}_p^{(m)}$. Which is absurd. Thus $B(f, F^\prime, \epsilon) \cap F_{f,p}\neq \emptyset$ and so $B(f, F^\prime, \epsilon) \cap F_p\neq \emptyset$. It follows that $U_f \cap F_p \neq \emptyset$ for all $f\in F$. Hence $C_p(X)$ satisfies $(*_1)$.
\end{proof}

Combining Theorems~\labelcref{thm3,thm4} we obtain the following result.
\begin{Th}
\label{thm5}
For a space $X$ the following assertions are equivalent.
\begin{enumerate}[wide=0pt,label={\upshape(\arabic*)},leftmargin=*]
  \item $C_p(X)$ satisfies $(*_1)$.
  \item $C_p(X)$ has countable fan tightness.
  \item $C_p(X)$ satisfies $(*_1)$ with respect to dense subspaces.
  \item $C_p(X)$ has countable fan tightness with respect to dense subspaces.
  \item Every finite power of $X$ is Menger.
  \item Every finite power of $X$ is Scheepers.
\end{enumerate}
\end{Th}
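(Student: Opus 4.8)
The plan is to recognize that the six conditions listed here are nothing more than the union of the condition lists of Theorems~\ref{thm3} and~\ref{thm4}, glued together along the single assertion ``every finite power of $X$ is Menger'' that both theorems contain. Consequently no new topological argument is required: the entire content of the proof is a matching of labels followed by an appeal to the two equivalences already established.

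First I would pair off the assertions explicitly. Conditions $(2)$, $(4)$, and $(5)$ of the present theorem are, word for word, conditions $(1)$, $(2)$, and $(3)$ of Theorem~\ref{thm3}; hence that theorem yields $(2)\Leftrightarrow(4)\Leftrightarrow(5)$. Likewise, conditions $(1)$, $(3)$, $(5)$, and $(6)$ here are, word for word, conditions $(1)$, $(2)$, $(3)$, and $(4)$ of Theorem~\ref{thm4}; hence that theorem yields $(1)\Leftrightarrow(3)\Leftrightarrow(5)\Leftrightarrow(6)$.

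To finish, I would observe that condition $(5)$, ``every finite power of $X$ is Menger'', lies in both chains. Therefore the two equivalence clusters share a common member and must coincide, giving the equivalence of all six assertions. The only point requiring any care — the sole ``obstacle'', such as it is — is to confirm that this pivot condition is stated identically in both source theorems (it is item $(3)$ in each), so that the two clusters genuinely overlap and can be merged into a single equivalence class. Everything beyond this verification is routine bookkeeping.
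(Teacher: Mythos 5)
Your proposal is correct and is exactly the paper's own argument: the paper states Theorem~\ref{thm5} with the remark ``Combining Theorems~\ref{thm3} and~\ref{thm4} we obtain the following result,'' which is precisely your label-matching via the shared pivot condition that every finite power of $X$ is Menger. Your explicit verification of the correspondence of the six assertions with the items of Theorems~\ref{thm3} and~\ref{thm4} is accurate, so nothing further is needed.
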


\begin{Cor}
\label{cor3}
If every finite power of $X$ is Menger, then every separable subspace of $C_p(X)$ is S-separable.
\end{Cor}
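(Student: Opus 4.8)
The plan is to route the argument through the hereditary property $(*_1)$ rather than attempting to work with S-separability directly. This detour is forced by the permanence behavior established earlier: S-separability is only known to be preserved by dense and open subspaces (Proposition~\ref{prop3}), whereas an arbitrary separable subspace of $C_p(X)$ need be neither dense nor open. The property $(*_1)$, by contrast, passes to \emph{all} subspaces, so it serves as the right hereditary surrogate.

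First I would invoke Theorem~\ref{thm5}: the hypothesis that every finite power of $X$ is Menger is equivalent to $C_p(X)$ satisfying $(*_1)$. This converts the covering assumption on $X$ into a selection property of the function space $C_p(X)$. Next comes the crucial step, Lemma~\ref{lemma101}, which states that $(*_1)$ is hereditary: since $C_p(X)$ satisfies $(*_1)$, every subspace of $C_p(X)$ satisfies $(*_1)$ as well. In particular, any separable subspace $Z\subseteq C_p(X)$ satisfies $(*_1)$. This is exactly the point at which the choice of $(*_1)$ pays off, since S-separability would not survive this passage to a subspace.

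Finally, I would use the observation recorded immediately before Proposition~\ref{prop1} that a space satisfying $(*_1)$ automatically satisfies $(*_1)$ with respect to dense subspaces. Thus $Z$ is a separable space that satisfies $(*_1)$ with respect to dense subspaces, and Proposition~\ref{prop1} then yields that $Z$ is S-separable, which is what we wanted.

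I expect no genuine obstacle here: the proof is a chaining of previously established equivalences and permanence facts, with no computation of its own. The only conceptual content is recognizing that one must pass through the hereditary version $(*_1)$ and its dense-subspace variant, rather than arguing about S-separability on $C_p(X)$ and trying to restrict it; the real work was already carried out in Theorem~\ref{thm4} (hence Theorem~\ref{thm5}), in Lemma~\ref{lemma101}, and in Proposition~\ref{prop1}.
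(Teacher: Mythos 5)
Your proposal is correct and follows exactly the paper's own argument: Theorem~\ref{thm5} to get $(*_1)$ for $C_p(X)$, Lemma~\ref{lemma101} to pass $(*_1)$ down to the separable subspace, and Proposition~\ref{prop1} to convert back to S-separability. Your only addition is making explicit the (implicit in the paper) intermediate step that $(*_1)$ implies $(*_1)$ with respect to dense subspaces, which is a harmless clarification rather than a different route.
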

\begin{proof}
Let $Y$ be a separable subspace of $C_p(X)$. Suppose that every finite power of $X$ is Menger. Then by Theorem~\ref{thm5}, $C_p(X)$ satisfies $(*_1)$. Also by Lemma~\ref{lemma101}, $Y$ satisfies $(*_1)$. Thus by Proposition~\ref{prop1}, $Y$ is S-separable.
\end{proof}

From \cite[2.3.1]{arhan78} we now consider a space $V$ called countable Fr\'{e}chet-Urysohn fan. Since $V$ has a dense subspace of isolated points, $\pi w(V) = \omega$. By Theorem~\ref{thm1}, $V$ is S-separable. But $V$ does not have countable fan tightness, i.e. $V$ does not satisfy $(*_1)$. However, we observe in Theorem~\ref{thm7} that $C_p(X)$ is S-separable implies that $C_p(X)$ satisfies $(*_1)$. We first recall the following well known result which will be used subsequently.

\begin{Th}[{\cite{noble74} (see also \cite[Theorem I.1.5]{arhan92})}]
\label{thm6}
For a Tychonoff space $X$, $iw(X) = d(C_p(X))$.
\end{Th}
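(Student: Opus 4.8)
The plan is to prove the two inequalities $iw(X)\le d(C_p(X))$ and $d(C_p(X))\le iw(X)$ separately, after first recasting $iw(X)$ in functional terms. Specifically, I would first record the standard reformulation that (for infinite $\kappa$) one has $iw(X)\le\kappa$ if and only if there is a family $\mathcal F\subseteq C(X)$ with $|\mathcal F|\le\kappa$ that separates the points of $X$. Indeed, any point-separating $\mathcal F$ generates a weak topology $\tau_{\mathcal F}\subseteq\tau$ whose natural subbase $\{f^{-1}(I): f\in\mathcal F,\ I\text{ a rational interval}\}$ has size $\le\kappa$, so $w((X,\tau_{\mathcal F}))\le\kappa$, and $\tau_{\mathcal F}$ is Tychonoff because the diagonal map $x\mapsto (f(x))_{f\in\mathcal F}$ embeds $(X,\tau_{\mathcal F})$ into $\mathbb R^{\mathcal F}$; conversely a coarser Tychonoff topology of weight $\kappa$ embeds $X$ into $[0,1]^\kappa$, and the $\kappa$ coordinate functions are $\tau$-continuous and separate points.

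For $iw(X)\le d(C_p(X))$, I would take any dense $D\subseteq C_p(X)$ and show it separates points. Given $x\neq y$, the Tychonoff property yields $h\in C(X)$ with $h(x)=0$ and $h(y)=1$; density of $D$ in the basic neighbourhood $B(h,\{x,y\},\tfrac13)$ produces $g\in D$ with $g(x)<\tfrac13$ and $g(y)>\tfrac23$, so $g(x)\neq g(y)$. Hence $D$ is point-separating, and by the reformulation $iw(X)\le|D|$; taking the infimum over dense $D$ gives $iw(X)\le d(C_p(X))$.

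The reverse inequality $d(C_p(X))\le iw(X)$ is where the real work lies. I would fix a point-separating family $\mathcal F=\{f_\alpha:\alpha<\kappa\}$ with $\kappa=iw(X)$ and let $A$ be the set of all $q(f_{\alpha_1},\dots,f_{\alpha_n})$, where $q$ ranges over polynomials in finitely many variables with rational coefficients and $\alpha_1,\dots,\alpha_n<\kappa$. Then $A$ is a $\mathbb Q$-subalgebra of $C(X)$ containing the rational constants, and counting finite tuples from $\kappa$ against countably many rational polynomials gives $|A|\le\kappa$ (for infinite $\kappa$). The crux is to prove $A$ is dense in $C_p(X)$: given $f\in C(X)$, a finite set $F=\{x_1,\dots,x_m\}$ and $\varepsilon>0$, the restriction $g\mapsto g|_F$ sends $A$ onto a subalgebra of $\mathbb R^F$ that contains the constants and separates the (distinct) points of $F$, since some $f_\alpha$ distinguishes any two of them. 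By the finite-dimensional Stone--Weierstrass theorem the closure of such a subalgebra is all of $\mathbb R^F\cong\mathbb R^m$, so some $g\in A$ satisfies $|g(x_i)-f(x_i)|<\varepsilon$ for all $i$, i.e. $g\in B(f,F,\varepsilon)$. Thus $A$ is dense and $d(C_p(X))\le|A|\le\kappa$.

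I expect the interpolation step to be the main obstacle: one must check that the rational polynomial algebra genuinely separates the finitely many points of $F$ (this is exactly where point-separation of $\mathcal F$ enters) and that passing to the topological closure of the $\mathbb Q$-subalgebra $A|_F$ legitimately produces a unital, point-separating \emph{real} subalgebra of $\mathbb R^F$ to which Stone--Weierstrass applies. The cardinal bookkeeping $|A|\le\kappa$ and the degenerate finite-$\kappa$ cases (where $X$ collapses to finitely many points and both sides are finite) also need a quick separate verification. Combining the two inequalities yields $iw(X)=d(C_p(X))$.
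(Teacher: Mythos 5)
The paper does not prove this theorem at all: it is quoted from Noble \cite{noble74} and Arhangel'ski\u{i} \cite[Theorem I.1.5]{arhan92}, so there is no internal proof to compare against. Your argument is correct and is in substance the classical proof from those cited sources: $iw(X)\le d(C_p(X))$ because a dense subset $D$ of $C_p(X)$ separates points (equivalently, the diagonal map condenses $X$ into $\mathbb{R}^{D}$), and $d(C_p(X))\le iw(X)$ because the rational-coefficient polynomial algebra generated by a point-separating family of size $iw(X)$ is pointwise dense. The only stylistic deviation is in the second inequality: the standard proof embeds the condensed copy of $X$ into a cube $[0,1]^{\kappa}$ and invokes Stone--Weierstrass uniformly on the cube, whereas you invoke it in its finite-dimensional form on $\mathbb{R}^{F}$ for each finite $F\subseteq X$; both are sound, and you correctly handle the one delicate point, namely that the closure of the $\mathbb{Q}$-subalgebra $A|_F$ is a unital point-separating \emph{real} subalgebra of $\mathbb{R}^{F}$ (closure under real scalars follows from density of $\mathbb{Q}$ in $\mathbb{R}$ and continuity of the algebra operations). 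The degenerate finite cases you flag at the end disappear under the usual convention that cardinal functions such as $iw$ and $d$ take values $\ge\omega$.
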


\begin{Th}
\label{thm7}
For a Tychonoff space $X$ the following assertions are equivalent.
\begin{enumerate}[wide=0pt,label={\upshape(\arabic*)},leftmargin=*]
  \item $C_p(X)$ is separable and satisfies $(*_1)$.
  \item $C_p(X)$ is separable and satisfies $(*_1)$ with respect to dense subspaces.
  \item $C_p(X)$ is S-separable.
  \item $C_p(X)$ is M-separable.
  \item $iw(X) = \omega$ and every finite power of $X$ is Menger.
\end{enumerate}
\end{Th}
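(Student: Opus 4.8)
The plan is to prove the five conditions equivalent by closing the cycle
$(5) \Rightarrow (1) \Rightarrow (2) \Rightarrow (3) \Rightarrow (4) \Rightarrow (5)$, leaning on Theorems~\ref{thm3}, \ref{thm5} and \ref{thm6}, on Proposition~\ref{prop1}, and on the elementary implications recorded earlier in this section (every S-separable space is M-separable, and $(*_1)$ implies $(*_1)$ with respect to dense subspaces).

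For the forward arrows I would argue as follows. For $(5) \Rightarrow (1)$, assume $iw(X) = \omega$; then Theorem~\ref{thm6} gives $d(C_p(X)) = iw(X) = \omega$, so $C_p(X)$ is separable, while the hypothesis that every finite power of $X$ is Menger yields, via Theorem~\ref{thm5}, that $C_p(X)$ satisfies $(*_1)$. The step $(1) \Rightarrow (2)$ is immediate, since $(*_1)$ entails $(*_1)$ with respect to dense subspaces and separability is unaffected. For $(2) \Rightarrow (3)$ I would quote Proposition~\ref{prop1}: a separable space is S-separable precisely when it satisfies $(*_1)$ with respect to dense subspaces, and $C_p(X)$ is separable by hypothesis. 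Finally $(3) \Rightarrow (4)$ is the general fact that S-separability implies M-separability.

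The only arrow carrying any idea is $(4) \Rightarrow (5)$. Suppose $C_p(X)$ is M-separable. Applying the definition to the constant sequence $Y_n = C_p(X)$ produces finite sets $F_n$ with $\bigcup_n F_n$ dense, so $C_p(X)$ is separable and hence $iw(X) = \omega$ by Theorem~\ref{thm6}. For the Menger conclusion the key observation is that M-separability already forces countable fan tightness with respect to dense subspaces: given any $f \in C_p(X)$ and any sequence $(Y_n)$ of dense subspaces, M-separability supplies finite $F_n \subseteq Y_n$ with $\bigcup_n F_n$ dense, whence $f \in \overline{\bigcup_n F_n}$ is automatic. Theorem~\ref{thm3} then converts countable fan tightness with respect to dense subspaces into the assertion that every finite power of $X$ is Menger, completing $(4) \Rightarrow (5)$ and the cycle.

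I expect the main conceptual point---rather than a genuine obstacle---to be recognizing that countable fan tightness \emph{with respect to dense subspaces} is a far weaker demand than ordinary countable fan tightness: the target point $f$ need only lie in the closure of the union, which comes for free once that union is dense. This is exactly why the Fr\'echet--Urysohn fan can be S-separable while failing ordinary countable fan tightness, and it is what lets $(4) \Rightarrow (5)$ proceed without any appeal to homogeneity of $C_p(X)$. The remaining care is purely bookkeeping: verifying that separability is genuinely in hand wherever Proposition~\ref{prop1} and Theorem~\ref{thm3} are invoked, which holds because each of $(1)$--$(4)$ independently guarantees $d(C_p(X)) = \omega$.
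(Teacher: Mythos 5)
Your proof is correct, and it follows the same cycle $(1)\Rightarrow(2)\Rightarrow(3)\Rightarrow(4)\Rightarrow(5)\Rightarrow(1)$ as the paper, but two of the arrows are handled differently, and in both cases your route is more self-contained. For $(2)\Rightarrow(3)$ the paper first invokes Theorem~\ref{thm4} to extract ``every finite power of $X$ is Menger'' and then applies Corollary~\ref{cor3}; you instead apply Proposition~\ref{prop1} directly, which is exactly tailored to this situation (separable $+$ $(*_1)$ with respect to dense subspaces $\Rightarrow$ S-separable) and avoids the detour through Menger powers --- note that the paper's own path ultimately rests on Proposition~\ref{prop1} anyway, via the proof of Corollary~\ref{cor3}. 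For $(4)\Rightarrow(5)$ the paper simply cites \cite[Theorem 21]{bella09}, whereas you reprove it: M-separability applied to the constant sequence $Y_n = C_p(X)$ gives separability, hence $iw(X)=\omega$ by Theorem~\ref{thm6}; and your observation that M-separability trivially yields countable fan tightness with respect to dense subspaces (the union of the selected finite sets is dense, so \emph{every} point lies in its closure) feeds into Theorem~\ref{thm3} to give the Menger conclusion. This unpacking is essentially the argument behind the cited result in \cite{bella09}, so what your version buys is a proof that leans only on results already stated in the paper (Theorems~\ref{thm3} and \ref{thm6}) rather than on an external theorem; what the paper's version buys is brevity. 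All steps check out, including the separability bookkeeping you flag at the end.
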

\begin{proof}
The implications $(1) \Rightarrow (2)$ and $(3) \Rightarrow (4)$ are obvious. The implication $(4) \Rightarrow (5)$ was shown in \cite[Theorem 21]{bella09}.

$(2) \Rightarrow (3)$. By Theorem~\ref{thm4}, every finite power of $X$ is Menger. Then by Corollary~\ref{cor3}, $C_p(X)$ is S-separable.

$(5) \Rightarrow (1)$. By Theorem~\ref{thm5}, $C_p(X)$ satisfies $(*_1)$. Since $iw(X) = \omega$, by Theorem~\ref{thm6}, $d(C_p(X)) = \omega$. It follows that $C_p(X)$ is separable.
\end{proof}

\begin{Cor}
\label{cor4}
If $C_p(X)$ is S-separable, then for each $n \in \mathbb{N}$, $C_p(X^n)$ is S-separable, and hence $(C_p(X))^\omega$ is S-separable.
\end{Cor}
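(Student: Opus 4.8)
The plan is to push everything through the characterization in Theorem~\ref{thm7}, which tells us that for a Tychonoff space $W$, $C_p(W)$ is S-separable if and only if $iw(W)=\omega$ and every finite power of $W$ is Menger. Since $C_p(X)$ is assumed S-separable, Theorem~\ref{thm7} hands us the two facts I will keep reusing: $iw(X)=\omega$ and every finite power of $X$ is Menger.

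First I would handle $C_p(X^n)$ by verifying the two conditions of Theorem~\ref{thm7} for the space $X^n$. The Menger condition is immediate, since every finite power $(X^n)^k=X^{nk}$ of $X^n$ is itself a finite power of $X$, hence Menger by hypothesis. For the $i$-weight condition I would unwind $iw(X)=\omega$ into a condensation (continuous bijection) $X\to M$ onto a second countable, hence separable metrizable, space $M$; taking the $n$-th power of this map gives a condensation $X^n\to M^n$ onto the second countable space $M^n$, so $iw(X^n)=\omega$. Theorem~\ref{thm7} applied to $X^n$ then yields that $C_p(X^n)$ is S-separable.

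For $(C_p(X))^\omega$ I would invoke Theorem~\ref{thm13} with the constant sequence $X_k=C_p(X)$, so that the finite initial products are $\prod_{k=1}^{n}X_k=(C_p(X))^n$; it then suffices to show each $(C_p(X))^n$ is S-separable. The key point here is the standard $C_p$-identity $C_p\big(\bigoplus_{i}Z_i\big)\cong\prod_i C_p(Z_i)$, which gives $(C_p(X))^n\cong C_p(X_{(n)})$ for the finite topological sum $X_{(n)}=\bigoplus_{i=1}^{n}X$. I would then check the Theorem~\ref{thm7} conditions for $X_{(n)}$: its $k$-th power is a finite disjoint sum of copies of $X^k$, so it is Menger because the Menger property is preserved by finite topological sums, and $iw(X_{(n)})=\omega$ because a finite disjoint sum of condensations onto second countable spaces is again a condensation onto a second countable space. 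Hence each $(C_p(X))^n$ is S-separable, and Theorem~\ref{thm13} delivers that $(C_p(X))^\omega$ is S-separable.

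The main thing to watch is that $C_p(X^n)$ and $(C_p(X))^n$ are genuinely different spaces, the former being $C_p$ of a power and the latter a power of $C_p$, so the second assertion cannot simply be read off from the first; the correct bridge is the homeomorphism $(C_p(X))^n\cong C_p(\bigoplus_{i=1}^n X)$ with the \emph{disjoint sum} rather than the power. Beyond that, the only technical inputs are the routine permanence of $iw=\omega$ and of the Menger property under finite products and finite topological sums, which I expect to be the least delicate part of the argument.
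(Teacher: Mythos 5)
Your proof is correct, and for the first assertion (that each $C_p(X^n)$ is S-separable) it is exactly the paper's argument: pull $iw(X)=\omega$ and the Menger property of all finite powers out of Theorem~\ref{thm7}, observe that $X^n$ inherits both, and apply Theorem~\ref{thm7} again. Where you genuinely diverge is the second assertion. The paper simply writes ``hence $(C_p(X))^\omega$ is S-separable'' with no argument at all, leaving the reader to supply the bridge; you supply it explicitly, and correctly: invoke Theorem~\ref{thm13} for the constant sequence, reduce to the finite powers $(C_p(X))^n$, and identify $(C_p(X))^n\cong C_p\bigl(\bigoplus_{i=1}^n X\bigr)$, checking the Theorem~\ref{thm7} conditions for the finite sum (Menger is preserved by finite powers of finite sums since such powers decompose into finitely many clopen copies of $X^k$, and a finite sum of condensations onto second countable spaces is a condensation onto a second countable space). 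Your explicit warning that $C_p(X^n)$ and $(C_p(X))^n$ are different spaces, so the second claim does not follow formally from the first, identifies precisely the step the paper elides. One remark: there is an even shorter route the authors may have intended, bypassing Theorem~\ref{thm13} entirely --- write $(C_p(X))^\omega\cong C_p\bigl(\bigoplus_{n\in\omega} X\bigr)$ and verify Theorem~\ref{thm7} for the countable sum directly, using that the Menger property is preserved by countable unions and that a countable sum of second countable spaces is second countable; your route via Theorem~\ref{thm13} buys the same conclusion while only needing permanence of Menger under finite unions.
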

\begin{proof}
Let $n \in \mathbb{N}$ and $Y = X^n$. Since $C_p(X)$ is S-separable, $iw(X) = \omega$ and every finite power of $X$ is Menger. It follows that every finite power of $Y$ is also Menger and $iw(Y) = \omega$. Thus $C_p(Y)$ is S-separable. It follows that for each $n \in \mathbb{N}$, $C_p(X^n)$ is S-separable, and hence $(C_p(X))^\omega$ is S-separable.
\end{proof}

\begin{Cor}
\label{cor5}
If $X$ is a second countable space such that $C_p(X)$ is S-separable, then $C_p(X)$ is hereditarily S-separable.
\end{Cor}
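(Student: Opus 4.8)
The plan is to invoke the structural characterization in Proposition~\ref{prop2}, which splits hereditary S-separability into two conditions: hereditary separability together with S-separability of every countable subspace. I would verify each condition separately for the space $C_p(X)$, using the two hypotheses (second countability of $X$ and S-separability of $C_p(X)$) one for each condition.

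First, since $X$ is second countable it is separable metrizable, and so $C_p(X)$ is hereditarily separable by the fact recalled in the Preliminaries (for a separable metrizable space $X$, $C_p(X)$ is hereditarily separable). This immediately gives the first of the two conditions. Second, because $C_p(X)$ is S-separable, Theorem~\ref{thm7} tells us that every finite power of $X$ is Menger (and that $iw(X)=\omega$, though only the Menger property is needed in what follows). Corollary~\ref{cor3} then yields that every \emph{separable} subspace of $C_p(X)$ is S-separable. The key observation—and the only point requiring any attention—is that every countable subspace is in particular separable, so Corollary~\ref{cor3} applies to each countable subspace; hence every countable subspace of $C_p(X)$ is S-separable, which is the second condition.

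With both conditions of Proposition~\ref{prop2} verified for $C_p(X)$, the implication $(2)\Rightarrow(1)$ there delivers that $C_p(X)$ is hereditarily S-separable. I do not expect a genuine obstacle here: the result is an assembly of Theorem~\ref{thm7}, Corollary~\ref{cor3}, and Proposition~\ref{prop2}. The closest thing to a substantive step is recognizing that countable subspaces fall under the scope of Corollary~\ref{cor3} by being separable, so that the separability hypothesis in that corollary is automatically met.
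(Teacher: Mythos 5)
Your proof is correct, and it assembles the result in a slightly different way from the paper. The paper's proof takes an arbitrary subspace $Y$ of $C_p(X)$, notes it is separable (since $C_p(X)$ is hereditarily separable over a separable metrizable domain), and then verifies S-separability of $Y$ directly through the $(*_1)$ machinery: Theorem~\ref{thm7} gives that $C_p(X)$ satisfies $(*_1)$, Lemma~\ref{lemma101} transfers $(*_1)$ to $Y$, and Proposition~\ref{prop1} converts separability plus $(*_1)$ into S-separability. You instead use the decomposition of Proposition~\ref{prop2} (hereditary separability plus S-separability of countable subspaces), obtaining the second condition from Theorem~\ref{thm7} followed by Corollary~\ref{cor3} applied to countable (hence separable) subspaces. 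Since Corollary~\ref{cor3} is itself proved by exactly the $(*_1)$--Lemma~\ref{lemma101}--Proposition~\ref{prop1} chain, the underlying mathematics coincides; the difference is that your argument is purely an assembly of stated results, while the paper inlines the $(*_1)$ argument. One small remark: your detour through Proposition~\ref{prop2} is not actually needed --- since every subspace of $C_p(X)$ is separable by hereditary separability, Corollary~\ref{cor3} already applies to \emph{every} subspace, not just the countable ones, and yields hereditary S-separability directly. Both routes are valid; yours trades a shorter dependency chain for an extra (harmless) reduction to countable subspaces.
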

\begin{proof}
Let $Y$ be a subspace of $C_p(X)$. Since $X$ is a second countable space, it is separable metrizable. It follows that $C_p(X)$ is hereditarily separable and hence $Y$ is separable. Since $C_p(X)$ is S-separable, $C_p(X)$ satisfies $(*_1)$. By Lemma~\ref{lemma101}, $Y$ satisfies $(*_1)$ and so does $(*_1)$ with respect to dense subspaces. Also by Proposition~\ref{prop1}, $Y$ is S-separable. Thus $C_p(X)$ is hereditarily S-separable.
\end{proof}

\begin{Cor}
\label{cor6}
If $X$ is a second countable space with $|X|< \mathfrak{d}$, then $C_p(X)$ is hereditarily S-separable.
\end{Cor}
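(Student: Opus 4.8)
The plan is to reduce everything to the already-established characterisation of S-separability of $C_p(X)$ together with the hereditary result just proved. Since $X$ is second countable, Corollary~\ref{cor5} tells us that once we know $C_p(X)$ is S-separable, it is automatically hereditarily S-separable. So it suffices to prove that $C_p(X)$ is S-separable, and for this I would invoke Theorem~\ref{thm7}: it is enough to verify the two conditions of item $(5)$, namely that $iw(X)=\omega$ and that every finite power of $X$ is Menger.

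The first condition is immediate. As $X$ is second countable we have $w(X)=\omega$, and since $iw(X)\le w(X)$ always (the topology itself is an admissible coarser Tychonoff topology), we get $iw(X)=\omega$.

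The substantive point is the second condition, and this is where the hypothesis $|X|<\mathfrak d$ enters. For each $n$ the power $X^n$ is again second countable, and $|X^n|<\mathfrak d$ (for infinite $X$ this is just $|X|^n=|X|<\mathfrak d$, and for finite $X$ it is trivial). I would then appeal to the classical fact that a second countable space of cardinality less than $\mathfrak d$ is Menger — equivalently, that the least cardinality of a non-Menger second countable space is exactly $\mathfrak d$. If one prefers a self-contained argument, fix a countable base $\{B_k\}$ of $X^n$; given a sequence $(\mathcal U_m)$ of open covers, for each $m$ let $\mathcal W_m=\{B_k : B_k\subseteq U \text{ for some } U\in\mathcal U_m\}$ and enumerate it as $\{B^m_j : j\in\mathbb N\}$, and to each point $x$ associate $g_x\in\mathbb N^\mathbb N$ with $g_x(m)=\min\{j : x\in B^m_j\}$; since $|X^n|<\mathfrak d$ the family $\{g_x\}$ is not dominating, so some $h$ satisfies $g_x(m)<h(m)$ for infinitely many $m$ for every $x$, and the finite families $\mathcal V_m\subseteq\mathcal U_m$ consisting of members containing some $B^m_j$ with $j<h(m)$ witness the Menger property. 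The main obstacle is precisely this Menger estimate; everything else is bookkeeping, and I expect to simply cite it.

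Finally, having verified $iw(X)=\omega$ and the Mengerness of all finite powers, Theorem~\ref{thm7} yields that $C_p(X)$ is S-separable, and then Corollary~\ref{cor5} upgrades this to hereditary S-separability, completing the proof.
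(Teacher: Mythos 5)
Your proof is correct, and it rests on the same two pillars as the paper's own argument: the fact from \cite{coc2} that a second countable space of cardinality less than $\mathfrak{d}$ is Menger (applied to each power $X^n$, using $|X^n|=|X|<\mathfrak{d}$ for infinite $X$), and the paper's $C_p$-theory tying Menger finite powers to S-separability. Where you differ is only the decomposition: you treat Corollary~\ref{cor5} and Theorem~\ref{thm7} as black boxes and compose them, whereas the paper never invokes Corollary~\ref{cor5} here --- it re-runs that argument inline, taking an arbitrary subspace $Y$ of $C_p(X)$, observing that $Y$ is separable because $C_p(X)$ is hereditarily separable ($X$ being separable metrizable), deriving $(*_1)$ for $C_p(X)$ from the Menger finite powers via Theorem~\ref{thm5}, transferring $(*_1)$ to $Y$ by Lemma~\ref{lemma101}, and finishing with Proposition~\ref{prop1}. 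Your packaging is more economical and loses nothing, since Corollary~\ref{cor5} was proved immediately beforehand by exactly that inline argument; the paper's version only makes the role of $(*_1)$ explicit. Two minor points: your reduction $iw(X)\le w(X)=\omega$ is exactly right, and your optional self-contained proof of the Menger estimate via non-dominating families is sound, with the single cosmetic fix that in forming $\mathcal{V}_m$ you should select \emph{one} member of $\mathcal{U}_m$ for each index $j<h(m)$ so that $\mathcal{V}_m$ is genuinely finite.
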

\begin{proof}
Let $Y$ be a subspace of $C_p(X)$. Since $X$ is a second countable space, it is separable metrizable. It follows that $C_p(X)$ is hereditarily separable and hence $Y$ is separable. Also every finite power of $X$ is second countable and so every finite power of $X$ is Lindel\"{o}f. Since $|X|< \mathfrak{d}$, every finite power of $X$ is Menger (see \cite{coc2}). It follows that $C_p(X)$ satisfies $(*_1)$ as $iw(X) = d(C_p(X)) = \omega$. Then $Y$ satisfies $(*_1)$ and so does $(*_2)$ with respect to dense subspaces. Also by Proposition~\ref{prop1}, $Y$ is S-separable. Thus $C_p(X)$ is hereditarily S-separable.
\end{proof}

Using Theorem~\ref{thm7} and \cite[Corollary 2.13]{bella08} the following result can be easily verified.
\begin{Prop}
\label{prop8}
The space $C_p(C_p(X))$ is S-separable if and only if $X$ is finite.
\end{Prop}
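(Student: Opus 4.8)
The plan is to read the statement off from Theorem~\ref{thm7} by taking the Tychonoff space $C_p(X)$ in the role of the base space. First I would note that $C_p(X)$ is itself Tychonoff, so Theorem~\ref{thm7} applies to it verbatim: the space $C_p(C_p(X))$ is S-separable if and only if it is M-separable, if and only if $iw(C_p(X)) = \omega$ and every finite power of $C_p(X)$ is Menger. The cleanest route (and presumably what makes the verification "easy") is then to combine this with \cite[Corollary 2.13]{bella08}: the former gives that $C_p(C_p(X))$ is S-separable exactly when it is M-separable, while the latter characterizes M-separability of $C_p(C_p(X))$ by the finiteness of $X$. Chaining the two equivalences yields the claim.

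If instead I unfold condition (5), the forward direction runs as follows. Assuming $C_p(C_p(X))$ is S-separable, I would extract from Theorem~\ref{thm7} that every finite power of $C_p(X)$ is Menger; in particular the first power $C_p(X)$ is Menger. By the result recalled in the preliminaries, namely \cite[Theorem II.2.10]{arhan92} that $C_p(X)$ is Menger precisely when $X$ is finite, this already forces $X$ to be finite. Thus a single instance of the Menger condition suffices for this direction.

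For the converse, supposing $X$ is finite, I would observe that $C_p(X)$ is homeomorphic to $\mathbb{R}^{|X|}$ and hence second countable, so $iw(C_p(X)) = \omega$; moreover every finite power of $C_p(X)$ is again a finite-dimensional Euclidean space, which is $\sigma$-compact and therefore Menger. Condition (5) of Theorem~\ref{thm7} is met, so $C_p(C_p(X))$ is S-separable. I do not expect any genuine obstacle here; the only points requiring care are the bookkeeping of substituting $C_p(X)$ for the base space in Theorem~\ref{thm7} and the recognition that the first-power instance of the Menger property is already enough to conclude that $X$ is finite.
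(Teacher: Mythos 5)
Your first paragraph is exactly the paper's intended argument: the paper proves this proposition precisely by combining Theorem~\ref{thm7} (applied to the Tychonoff space $C_p(X)$, giving S-separability of $C_p(C_p(X))$ equivalent to its M-separability) with \cite[Corollary 2.13]{bella08}. Your additional unfolding via condition (5) — using that $C_p(X)$ Menger forces $X$ finite by \cite[Theorem II.2.10]{arhan92}, and that $X$ finite makes $C_p(X)\cong\mathbb{R}^{|X|}$ with all finite powers $\sigma$-compact — is also correct and has the mild advantage of replacing the citation of Corollary 2.13 by a fact already recalled in the paper's preliminaries, but it is the same route with the details filled in.
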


\begin{Rem}\rm
\label{rem1}
\hfill
\begin{enumerate}[wide=0pt,label={\upshape(\arabic*)},ref={\theRem(\arabic*)}]
\item\label{rem101} Recall that the Baire space $\mathbb{N}^\mathbb{N}$ can be condensed onto a compact space $X$. It follows that $C_p(X)$ can be densely embedded in $C_p(\mathbb{N}^\mathbb{N})$. Now $C_p(X)$ is separable, i.e. $d(C_p(X)) = \omega$ and so $iw(X) = \omega$ (see Theorem~\ref{thm6}). Since $X$ is compact, every finite power of $X$ is Menger and hence by Theorem~\ref{thm7}, $C_p(X)$ is S-separable. Let $Y$ be a countable dense subspace of $C_p(X)$. Then $Y$ is S-separable. Since $C_p(X)$ is densely embeddable in $C_p(\mathbb{N}^\mathbb{N})$, $Y$ is a countable dense S-separable subspace of $C_p(\mathbb{N}^\mathbb{N})$. Also since $C_p(\mathbb{N}^\mathbb{N})$ can be densely embedded in the Tychonoff cube $\mathbb{I}^\mathfrak{c}$, $Y$ is a countable dense S-separable subspace of $\mathbb{I}^\mathfrak{c}$. Thus $\mathbb{I}^\mathfrak{c}$ has a countable dense S-separable subspace.

\item\label{rem102} Moreover, we have already mentioned in Example~\ref{ex2} that $\mathbb{I}^\mathfrak{c}$ has a countable dense non-S-separable subspace.
\end{enumerate}
\end{Rem}

\begin{Th}
\label{thm8}
For a zero-dimensional space $X$ the following assertions are equivalent.
\begin{enumerate}[wide=0pt,label={\upshape(\arabic*)},leftmargin=*]
  \item $C_p(X, \mathbb{Q})$ is S-separable.
  \item $C_p(X, \mathbb{Z})$ is S-separable.
  \item $C_p(X, 2)$ is S-separable.
  \item $iw(X) = \omega$ and every finite power of $X$ is Menger.
\end{enumerate}
\end{Th}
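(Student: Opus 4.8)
The condition (4) is precisely condition (5) of Theorem~\ref{thm7}, so by the equivalence $(3)\Leftrightarrow(5)$ there, (4) holds if and only if $C_p(X)$ is S-separable. The plan is to prove the chain $(4)\Rightarrow(1)\Rightarrow(2)\Rightarrow(3)\Rightarrow(4)$. For $(4)\Rightarrow(1)$ I would first observe that, since $X$ is zero-dimensional, $C_p(X,\mathbb{Q})$ is dense in $C_p(X)$: given $f\in C(X)$, a finite $F=\{x_1,\dots,x_k\}$ and $\epsilon>0$, one separates the $x_i$ by pairwise disjoint clopen sets and defines a $\mathbb{Q}$-valued step function agreeing with $f$ to within $\epsilon$ on $F$. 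Thus if $C_p(X)$ is S-separable, Proposition~\ref{prop301} shows that its dense subspace $C_p(X,\mathbb{Q})$ is S-separable.

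For $(1)\Rightarrow(2)\Rightarrow(3)$ the idea is to realize the smaller target as an open continuous image of the larger one. Using the zero-dimensionality of the targets, I would build locally constant retractions $r:\mathbb{Q}\to\mathbb{Z}$ and $s:\mathbb{Z}\to 2$ (for $r$, cut $\mathbb{Q}$ at irrational points $c_m\in(m,m+1)$ and send the clopen slab $(c_{m-1},c_m)\cap\mathbb{Q}$ to $m$; for $s$, send $\{n\le 0\}$ to $0$ and $\{n\ge 1\}$ to $1$). These induce continuous surjections $r_*:C_p(X,\mathbb{Q})\to C_p(X,\mathbb{Z})$ and $s_*:C_p(X,\mathbb{Z})\to C_p(X,2)$, $g\mapsto r\circ g$, which are retractions, since they fix the smaller function space. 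The key point is that $r_*$ and $s_*$ are \emph{open}: the image of a basic neighbourhood $\{g:|g(x_i)-g_0(x_i)|<\epsilon\}$ is exactly $\{h:h(x_i)\in S_i\}$ for suitable finite $S_i$, the inclusion $\supseteq$ being the content, and it is verified by extending any prescribed finite value-pattern to a continuous function on $X$ (assign the prescribed values on small clopen neighbourhoods of the $x_i$ inside the clopen level sets $h^{-1}(m)$, using zero-dimensionality). With $r_*,s_*$ open continuous surjections, Proposition~\ref{prop401} yields $(1)\Rightarrow(2)\Rightarrow(3)$.

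It remains to prove the crucial implication $(3)\Rightarrow(4)$. Since S-separability implies separability (taking singleton families $\mathcal F=\{U\}$ in Definition~\ref{def1} shows the union of the witnessing finite sets is dense), $C_p(X,2)$ is separable, and I would deduce $iw(X)=\omega$ from the correspondence between $C_p(X,2)$ and the clopen algebra of $X$: a countable dense subset of $C_p(X,2)$ is a countable family of clopen sets separating points (for $x\ne y$, pick clopen $C$ with $x\in C\not\ni y$; the nonempty open set $\{h:h(x)=1,h(y)=0\}$ meets the dense set), so $X$ condenses onto a second-countable zero-dimensional space. This is the analogue of Theorem~\ref{thm6} for the target $2$. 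For the Menger part, S-separability of $C_p(X,2)$ implies it satisfies $(*_1)$ with respect to dense subspaces, hence has countable fan tightness with respect to dense subspaces; I would then run the zero-dimensional analogue of Theorem~\ref{thm3}, translating open covers of $X^k$ into subsets of $C_p(X,2)$ clustering at the constant function $\mathbf 0$ (via clopen-valued functions vanishing off the chosen cover members) and converting the finite sets produced by fan tightness back into finite subfamilies witnessing the Menger property of $X^k$.

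The main obstacle is this last step, the $C_p(X,2)$-analogue of Theorem~\ref{thm3}: unlike the real-valued case, the target $2$ is discrete, so the standard Arhangel'skii translation between fan tightness and Menger-type covering must be rebuilt out of clopen sets, and this is exactly where zero-dimensionality of $X$ is indispensable. A secondary technical point is the careful verification that the retraction-induced maps $r_*$ and $s_*$ are open; both rely on the same mechanism of extending finite value-patterns across clopen partitions of $X$.
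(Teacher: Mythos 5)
Your skeleton coincides with the paper's: the same cycle $(4)\Rightarrow(1)\Rightarrow(2)\Rightarrow(3)\Rightarrow(4)$, essentially the same composition maps $f\mapsto r\circ f$ and $f\mapsto s\circ f$ (cutting $\mathbb{Q}$ at irrational points, collapsing $\mathbb{Z}$ to $2$) giving open continuous surjections $C_p(X,\mathbb{Q})\to C_p(X,\mathbb{Z})\to C_p(X,2)$ handled by Proposition~\ref{prop401}, and the same density-plus-Proposition~\ref{prop301} argument for $(4)\Rightarrow(1)$ via Theorem~\ref{thm7}. The divergence is at $(3)\Rightarrow(4)$, and that is where your proposal has a genuine gap. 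The paper disposes of this implication in one line: S-separability implies M-separability, and for zero-dimensional $X$ the equivalence of M-separability of $C_p(X,2)$ with ``$iw(X)=\omega$ and every finite power of $X$ is Menger'' is exactly \cite[Proposition 25]{bella09}, which it cites. You instead undertake to reprove that result and leave its core --- the translation between covering properties of the finite powers of $X$ and selection properties of $C_p(X,2)$ --- as an acknowledged ``main obstacle'', so the implication is not actually established in your write-up.

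Moreover, the sketch you give of that translation has a concrete flaw: you propose to feed ``subsets of $C_p(X,2)$ clustering at the constant function $\mathbf 0$'' into countable fan tightness \emph{with respect to dense subspaces}. But that property, like S- and M-separability, applies only to dense subspaces, and the sets built from an open cover of $X^k$ by functions vanishing off cover members are far from dense in $C_p(X,2)$ --- they merely cluster at $\mathbf 0$. The repair is to work with $\omega$-covers of $X$ rather than open covers of $X^k$: for an $\omega$-cover $\mathcal U$, the set $D_{\mathcal U}=\{f\in C_p(X,2): f=1 \text{ off some } U\in\mathcal U\}$ \emph{is} dense, since given a finite $F\subseteq X$ and prescribed $0/1$ values on $F$ one picks $U\in\mathcal U$ with $F\subseteq U$ and a clopen $C\subseteq U$ containing exactly the prescribed-$0$ points of $F$, and takes $f=\chi_{X\setminus C}$ (this is where zero-dimensionality enters). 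Applying M-separability to the dense sets $D_{\mathcal U_n}$ and collecting, for each selected $f$, a witness $U_f\in\mathcal U_n$ with $f^{-1}(0)\subseteq U_f$, one checks (by meeting the nonempty open sets $\{h: h\upharpoonright E=0\}$, $E\subseteq X$ finite) that the chosen $U_f$ form an $\omega$-cover, so $X$ satisfies $\Sf(\Omega,\Omega)$, equivalently all finite powers of $X$ are Menger \cite{coc2}. Your clopen-algebra argument for $iw(X)=\omega$ is fine. With this correction --- or simply with the citation the paper uses --- your proof closes up.
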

\begin{proof}
$(1) \Rightarrow (2)$. For each $n \in \mathbb{Z}$ pick an irrational $x_n \in (n, n+1)$. Choose a $\psi \in \mathbb{Z}^\mathbb{Q}$ with $\psi(x) = n$ if $x\in (x_{n-1}, x_n)$. Let us define a mapping $\Phi : C_p(X, \mathbb{Q}) \to C_p(X, \mathbb{Z})$ by $\Phi(f) = \psi \circ f$ for all $f \in C_p(X, \mathbb{Q})$. Clearly $\Phi$ is open continuous from $C_p(X, \mathbb{Q})$ onto $C_p(X, \mathbb{Z})$. By Proposition~\ref{prop401}, $C_p(X, \mathbb{Z})$ is S-separable.

$(2) \Rightarrow (3)$. Pick a $\phi \in 2^\mathbb{Z}$ satisfying
\[\phi(n) = \begin{cases}
             0, & \mbox{if } n< 0 \\
             1, & \mbox{otherwise}.
            \end{cases}\]
Let us define a mapping $\Psi : C_p(X, \mathbb{Z}) \to C_p(X, 2)$ by $\Psi(f) = \phi \circ f$ for all $f \in C_p(X, \mathbb{Z})$. Now $\Psi$ is open continuous from $C_p(X, \mathbb{Z})$ onto $C_p(X, 2)$. By Proposition~\ref{prop401}, $C_p(X, 2)$ is S-separable.

$(3) \Rightarrow (4)$. Since $C_p(X, 2)$ is S-separable, it is M-separable. By \cite[Proposition 25]{bella09}, $iw(X) = \omega$ and every finite power of $X$ is Menger.

$(4) \Rightarrow (1)$. By Theorem~\ref{thm7}, $C_p(X)$ is S-separable. Since $X$ is zero-dimensional, $C_p(X, \mathbb{Q})$ is dense in $C_p(X)$. By Proposition~\ref{prop301}, $C_p(X, \mathbb{Q})$ is S-separable.
\end{proof}

With the help of \cite[Proposition 25]{bella09} the following corollary can be obtained.
\begin{Cor}
\label{cor9}
For a zero-dimensional space $X$ the following assertions are equivalent.
\begin{enumerate}[wide=0pt,label={\upshape(\arabic*)},leftmargin=*]
  \item $C_p(X, \mathbb{Q})$ is S-separable.
  \item $C_p(X, \mathbb{Q})$ is M-separable.
  \item $C_p(X, \mathbb{Z})$ is S-separable.
  \item $C_p(X, \mathbb{Z})$ is M-separable.
  \item $C_p(X, 2)$ is S-separable.
  \item $C_p(X, 2)$ is M-separable.
  \item $iw(X) = \omega$ and every finite power of $X$ is Menger.
\end{enumerate}
\end{Cor}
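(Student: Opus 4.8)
The plan is to reduce everything to Theorem~\ref{thm8}, which already establishes that the three S-separability assertions $(1)$, $(3)$, $(5)$ are mutually equivalent and equivalent to the combinatorial condition $(7)$. Since the remaining content concerns only the three M-separability assertions $(2)$, $(4)$, $(6)$, the strategy is to wedge each of these into the established equivalence chain using two generic facts: the trivial implication ``S-separable $\Rightarrow$ M-separable'' in one direction, and \cite[Proposition 25]{bella09} in the other.

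First I would record from Theorem~\ref{thm8} the equivalences $(1)\Leftrightarrow(3)\Leftrightarrow(5)\Leftrightarrow(7)$; in particular $(7)$ yields each of $(1)$, $(3)$, $(5)$. Next, since every S-separable space is M-separable, the downward implications $(1)\Rightarrow(2)$, $(3)\Rightarrow(4)$, and $(5)\Rightarrow(6)$ come for free.

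To close each loop I would invoke \cite[Proposition 25]{bella09}, which guarantees that M-separability of any one of $C_p(X,\mathbb{Q})$, $C_p(X,\mathbb{Z})$, or $C_p(X,2)$ already forces $iw(X)=\omega$ together with Mengerness of every finite power of $X$; this delivers $(2)\Rightarrow(7)$, $(4)\Rightarrow(7)$, and $(6)\Rightarrow(7)$. Concatenating these with the implications $(7)\Rightarrow(1)$, $(7)\Rightarrow(3)$, $(7)\Rightarrow(5)$ from Theorem~\ref{thm8} closes the three cycles $(7)\Rightarrow(1)\Rightarrow(2)\Rightarrow(7)$, $(7)\Rightarrow(3)\Rightarrow(4)\Rightarrow(7)$, and $(7)\Rightarrow(5)\Rightarrow(6)\Rightarrow(7)$, whence all seven assertions are equivalent.

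The argument is essentially bookkeeping once Theorem~\ref{thm8} is available, so I do not anticipate a serious obstacle. The only point requiring care is the precise scope of \cite[Proposition 25]{bella09}: I am relying on it applying uniformly to all three codomains $\mathbb{Q}$, $\mathbb{Z}$, and $2$ (the proof of Theorem~\ref{thm8} already invoked it for the codomain $2$). Should the cited result be stated only for $C_p(X,2)$, one would instead route $(2)\Rightarrow(7)$ and $(4)\Rightarrow(7)$ through $(6)$, by pushing $C_p(X,\mathbb{Q})$ and $C_p(X,\mathbb{Z})$ forward along the open continuous surjections $\Phi$ and $\Psi$ built in the proof of Theorem~\ref{thm8} and using that M-separability, like S-separability (cf. Proposition~\ref{prop401}), is preserved under open continuous images.
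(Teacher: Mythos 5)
Your proposal is correct and takes essentially the same route as the paper: the paper obtains Corollary~\ref{cor9} precisely by combining Theorem~\ref{thm8} with \cite[Proposition 25]{bella09} and the trivial implication that S-separability implies M-separability, which is exactly the cycle-closing bookkeeping you spell out.
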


\begin{Cor}
\label{cor7}
Let $(X, \tau)$ be condensed onto a separable metrizable space $(X, \tau^\prime)$. If every finite power of $(X, \tau^\prime)$ is Menger, then there exists a dense S-separable subspace of $C_p(X, \tau^\prime)$.
\end{Cor}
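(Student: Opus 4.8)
The plan is to reduce the statement to Theorem~\ref{thm7} and then carry S-separability across the condensation. First I would note that a separable metrizable space is second countable, so $w((X,\tau'))=\omega$, whence $iw((X,\tau'))\le w((X,\tau'))=\omega$ and therefore $iw((X,\tau'))=\omega$. Combined with the hypothesis that every finite power of $(X,\tau')$ is Menger, this is precisely condition $(5)$ of Theorem~\ref{thm7} for the space $(X,\tau')$; hence by $(5)\Rightarrow(3)$ the space $C_p(X,\tau')$ is S-separable. If one reads the conclusion literally, this already suffices, since an S-separable space is its own dense S-separable subspace (and, being separable, it also has a countable dense S-separable subspace by Proposition~\ref{prop301}).

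The substantive content, however, is to realize $C_p(X,\tau')$ as a dense subspace of the finer function space $C_p(X,\tau)$, exactly as in the condensation argument of Remark~\ref{rem101}. Since the condensation is a continuous bijection $(X,\tau)\to(X,\tau')$ we have $\tau'\subseteq\tau$, so $C(X,\tau')\subseteq C(X,\tau)$ as subsets of $\mathbb{R}^X$; because both inherit the product (pointwise-convergence) topology from $\mathbb{R}^X$, the subspace of $C_p(X,\tau)$ carried by $C(X,\tau')$ is literally $C_p(X,\tau')$. For density I would take a basic open set $B(f,F,\epsilon)$ of $C_p(X,\tau)$, with $f\in C(X,\tau)$ and $F=\{x_1,\dots,x_n\}\subseteq X$ finite, and produce $g\in C(X,\tau')$ meeting it: as $(X,\tau')$ is metrizable the finite set $F$ is closed and discrete in $(X,\tau')$, so the Tietze extension theorem supplies $g\in C(X,\tau')$ with $g(x_i)=f(x_i)$ for each $i$, giving $g\in B(f,F,\epsilon)\cap C(X,\tau')$.

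Putting the two steps together, $C_p(X,\tau')$ is an S-separable subspace that is dense in $C_p(X,\tau)$, which yields the desired dense S-separable subspace. I expect the only non-routine point to be the density verification, namely that functions continuous for the coarser topology $\tau'$ already approximate, on any prescribed finite set, every function continuous for $\tau$; this hinges on the metrizability (hence normality) of $(X,\tau')$ to interpolate finitely many prescribed values. The remaining ingredients are a direct invocation of Theorem~\ref{thm7} and the elementary estimate $iw=\omega$ for separable metrizable spaces.
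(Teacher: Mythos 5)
Your proposal is correct and takes essentially the same route as the paper: both establish that $C_p(X,\tau')$ is S-separable from the hypotheses (you via Theorem~\ref{thm7}(5)$\Rightarrow$(3), the paper via Theorem~\ref{thm4} together with hereditary separability and Proposition~\ref{prop1} --- which is exactly how Theorem~\ref{thm7} is proved anyway) and then observe that $C_p(X,\tau')$ sits densely in $C_p(X,\tau)$. You also correctly diagnose that the substance of the statement is density in the finer space $C_p(X,\tau)$ (the conclusion as printed, referring to $C_p(X,\tau')$, is a typo), and your Tietze-type verification supplies the detail behind the paper's ``clearly $C_p(X,\tau')$ is dense in $C_p(X,\tau)$''.
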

\begin{proof}
We can think of $C_p(X, \tau^\prime)$ as subspace of $C_p(X, \tau)$. Clearly $C_p(X, \tau^\prime)$ is dense in $C_p(X, \tau)$. By Theorem~\ref{thm4}, $C_p(X, \tau^\prime)$ satisfies $(*_1)$ with respect to dense subspaces. Since $(X, \tau^\prime)$ is separable metrizable, $C_p(X, \tau^\prime)$ is hereditarily separable. It follows that $C_p(X, \tau^\prime)$ is S-separable (see Proposition~\ref{prop1}).
\end{proof}

In the following example we observe that separability plays a crucial role in the validity of the above result.
\begin{Ex}
\label{ex1}
There exists a metrizable space $X$ such that $C_p(X)$ contains a dense S-separable subspace but $C_p(X)$ is not S-separable.
\end{Ex}
\begin{proof}
Let $D(\mathfrak{c})$ be the discrete space of cardinality $\mathfrak{c}$. Now $C_p(D(\mathfrak{c})) = \mathbb{R}^\mathfrak{c}$. We interpret $\mathbb{R}^\mathfrak{c}$ as $\mathbb{R}^{2^\omega}$. Let $Y = C_p(2^\omega)$.
Then $Y$ is dense in $\mathbb{R}^{2^\omega}$. Note that $iw(2^\omega) = \omega$ and $2^\omega$ is compact, i.e. every finite power of $2^\omega$ is Menger. By Theorem~\ref{thm7}, $Y$ is S-separable. Thus $Y$ is a dense S-separable subspace of $C_p(D(\mathfrak{c})) = \mathbb{R}^\mathfrak{c}$. Again by Theorem~\ref{thm7}, $C_p(D(\mathfrak{c})) = \mathbb{R}^\mathfrak{c}$ is not S-separable as $D(\mathfrak{c})$ is not Menger.
\end{proof}

It is natural to ask the following question.
\begin{Prob}
\label{prob1}
If $C_p(X)$ contains a dense S-separable subspace, then can $X$ be condensed onto a second countable space every finite power of which is Menger?
\end{Prob}

\begin{Th}[{\cite[Theorem 46]{bella09}}]
\label{thm10}
The space $2^\mathfrak{c}$ contains a countable dense H-separable subspace.
\end{Th}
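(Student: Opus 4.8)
This statement is quoted from \cite[Theorem 46]{bella09}, so a bare citation already discharges it; nonetheless the plan below indicates how one can reprove it with the machinery developed in this paper. The guiding idea is to realise $2^{\mathfrak c}$ as a power indexed by a compact metrizable space and to locate the desired subspace inside a space of continuous $2$-valued functions. Concretely, I would take $X=2^\omega$ and work inside $2^{2^\omega}=\prod_{t\in 2^\omega}2$. Since $|2^\omega|=\mathfrak c$ and each factor is finite, $w(2^{2^\omega})=\mathfrak c$, so $2^{2^\omega}$ is homeomorphic to $2^{\mathfrak c}$; it therefore suffices to produce a countable dense H-separable subspace of $2^{2^\omega}$.

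First I would check that $C_p(2^\omega,2)$, viewed as a subspace of $2^{2^\omega}$, is dense. A basic nonempty open set of $2^{2^\omega}$ fixes the values at finitely many distinct points $t_1,\dots,t_k\in 2^\omega$; using that $2^\omega$ is zero-dimensional and Hausdorff, one separates these points by pairwise disjoint clopen sets and writes down a clopen-determined (hence continuous) function $2^\omega\to 2$ realising the prescribed values, which witnesses density. The functions determined by finitely many coordinates, of which there are only countably many, already form a countable dense subspace $D$ of $C_p(2^\omega,2)$, and $D$ is then countable and dense in $2^{2^\omega}\cong 2^{\mathfrak c}$.

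It remains to see that $D$ is H-separable, and for this I would argue that $C_p(2^\omega,2)$ is H-separable and then pass to the dense subspace $D$. Because $2^\omega$ is compact, every finite power $(2^\omega)^n$ is compact and hence Hurewicz, while $iw(2^\omega)=\omega$; feeding this into the H-separability analogue of Theorem~\ref{thm8} (the version of \cite{bella09} in which the Menger property is replaced by the Hurewicz property) yields that $C_p(2^\omega,2)$ is H-separable. Finally, H-separability is inherited by dense subspaces by the argument of Proposition~\ref{prop301}, so $D$ is H-separable, completing the construction.

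The main obstacle is the last step, namely the $C_p$-characterisation of H-separability for $2$-valued function spaces. This is exactly the Hurewicz analogue of the selection argument carried out for the Menger/Scheepers case in the proof of Theorem~\ref{thm4}: one converts a neighbourhood of a finite set of functions into open covers of the finite powers $(2^\omega)^n$, applies the (here trivial, since compact) Hurewicz-type selection to obtain finite subfamilies that groupably cover, and reads the chosen functions back as the required finite sets $F_n$. A purely combinatorial alternative, placing $D$ via an independent family $\{A_\alpha:\alpha<\mathfrak c\}$ on $\mathbb N$ and choosing finite $F_n$ hitting each cell cofinitely, runs into the difficulty that there are $\mathfrak c$ many basic clopen sets to hit for all but finitely many $n$, which no naive diagonalisation over $\omega$ can arrange; the compactness of $2^\omega$ is precisely what tames these $\mathfrak c$ many constraints in the $C_p$ route.
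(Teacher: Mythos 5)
The paper offers no proof of this statement at all---it is imported verbatim as \cite[Theorem 46]{bella09}---so your opening remark is right that the citation alone discharges it, and the only comparison possible is with the cited source and with the paper's surrounding machinery. Your reconstruction is correct and is essentially the standard argument behind the cited theorem: realise $2^{\mathfrak c}$ as $2^{2^\omega}$, note that $C_p(2^\omega,2)$ is dense by zero-dimensionality (in fact $C_p(2^\omega,2)$ is itself countable, since continuous $2$-valued functions on $2^\omega$ are exactly characteristic functions of clopen sets and there are only countably many of these, so your subspace $D$ coincides with it), and obtain H-separability from compactness of all finite powers of $2^\omega$ via the Hurewicz analogue of Theorem~\ref{thm8}, finishing with the fact that dense subspaces inherit H-separability as in Proposition~\ref{prop301}. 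This is also precisely the technique the paper itself uses for $2^{\omega_1}$ in Theorem~\ref{thm11}, with Menger/S-separability in place of Hurewicz/H-separability, so your route is fully consistent with the paper's own methods.
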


\begin{Cor}
\label{cor8}
The space $2^\mathfrak{c}$ contains a countable dense S-separable subspace.
\end{Cor}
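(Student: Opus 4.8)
The plan is to derive this immediately from the preceding Theorem~\ref{thm10} together with the basic implication between the two selection principles. Recall that right after Definition~\ref{def1} it was observed that every H-separable space is S-separable; this is the only structural fact I need beyond the cited existence result.

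First I would invoke Theorem~\ref{thm10} to fix a countable subspace $Y$ of $2^\mathfrak{c}$ that is dense in $2^\mathfrak{c}$ and H-separable. Next I would apply the implication ``H-separable $\Rightarrow$ S-separable'' directly to $Y$: since $Y$ satisfies the (stronger) H-separability selection principle for every sequence of dense subspaces, the same witnessing sequences $(F_n)$ automatically satisfy the weaker S-separability requirement, because if every nonempty open set of $Y$ meets $F_n$ for all but finitely many $n$, then for any finite family $\mathcal{F}$ of nonempty open sets there is (by intersecting the finitely many cofinite index sets) some $n$ with $U\cap F_n\neq\emptyset$ for all $U\in\mathcal{F}$. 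Hence $Y$ is S-separable.

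There is essentially no obstacle here: the entire combinatorial content has been absorbed into Theorem~\ref{thm10}, and the remaining step is the trivial monotonicity of the selection principles noted in the text. I would therefore write the proof in a single short line, concluding that $Y$ is the desired countable dense S-separable subspace of $2^\mathfrak{c}$.

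\begin{proof}
By Theorem~\ref{thm10}, $2^\mathfrak{c}$ contains a countable dense H-separable subspace $Y$. Since every H-separable space is S-separable, $Y$ is S-separable. Thus $Y$ is a countable dense S-separable subspace of $2^\mathfrak{c}$.
\end{proof}
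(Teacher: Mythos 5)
Your proof is correct and is exactly the argument the paper intends: Corollary~\ref{cor8} is stated without proof precisely because it follows from Theorem~\ref{thm10} combined with the implication ``H-separable $\Rightarrow$ S-separable'' noted after Definition~\ref{def1}. Your justification of that implication (intersecting finitely many cofinite index sets) is also the right reason it holds.
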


Thus in ZFC, $2^\mathfrak{c}$ has a countable dense S-separable subspace. Under the assumption $\mach$, every countable subspace of $2^{\omega_1}$ is S-separable (see Corollary~\ref{cor11}). It is natural to ask the following question.

\begin{Prob}
\label{prob2}
In ZFC, does $2^{\omega_1}$ contain a countable dense S-separable subspace?
\end{Prob}

We now give an affirmative answer to the above problem.
\begin{Th}
\label{thm11}
The space $2^{\omega_1}$ contains a dense S-separable subspace.
\end{Th}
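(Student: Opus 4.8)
The plan is to realize the desired dense subspace as a function space $C_p(X,2)$ and to read off its S-separability from Theorem~\ref{thm8}. First I would observe that if $X$ is any zero-dimensional space with $|X|=\omega_1$, then $C_p(X,2)$ is a \emph{dense} subspace of $2^X$: given finitely many points $x_1,\dots,x_k\in X$ and values $\epsilon_1,\dots,\epsilon_k\in 2$, zero-dimensionality separates the $x_i$ by pairwise disjoint clopen sets $U_i$, and the function equal to $1$ on $\bigcup_{\epsilon_i=1}U_i$ and $0$ elsewhere is a continuous $2$-valued function meeting the corresponding basic open set. Since the product topology on $2^X$ depends only on $|X|$, we have $2^X\cong 2^{\omega_1}$. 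Hence it suffices to produce a zero-dimensional $X$ with $|X|=\omega_1$, $iw(X)=\omega$, and every finite power of $X$ Menger; Theorem~\ref{thm8} then makes $C_p(X,2)$ the required dense S-separable subspace.

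I would next reduce everything to finding a subspace $X\subseteq 2^\omega$ of size $\omega_1$ all of whose finite powers are Menger. Any such $X$ is second countable, so $iw(X)=\omega$ is automatic and $X$ is zero-dimensional, and the reduction above applies. When $\omega_1<\mathfrak{d}$ this is immediate: take any $X\subseteq 2^\omega$ with $|X|=\omega_1$; each finite power $X^k$ embeds in $2^\omega$ and has size $\omega_1<\mathfrak{d}$, so every continuous image of $X^k$ in $\omega^\omega$ is non-dominating and $X^k$ is Menger. (This case also follows from Corollary~\ref{cor1} applied to a countable dense subset, but the $C_p$ formulation unifies the two cases.)

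The main obstacle is the case $\mathfrak{d}=\omega_1$, in which a size-$\omega_1$ subset of $2^\omega$ may be dominating and hence fail to be Menger already in the first power. Here $\omega_1\le\mathfrak{b}\le\mathfrak{d}=\omega_1$ forces $\mathfrak{b}=\mathfrak{d}=\omega_1$, so there is a scale $\langle f_\alpha:\alpha<\omega_1\rangle$, a $\le^*$-increasing dominating family of strictly increasing elements of $\omega^\omega$. The plan is to build $X$ from this scale \emph{inside} $2^\omega$ rather than in $\omega^\omega$, identifying each $f_\alpha$ with the characteristic function of its range in $[\omega]^\omega\subseteq 2^\omega$, so that $X$ becomes a size-$\omega_1$ subset of the compact metric space $2^\omega$ whose combinatorics are governed by the linear $\le^*$-order of the scale. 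Moving to $2^\omega$ is the key device, since unboundedness in $\omega^\omega$ no longer forces a dominating continuous image. Using the $\le^*$-linearity one then shows that $X$, and simultaneously every finite power $X^k$, satisfies the Menger selection for $\omega$-covers, i.e.\ $S_{fin}(\Omega,\Omega)$. Verifying that \emph{all} finite powers are Menger is the crux: Mengerness is not finitely productive in general, so the property must be extracted from the linear order and unboundedness of the scale rather than from cardinality, and this step is exactly the known ZFC construction of an all-finite-powers-Menger (indeed Hurewicz) set of reals of size $\mathfrak{d}$.

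Finally, with such an $X$ in hand, $C_p(X,2)$ is S-separable by Theorem~\ref{thm8} and dense in $2^X\cong 2^{\omega_1}$, which proves the theorem. Restricting to a countable dense subset and applying Proposition~\ref{prop301} then also yields the countable dense S-separable subspace asked for in Problem~\ref{prob2}.
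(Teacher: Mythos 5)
Your proposal is correct and takes essentially the same route as the paper: the paper likewise splits into cases (at $\mathfrak{b}$ rather than your $\mathfrak{d}$), disposing of the easy case $\omega_1<\mathfrak{b}$ via Corollary~\ref{cor1}, and in the critical case $\omega_1=\mathfrak{b}$ citing the Bartoszy\'{n}ski--Tsaban ZFC theorem producing a zero-dimensional metrizable space of cardinality $\mathfrak{b}$ all of whose finite powers are Menger, then taking $C_p(X,2)$ as the dense S-separable subspace of $2^{\omega_1}$ by Theorem~\ref{thm8}. The only differences are cosmetic: you unify both cases under the $C_p(X,2)$ framework, verify the density of $C_p(X,2)$ in $2^X$ explicitly, and sketch the scale-based construction that the paper simply cites as \cite[Theorem 10]{bartos06}.
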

\begin{proof}
Assume that $\omega_1 < \mathfrak{b}$. Then by Corollary~\ref{cor1}, every countable subspace of $2^{\omega_1}$ is S-separable. Thus we can find a countable dense S-separable subspace of $2^{\omega_1}$.

Next assume that $\omega_1 = \mathfrak{b}$. From \cite[Theorem 10]{bartos06} we can say that there exists a zero-dimensional metrizable space $X$ with cardinality $\mathfrak{b}$ such that every finite power of $X$ is Menger.
Clearly $iw(X) = \omega$. By Theorem~\ref{thm8}, $C_p(X, 2)$ is a S-separable subspace of $2^{\omega_1}$. Also $C_p(X, 2)$ is dense in $2^{\omega_1}$.
\end{proof}

\begin{Cor}
\label{cor12}
The space $2^{\omega_1}$ contains a countable dense S-separable subspace.
\end{Cor}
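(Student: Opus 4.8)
The plan is to extract a \emph{countable} dense S-separable subspace from the (possibly uncountable) dense S-separable subspace furnished by Theorem~\ref{thm11}, exploiting the fact that S-separability already forces separability.

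First I would record the elementary observation that every S-separable space is separable. Indeed, S-separability implies M-separability, and applying M-separability to the constant sequence $Y_n = X$ (each a dense subspace) yields finite sets $F_n \subseteq X$ whose union $\bigcup_{n\in\mathbb{N}} F_n$ is dense in $X$; this union is a countable dense set. Hence any S-separable space, in particular the witness from Theorem~\ref{thm11}, possesses a countable dense subspace.

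Next, by Theorem~\ref{thm11} there is a dense S-separable subspace $D$ of $2^{\omega_1}$. By the previous observation $D$ is separable, so I fix a countable dense subspace $C$ of $D$. Since density is transitive, the fact that $C$ is dense in $D$ and $D$ is dense in $2^{\omega_1}$ gives that $C$ is dense in $2^{\omega_1}$: one has $D \subseteq \overline{C}$ and hence $2^{\omega_1} = \overline{D} \subseteq \overline{C}$. Moreover $C$ is a dense subspace of the S-separable space $D$, so by Proposition~\ref{prop301} it is itself S-separable. Therefore $C$ is a countable dense S-separable subspace of $2^{\omega_1}$, which is exactly the assertion.

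The argument is short, and no genuine obstacle arises once Theorem~\ref{thm11} is in hand, since the substantive construction was carried out there. The only mild points to check are the reduction ``S-separable $\Rightarrow$ separable'' and the transitivity of density used to pass from $C$ dense in $D$ to $C$ dense in the ambient cube; both are immediate.
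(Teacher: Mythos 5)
Your proof is correct and follows exactly the same route as the paper's: invoke Theorem~\ref{thm11} to get a dense S-separable subspace, extract a countable dense subspace from it (using that S-separability implies separability), and conclude via Proposition~\ref{prop301} together with transitivity of density. The paper's version merely leaves these verifications implicit, so your write-up is just a more detailed rendering of the same argument.
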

\begin{proof}
By Theorem~\ref{thm11}, we get a dense S-separable subspace $Y$ of $2^{\omega_1}$. Let $Z$ be a countable dense subspace of $Y$. Then $Z$ is S-separable. Clearly $Z$ is dense in $2^{\omega_1}$.
\end{proof}

\subsection{L-separability}
\begin{Def}
\label{def2}
A space $X$ is said to be L-separable if for every dense subset $D$ of $X$ there exists a countable subset $C$ of $D$ such that $C$ is dense in $X$ (or equivalently, if every dense subspace of $X$ is separable).
\end{Def}

Verify that a space $X$ is L-separable if and only if $\delta(X) = \omega$. It is clear that every M-separable space is L-separable. However, the converse does not hold in general. Indeed, there exist L-separable spaces which are not M-separable. For instance, since every countable space is L-separable, any countable space that fails to be M-separable provides a counterexample. Such examples exist in ZFC (see Theorem~\ref{thm2}). Therefore, M-separability is a strictly stronger property than L-separability. However, since a space $X$ is L-separable if and only if $\delta(X) = \omega$, every L-separable space with $\pi$-weight less than $\mathfrak{d}$ is S-separable (and hence M-separable) (see Theorem~\ref{thm1}).

Also note that L-separability implies separability. We now observe that the converse is not true.

\begin{Ex}
\label{ex4}
There exists a Tychonoff separable space which is not L-separable.
\end{Ex}
\begin{proof}
Let $I$ be an index set with $|I|=\mathfrak{c}$, and consider the product $X=[0,1]^I$ with the product topology. Then $X$ is separable as the product of at most continuum many separable spaces is separable. Let
\[\Sigma(X) =\{x\in X:\operatorname{supp}(x)=\{i\in I: x(i)\neq 0\}\text{ is countable}\}.\]

We first show that $\Sigma(X)$ is dense in $X$. Let $U$ be a basic open cylinder determined by finitely many coordinates $F\subseteq I$. One can choose $y\in \Sigma(X)$ that matches the prescribed values on $F$ and is $0$ elsewhere, so $y\in U\cap \Sigma(X)$.

We claim that $\Sigma(X)$ is not separable. Indeed, fix any countable $A\subseteq \Sigma(X)$ and set $J = \bigcup_{a\in A} \operatorname{supp}(a)$, which is countable. Choose $j^\prime\in I\setminus J$ and a non-degenerate open interval $V\subseteq (0,1]$. Consider the cylinder $U=\{x\in X: x(j^\prime)\in V\}$. Then $W = U\cap \Sigma(X)$ is a nonempty open subset of the subspace $\Sigma(X)$. For every $a\in A$ we have $a(j^\prime)=0$, hence $A\cap U = \emptyset$. If $x\in W$, then the basic neighborhood of $x$ that restricts only the $j^\prime$-coordinate to lie in $V$ is disjoint from $A$; therefore $x\notin \overline{A}^{X}$, and so $x\notin \overline{A}^{\Sigma(X)} = \overline{A}^{X}\cap \Sigma(X)$. Thus $W\cap \overline{A}^{\Sigma(X)} = \emptyset$, showing that no countable $A\subseteq \Sigma(X)$ is dense in $\Sigma(X)$. Hence $\Sigma(X)$ is a dense non-separable subspace of the separable space $X$.

Since $X$ has a dense non-separable subspace $\Sigma(X)$, it follows that $X$ is not $L$-separable.
\end{proof}

Let $N(X) = (X\times\{0\})\cup (\mathbb{R}\times (0,\infty))$ be the Niemytzki plane on a set $X\subseteq \mathbb{R}$. The topology on $N(X)$ is defined as follows. $\mathbb{R}\times(0,\infty)$ has the Euclidean topology and the set $X\times\{0\}$ has the topology generated by all sets of the form $\{(x,0)\}\cup U$, where $x\in X$ and $U$ is an open disc in $\mathbb{R}\times (0,\infty)$ which is tangent to $X\times \{0\}$ at the point $(x,0)$. The topology on $N(X)$ is also called Niemytzki's tangent disk topology. It is to be noted that Niemytzki originally defined $N(\mathbb{R})$ (see \cite{Niemytzki}).

Recall that a space $X$ is called hereditarily separable if every subspace of $X$ is separable. Clearly if a space $X$ is hereditarily separable, then $X$ is $L$-separable. Using the following lemma we will show in Example~\ref{ex5} that the converse fails in general.

\begin{Lemma}
\label{lemma2}
If $X$ has an open dense subspace $Y$ that is second countable, then $X$ is S-separable.
\end{Lemma}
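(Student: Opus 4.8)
The plan is to reduce this to two results already established: Theorem~\ref{thm1} and Proposition~\ref{prop5}. The strategy is to first show that the second countable open dense subspace $Y$ is itself S-separable, and then invoke the fact that an open dense S-separable subspace transfers S-separability to the whole space.

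First I would verify that $Y$ satisfies the hypotheses of Theorem~\ref{thm1}. Since $Y$ is second countable, it admits a countable base $\mathcal{B}$, which is in particular a countable $\pi$-base, so $\pi w(Y) = \omega$ (assuming $Y$ is infinite; the finite case is trivial). In particular $\pi w(Y) < \mathfrak{d}$, since $\mathfrak{d} \geq \omega_1$. Next I would observe that a second countable space is hereditarily separable: every subspace of $Y$ inherits a countable base and is therefore separable. Consequently every dense subspace of $Y$ is separable, which gives $\delta(Y) = \sup\{d(Z) : Z \text{ is dense in } Y\} = \omega$.

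With $\delta(Y) = \omega$ and $\pi w(Y) = \omega < \mathfrak{d}$ in hand, Theorem~\ref{thm1} applies directly and yields that $Y$ is S-separable. Finally, since $Y$ is by hypothesis an open dense subspace of $X$ and we have just shown it to be S-separable, Proposition~\ref{prop5} immediately gives that $X$ is S-separable, completing the argument.

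The argument is essentially an assembly of prior results, so no step presents a genuine obstacle; the only point requiring a moment's care is the passage from second countability to $\delta(Y) = \omega$, which rests on the standard fact that second countable spaces are hereditarily separable rather than merely separable. Once that is noted, both Theorem~\ref{thm1} and Proposition~\ref{prop5} apply mechanically.
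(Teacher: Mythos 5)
Your proof is correct, and there is no circularity: both Theorem~\ref{thm1} and Proposition~\ref{prop5} are established in the paper before this lemma, and your verification of their hypotheses is sound (indeed, $\delta(Y)\leq \pi w(Y)\leq w(Y)=\omega$ follows directly from the inequality chain stated in the preliminaries, so you could even bypass the hereditary-separability remark). However, your route is genuinely different from the paper's. The paper gives a direct, self-contained construction: enumerate a countable base $\{B_n : n\in\mathbb{N}\}$ of $Y$, choose each finite $F_n\subseteq Y_n\cap Y$ to meet $B_1,\dotsc,B_n$, and then for a finite family $\mathcal{F}$ of open sets of $X$ pick $B_{i(U)}\subseteq U\cap Y$ for each $U\in\mathcal{F}$ and take $n_0=\max\{i(U): U\in\mathcal{F}\}$. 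That argument is purely combinatorial in ZFC and avoids the dominating-number machinery of Theorem~\ref{thm1} entirely; moreover, it actually proves something stronger than stated, namely that $X$ is H-separable, since each open $U$ meets $F_n$ for \emph{all} $n\geq i(U)$, not just for one $n$ (a strengthening your approach cannot yield, as Theorem~\ref{thm1} cannot be upgraded to H-separability in view of Example~\ref{ex3}). What your approach buys is brevity and modularity: given the prior results, it is an assembly with no new construction. What the paper's approach buys is elementarity and the latent stronger conclusion.
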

\begin{proof}
Let $(Y_n)$ be a sequence of dense subspaces of $X$. Since $Y$ is open and dense in $X$, each $Y_n\cap Y$ is dense in $Y$. Fix a countable base $\mathcal{B} = \{B_n : n\in\mathbb{N}\}$ for the subspace $Y$.

For each $n\in \mathbb{N}$ choose a finite set $F_n\subseteq Y_n\cap Y$ with $F_n\cap B_i\neq \emptyset$ for all $i=1, 2, \dotsc, n$ (possible since $Y_n\cap Y$ is dense in $Y$). We claim that $(F_n)$ witnesses S-separability. Let $\mathcal{F}$ be any finite family of nonempty open subsets of $X$. Since $Y$ is dense and open, $U\cap Y\neq \emptyset$ for each $U\in\mathcal{F}$, and $U\cap Y$ is open in $Y$. For each $U\in \mathcal{F}$ pick $B_{i(U)}\in \mathcal{B}$ with $B_{i(U)}\subseteq U\cap Y$. Let $n_0 = \max\{i(U) : U\in \mathcal{F}\}$. By construction, $F_{n_0}$ meets every $B_{i(U)}$, hence $F_{n_0}\cap U\neq \emptyset$ for all $U\in \mathcal{F}$. Thus $X$ is S-separable.
\end{proof}

\begin{Ex}
\label{ex5}
There exists a Tychonoff S-separable (hence L-separable) space which is not hereditarily separable.
\end{Ex}
\begin{proof}
Consider the Niemytzki plane $N(\mathbb{R})$ on $\mathbb{R}$. Choose $H = \mathbb{R}\times (0,\infty)$ and $L = \mathbb{R}\times \{0\}$. $N(\mathbb{R})$ is not hereditarily separable because $L$, being an uncountable discrete subspace of it, is not separable. On the other hand, $H$ is open and dense in $N(\mathbb{R})$, and the subspace topology on $H$ is the usual Euclidean topology, so $H$ is second countable. By Lemma~\ref{lemma2}, it follows that $N(\mathbb{R})$ is S-separable.
\end{proof}

For separable metrizable spaces, separability, hereditary separability, and L-separability coincide.

However, hereditary separability does not imply M-separability (see Theorem~\ref{thm2}). Also M-separability does not imply hereditary separability as the Niemytzki plane $N(\mathbb{R})$ on $\mathbb{R}$, being a separable Fr\'{e}chet space, is M-separable (see \cite[Theorem 2.9]{barman11}) but not hereditary separability (see Example~\ref{ex5}).

\end{document}